\newtheorem{theorem}{Theorem}[section]
\newtheorem{cor}[theorem]{Corollary}
\newtheorem{conj}[theorem]{Conjecture}
\newtheorem{prop}[theorem]{Proposition}
\newtheorem{lemma}[theorem]{Lemma}
\newenvironment{manuallemma}[1]{%
  \manuallemmainner
}{\endmanuallemmainner}
\newtheorem{remark}[theorem]{Remark}
\newcommand{\ba}{\begin{eqnarray*}}
\newcommand{\ea}{\end{eqnarray*}}
\newcommand{\bq}{\begin{eqnarray}}
\newcommand{\eq}{\end{eqnarray}}
\begin{document}

\title{Genetic composition of an exponentially growing cell population}

\author{David Cheek\footnote{Program for Evolutionary Dynamics, Harvard University, dmcheek7@gmail.com}~  and Tibor Antal\footnote{School of Mathematics, University of Edinburgh, tibor.antal@ed.ac.uk}}

\date{}



\maketitle

\begin{abstract}
We study a simple model of DNA evolution in a growing population of cells. Each cell contains a nucleotide sequence which randomly mutates at cell division. Cells divide according to a branching process. Following typical parameter values in bacteria and cancer cell populations, we take the mutation rate to zero and the final number of cells to infinity. We prove that almost every site (entry of the nucleotide sequence) is mutated in only a finite number of cells, and these numbers are independent across sites. However independence breaks down for the rare sites which are mutated in a positive fraction of the population. The model is free from the popular but disputed infinite sites assumption. Violations of the infinite sites assumption are widespread while their impact on mutation frequencies is negligible at the scale of population fractions. Some results are generalised to allow for cell death, selection, and site-specific mutation rates. For illustration we estimate mutation rates in a lung adenocarcinoma.

\end{abstract}
\pagebreak
%
%

\tableofcontents

\section{Introduction}\label{introduction}
A population of dividing cells with a mutating DNA sequence is ubiquitous in biology. We study a simple model of this process. Starting with one cell, cells divide and die according to a supercritical branching process. As for DNA, we loosely follow classic models from phylogenetics \cite{jcm,tavarep}. Each cell contains a sequence of the nucleotides A, C, G, and T, and each site (entry of the sequence) can mutate independently at cell division. We are interested in the sequence distribution when the population reaches many cells. 

Let's discuss a specific motivation.  In recent years, cancer genetic data has been made available in great quantities. One especially common type of data consists of mutation frequencies in individual tumours. These data take the form of a vector $(x_i)_{i\in\mathcal{S}}$, where $i\in\mathcal{S}$ denotes genetic sites and $x_i$ is the frequency of cells which are mutated at site $i$. To make sense of such data in terms of tumour evolution, simple mathematical models can be helpful.

Some important works on the topic are \cite{gs,ib,matds,tavarerecent}. They consider branching process and deterministic models of tumour evolution. They compare theory with data, estimating evolutionary parameters such as mutation rates. A central feature of their theory, and of countless other works, is the so-called infinite sites assumption (ISA). The ISA states that no genetic site can mutate more than once in a tumour's lifetime. The assumption's simplicity drives its popularity. However recent statistical analysis of single cell sequencing data \cite{isr} shows ``widespread violations of the ISA in human cancers".

For a `non-ISA' model of a growing population of cells, there is in fact a famous example. Luria and Delbr{\"u}ck \cite{ld} modelled recurrent mutations in an exponentially growing bacterial population. Subsequent works \cite{lc,dk,hy,ka,kl} (and others) adapted Luria and Delbr{\"u}ck's model to branching processes and calculated mutation frequencies. These works describe only two genetic states, mutated or not mutated, effectively restricting attention to a single genetic site. In \cite{ca} we offered an account of one such model, proving limit theorems for mutation times, clone sizes, and mutation frequencies. We then briefly studied an extension to a sequence of genetic sites. Now we offer a self-contained sequel to \cite{ca}, slightly adapting the model, and aiming for a deeper understanding of the sequence distribution.

In \cite{ca} we studied several parameter regimes. In the present work by contrast, we study only one parameter regime which is the most biologically relevant. We take the final number of cells to infinity and the mutation rate to zero with their product finite. This limit is relevant because a detected tumour has around $10^9$ cells while the mutation rate per site per cell division is around $10^{-9}$ \cite{jea}. This limit is also standard in Luria-Delbr{\"u}ck-type models of bacteria. 

Now we introduce our main results. The number of cells mutated at a given site (mutations are defined relative to the initial cell) converges to the Luria-Delbr{\"u}ck distribution. This recovers a well-known result of single site models \cite{lc,hy,kl,ka,ca}. So a site is mutated in only a finite number of cells, standing in contrast to the infinite total number of cells. Going beyond \cite{lc,hy,kl,ka,ca}, we also study the rare event that a site is mutated in a positive fraction of cells. We show that, when appropriately scaled, this fraction of cells follows a power-law distribution.

Across sites, mutation frequencies are asymptotically independent. The independence leads to a many-sites law of large numbers. Specifically, the site frequency spectrum (empirical measure of mutation frequencies) converges to a deterministic measure concentrated at finite cell numbers. At positive fractions of cells, away from the mass concentration, independence breaks down and the site frequency spectrum converges to a Cox process. These results go beyond \cite{durr,gs,ib,tavarerecent,ca} who only give the expected site frequency spectrum, so our work contributes an appreciation of randomness.

Our results are not all at the same level of generality. For sites mutated in a positive fraction of cells, results are proven for a zero death rate and homogeneous division and mutation rates. For sites mutated in a finite number of cells, results are proven for sequence-dependent death, division, and mutation rates.


We also assess the infinite sites assumption's validity. Our results say that for typical parameter values, the number of sites to violate the ISA is at least millions, or even billions, in a single tumour. Thus our work agrees with \cite{isr}'s statistical analysis of single cell sequencing data which says that ISA violations are widespread. It should be emphasised however that ISA violations do not neccessarily invalidate the ISA. One of our results says that ISA violations do not impact mutation frequencies viewed at the scale of population fractions. Bulk sequencing data, which is the majority of cancer genetic data \cite{tavarerecent}, views mutation frequencies at the scale of population fractions. Therefore our work endorses analyses of bulk sequencing data which are reliant on the ISA, such as \cite{gs,ib,matds,tavarerecent}.

Before commencing the paper, let's note that there are a wealth of other works on mutations in branching processes. Especially common are infinite alleles models, for example \cite{griffithspakes,lambertchampagnat,Olly,duchamps}, where each individual in the population has an allele which can mutate to alleles never before seen in the population. In an infinite alleles model, a mutation always deletes an individual's ancestral genetic information. In an infinite sites model on the other hand, a mutation never deletes ancestral genetic information; mutations simply accumulate. The DNA sequence model which we study sits between those extremes.

The paper is structured as follows. In Section \ref{sec:model}, we introduce the model in its simplest form. In Section \ref{prelim}, we give notation and preliminary ideas. In Section \ref{mutationfrequencies}, we present the paper's main results. In Section \ref{gmrcs}, we give generalisations and open questions. In Section \ref{proofsecfinite}, we prove results on sites mutated in a finite number of cells. In Section \ref{sec:mpfps}, we prove results on sites mutated in a positive fraction of cells. In Section \ref{infinitesitessection}, we discuss the infinite sites assumption's validity. In Section \ref{sec:data}, we consider data from a lung adenocarcinoma and estimate mutation rates.

\section{Model}\label{sec:model}
Here the model is stated in its simplest form. It comprises two parts.
\begin{enumerate}\item Population dynamics: Starting with one cell, cells divide according to the Yule process. That is, cells divide independently at constant rate.
\item
Genetic information: The set of nucleotides is $\mathcal{N}=\{A,C,G,T\}$. The set of genetic sites is some finite set $\mathcal{S}$. The set of genomes (or DNA sequences) is $\mathcal{G}=\mathcal{N}^\mathcal{S}$. Each cell has a genome, i.e. is assigned an element of $\mathcal{G}$. Suppose that a cell with genome $(v_i)_{i\in\mathcal{S}}\in\mathcal{G}$ divides to give daughter cells with genomes $(V_i^{(1)})_{i\in\mathcal{S}}$ and $(V_i^{(2)})_{i\in\mathcal{S}}$. Conditional on $(v_i)$, the $V_i^{(r)}$ are independent over $i\in\mathcal{S}$ and $r\in\{1,2\}$, and
$$\mathbb{P}\left[V_i^{(r)}=\psi|(v_i)\right]=\begin{cases}\mu/3,\quad &\psi\not=v_i;\\1-\mu,\quad& \psi=v_i.
\end{cases}
$$
It is also assumed that mutations occur independently for different cell divisions.
\end{enumerate}

The model is generalised to cell death, selection, and nucleotide/site-specific mutation rates in Section \ref{gmrcs}. 
\section{Preliminaries}\label{prelim}
\subsection{Luria-Delbr{\"u}ck distribution}
Let $(Y_k)_{k\in\mathbb{N}}$ be an i.i.d. sequence of random variables with 
\[
\mathbb{P}[Y_1=j]=\frac{1}{j(j+1)}
\]
for $j\in\mathbb{N}$. Let $K$ be an independent Poisson random variable with mean $c$. The \textit{Luria-Delbr{\"u}ck distribution} with parameter $c$ is defined as the distribution of
\bq\label{ldpr}
B=\sum_{k=1}^KY_k.
\eq
It is commonly seen in its generating function form (e.g. \cite{lc,qz})
\bq\label{ldgfr}
\mathbb{E}z^B=(1-z)^{c(z^{-1}-1)}.
\eq
The connection between (\ref{ldpr}) and (\ref{ldgfr}) is made explicit in \cite{ca} for example.
Although the distribution is named after Luria and Delbr{\"u}ck (due to their groundbreaking work \cite{ld}), it was derived by Lea and Coulson \cite{lc}. See \cite{qz} for a historical review.

The Luria-Delbr{\"u}ck distribution's power-law tail was derived in \cite{pakestail}.
\begin{lemma}\label{lddplt}$\lim_{m\rightarrow\infty}m\mathbb{P}\left[B\geq m\right]=c.$
\end{lemma}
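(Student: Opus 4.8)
The plan is to exploit the fact that each summand $Y_k$ has an exactly Pareto-like (regularly varying of index one) tail, and to make rigorous the single-big-jump heuristic for heavy-tailed sums: $B$ is large essentially because one of the $Y_k$ is large, so one expects $\mathbb{P}[B\ge m]\sim\mathbb{E}[K]\,\mathbb{P}[Y_1\ge m]$. The first step is the exact single-summand tail. Telescoping the defining series gives $\mathbb{P}[Y_1\ge m]=\sum_{j\ge m}\bigl(\tfrac1j-\tfrac1{j+1}\bigr)=\tfrac1m$, so $m\,\mathbb{P}[Y_1\ge m]=1$ for every $m\in\mathbb{N}$.

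Writing $S_n=Y_1+\dots+Y_n$ and $S_0=0$, the second step is to show $m\,\mathbb{P}[S_n\ge m]\to n$ as $m\to\infty$ for each fixed $n$. The engine is a convolution lemma: if $U,V\ge 0$ are independent with $m\,\mathbb{P}[U\ge m]\to a$ and $m\,\mathbb{P}[V\ge m]\to b$, then $m\,\mathbb{P}[U+V\ge m]\to a+b$. For the lower bound I would use the inclusion $\{U\ge m\}\cup\{V\ge m\}\subseteq\{U+V\ge m\}$ together with inclusion--exclusion, noting that $\mathbb{P}[U\ge m]\,\mathbb{P}[V\ge m]=O(m^{-2})$ is negligible after multiplying by $m$. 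For the upper bound I would fix $\epsilon\in(0,\tfrac12)$ and use $\{U+V\ge m\}\subseteq\{U\ge(1-\epsilon)m\}\cup\{V\ge(1-\epsilon)m\}\cup(\{U\ge\epsilon m\}\cap\{V\ge\epsilon m\})$; multiplying by $m$ yields $\limsup_m m\,\mathbb{P}[U+V\ge m]\le (a+b)/(1-\epsilon)$, and letting $\epsilon\downarrow 0$ closes the gap. Applying this inductively with $U=S_{n-1}$, $V=Y_n$, starting from the exact case $n=1$, gives $m\,\mathbb{P}[S_n\ge m]\to n$.

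The third step is to interchange the limit with the Poisson sum. Conditioning on $K$,
\[
m\,\mathbb{P}[B\ge m]=\sum_{n\ge 1}e^{-c}\frac{c^n}{n!}\,m\,\mathbb{P}[S_n\ge m].
\]
To justify dominated convergence I need a summable bound uniform in $m$. The union bound $\{S_n\ge m\}\subseteq\bigcup_{i=1}^n\{Y_i\ge m/n\}$ gives $\mathbb{P}[S_n\ge m]\le n\,\mathbb{P}[Y_1\ge m/n]\le n^2/m$ whenever $m\ge n$, while trivially $m\,\mathbb{P}[S_n\ge m]\le m\le n$ when $m<n$; in either case $m\,\mathbb{P}[S_n\ge m]\le n^2$. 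Since $\sum_{n\ge 1}e^{-c}\tfrac{c^n}{n!}n^2=\mathbb{E}[K^2]<\infty$, dominated convergence applies, and the termwise limit produces $\sum_{n\ge 1}e^{-c}\tfrac{c^n}{n!}\,n=\mathbb{E}[K]=c$.

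I expect the main obstacle to be the upper bound in the convolution lemma; the tail computation, the lower bound, and the dominated-convergence step are all immediate once the uniform bound is in hand. The delicate point is that one must confirm the ``both variables moderately large'' event $\{U\ge\epsilon m\}\cap\{V\ge\epsilon m\}$ is genuinely negligible at the $1/m$ scale. The exact Pareto tail makes this transparent through the $O(m^{-2})$ estimate, but the same care would be required for any regularly varying tail of index one.
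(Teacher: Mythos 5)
Your proposal is correct, but there is nothing in the paper to compare it against line by line: the paper gives no proof of Lemma \ref{lddplt}, which is simply quoted from Pakes \cite{pakestail}. Your argument therefore supplies something the paper leaves out, namely a self-contained, purely probabilistic proof from the compound-Poisson representation (\ref{ldpr}): the exact Pareto tail $m\,\mathbb{P}[Y_1\geq m]=1$, an induction via a convolution lemma showing $m\,\mathbb{P}[S_n\geq m]\rightarrow n$ (the one-big-jump principle for regularly varying tails of index one), and dominated convergence over the Poisson mixture using the uniform bound $m\,\mathbb{P}[S_n\geq m]\leq n^2$ together with $\mathbb{E}[K^2]<\infty$. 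All three steps check out: the inclusion behind your upper bound is valid (if $U+V\geq m$ with $U<(1-\epsilon)m$ and $V<(1-\epsilon)m$, then both $U$ and $V$ exceed $\epsilon m$); the cross term $m\,\mathbb{P}[U\geq\epsilon m]\,\mathbb{P}[V\geq\epsilon m]=O\left(1/(\epsilon^{2}m)\right)$ vanishes for fixed $\epsilon$; and the union bound with the exact tail gives $\mathbb{P}[S_n\geq m]\leq n^{2}/m$, which is precisely the Kesten-type domination one needs in general subexponential arguments, here available in trivial form because the Poisson variable $K$ has all moments finite. The only point you leave implicit is that your tail hypotheses are stated along integer $m$ but applied at the real arguments $(1-\epsilon)m$ and $\epsilon m$; since all the variables involved are integer-valued this is immediate from $x\,\mathbb{P}[U\geq x]=\frac{x}{\lceil x\rceil}\,\lceil x\rceil\,\mathbb{P}\left[U\geq\lceil x\rceil\right]$, but a sentence to that effect would tighten the write-up. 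What your route buys over the paper's citation is an elementary argument that makes the mechanism visible — the tail event $\{B\geq m\}$ is driven by a single large summand, whence the factor $\mathbb{E}[K]=c$ — whereas the reference handles the asymptotics by working with the distribution itself (cf.\ the generating function (\ref{ldgfr})) without needing this probabilistic structure.
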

\subsection{Yule tree}\label{sec:Yuletreesec}
The set of all cells to ever exist, following standard notation, is
\[
\mathcal{T}=\cup_{l=0}^\infty\{0,1\}^l.
\]
A partial ordering, $\prec$, is defined on $\mathcal{T}$. For $x,y\in\mathcal{T}$, $x\prec y$ means that cell $y$ is a descendant of cell $x$. That is, $x\prec y$ if
\begin{enumerate}
\item
there are $l_1,l_2\in\mathbb{N}_0$ with $l_1<l_2$ and $x\in\{0,1\}^{l_1},y\in\{0,1\}^{l_2}$; and
\item
the first $l_1$ entries of $y$ agree with the entries of $x$.
\end{enumerate}
Note that $\emptyset\in\mathcal{T}$ and that $\emptyset\prec x$ for any $x\in\mathcal{T}\backslash\{\emptyset\}$. So $\emptyset$ is the initial cell from which all other cells descend. For further notation, write $x\preceq y$ if $x\prec y$ or $x=y$. Also, write $x0$ and $x1$ for the daughters of $x\in\mathcal{T}$; precisely, if $x\in\mathcal{T}$ and $j\in\{0,1\}$, then $xj$ is the element of $\{0,1\}^{l+1}$ whose first $l$ entries are the entries of $x$ and whose last entry is $j$.

Let $(A_x)_{x\in\mathcal{T}}$ be a family of i.i.d. exponentially distributed random variables with mean $1$. $A_x$ is the lifetime of cell $x$. The cells alive at time $t$ are
\[
\mathcal{T}_t:=\left\{x\in\mathcal{T}:\sum_{y\prec x}A_y\leq t<\sum_{y\preceq x}A_y\right\}.
\]

The proportion of cells alive at time $t$ which are descendants of cell $x$ (including $x$) is
\[
P_{x,t}:=\frac{|\{y\in\mathcal{T}_t:x\preceq y\}|}{|\mathcal{T}_t|}.
\]
\begin{lemma}\label{btst}
For each $x\in\mathcal{T}$,
\[
\lim_{t\rightarrow\infty}P_{x,t}=P_x:=\prod_{\emptyset\prec y\preceq x}U_y
\]
almost surely, where 
\begin{enumerate}
\item
the $U_y$ are uniformly distributed on $[0,1]$;
\item
for any $y\in\mathcal{T}$, $U_{y0}+U_{y1}=1$;
\item
$(U_{y0})_{y\in\mathcal{T}}$ is an independent family.
\end{enumerate}
\end{lemma}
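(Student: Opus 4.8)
The plan is to realise each limiting proportion as a telescoping product of sibling subtree ratios, each governed by the classical martingale limit of a Yule process, and then to read off the distributional and independence claims from the beta--gamma algebra of exponential random variables.

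First I would fix a cell $x$ and write $Z_{x,t}:=|\{y\in\mathcal{T}_t:x\preceq y\}|$ for the number of descendants of $x$ (including $x$) alive at time $t$, so that $P_{x,t}=Z_{x,t}/Z_{\emptyset,t}$. Read from its birth time $\tau_x:=\sum_{y\prec x}A_y$, the subtree rooted at $x$ is itself a Yule process, being a function of the independent family $\{A_z:x\preceq z\}$; in particular distinct subtrees spawned at a division are independent, and each subtree is independent of its own birth time. The classical Yule martingale convergence then gives, almost surely, $e^{-(t-\tau_x)}Z_{x,t}\to W_x$ with $W_x$ exponential of mean $1$. Since a parent $p$ satisfies $Z_{p,t}=Z_{p0,t}+Z_{p1,t}$ once both daughters are born, normalising by $e^{-(t-\tau_{p0})}$ and letting $t\to\infty$ yields the key relation $W_p=e^{-A_p}(W_{p0}+W_{p1})$.

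Next I would telescope along the ancestral chain. Since $\mathrm{parent}(y)$ is well defined for $\emptyset\prec y$,
\[
P_{x,t}=\frac{Z_{x,t}}{Z_{\emptyset,t}}=\prod_{\emptyset\prec y\preceq x}\frac{Z_{y,t}}{Z_{\mathrm{parent}(y),t}},
\]
and each factor converges almost surely, because normalising numerator and denominator by the same $e^{-(t-\tau_y)}$ gives $Z_{y,t}/Z_{\mathrm{parent}(y),t}\to W_y/(W_y+W_{\tilde y})=:U_y$, where $\tilde y$ is the sibling of $y$. As this is a product of finitely many convergent factors, $P_{x,t}\to\prod_{\emptyset\prec y\preceq x}U_y=P_x$ almost surely. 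The marginal law and the constraint are then immediate: since $W_y$ and $W_{\tilde y}$ are i.i.d.\ exponential, the ratio $U_y=W_y/(W_y+W_{\tilde y})$ is uniform on $[0,1]$, and $U_{y0}+U_{y1}=1$ by construction.

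The main obstacle is the independence of $(U_{y0})_{y\in\mathcal{T}}$, which I would prove by an inductive ``harvesting'' argument up the levels of a tree truncated at depth $n$, the engine being the beta--gamma independence $W_{p0}/(W_{p0}+W_{p1})\perp(W_{p0}+W_{p1})$ for i.i.d.\ exponentials. The inductive hypothesis at level $k$ is that the level-$k$ masses $(W_y)_{y\in\{0,1\}^k}$ together with all deeper splits $(U_{z0})_{\mathrm{level}(z)\geq k}$ are mutually independent, with the $W_y$ i.i.d.\ exponential and the $U_{z0}$ i.i.d.\ uniform; the base case at level $n$ holds because subtrees rooted at level-$n$ nodes are independent Yule processes. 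Passing from $k$ to $k-1$, for each parent $p$ at level $k-1$ I set $S_p=W_{p0}+W_{p1}$ and $W_p=e^{-A_p}S_p$: beta--gamma gives $U_{p0}\perp S_p$, the lifetime $A_p$ is independent of the subtree functionals by the branching property, and a short computation confirms that $W_p$ is again exponential of mean $1$ and independent of $U_{p0}$. Checking that the freshly harvested splits remain independent of the untouched deeper splits (they are functions of the level-$k$ masses and the $A_p$, both independent of those splits) closes the induction; letting $n\to\infty$, and using that independence of a countable family reduces to independence of every finite subfamily, finishes the proof. The delicate point throughout is the bookkeeping that the split ratio at a node is genuinely independent of the split ratios buried inside its own subtree, which is exactly the scale-invariance that beta--gamma independence supplies.
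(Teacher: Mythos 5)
Your proof is correct and follows essentially the same route as the paper's: both extract the split ratios $U_{y0}=W_{y0}/(W_{y0}+W_{y1})$ from the Yule martingale limits of sibling subtrees, use the beta--gamma independence of $U_{y0}$ and $W_{y0}+W_{y1}$ together with the recursion $W_y=e^{-A_y}(W_{y0}+W_{y1})$, and obtain joint independence of the splits by a level-by-level induction. The only cosmetic difference is that your induction harvests downward from a truncation depth while the paper inducts upward on the generation, exploiting that shallower splits are measurable with respect to the level-$l$ sums and lifetimes; the underlying bookkeeping is identical.
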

Lemma \ref{btst} will be proved in Section \ref{sec:mpfps}.
\subsection{Mutation frequency notation}\label{sec:mfn}
When DNA is taken from a tumour, the tumour's age is unknown, but one may have a rough idea of its size. Therefore we are interested in the cells' genetic state at the random time
\[
\sigma_n=\min\{t\geq0:|\mathcal{T}_t|=n\},
\]
when the total number of cells reaches some given $n\in\mathbb{N}$.

Write $V^\mu(x)=(V_i^\mu(x))_{i\in\mathcal{S}}\in\mathcal{G}$ for the genome of cell $x\in\mathcal{T}$ (where $\mu$ is the mutation rate). So $(V^\mu(x))_{x\in\mathcal{T}}$ is a Markov-process indexed by $\mathcal{T}$ with transition rates given in Section \ref{sec:model}. Write $V^\mu(\emptyset)=(u_i)_{i\in\mathcal{S}}$ for the initial cell's genome. A genetic site is said to be mutated if its nucleotide differs from that of the initial cell. Note that, according to this definition, a site which mutates and then sees a reverse mutation to its initial state is not considered to be mutated. Write
\bq\label{keynotation}
B^{n,\mu}_i=\left|\{x\in\mathcal{T}_{\sigma_n}:V^\mu_i(x)\not=u_i\}\right|
\eq
for the number of cells which are mutated at site $i\in\mathcal{S}$ when there are $n$ cells in total. The quantity (\ref{keynotation}), and its joint distribution over $\mathcal{S}$, is the key object of our study.
\subsection{Parameter regime}
The number of cells in a detected tumour may be in the region of $n=10^9$, whereas the mutation rate is in the region of $\mu=10^{-9}$ \cite{jea}.  The human genome's length is around $|\mathcal{S}|=3\times10^9$. Very roughly,
\[
n\approx\mu^{-1}\approx|\mathcal{S}|.
\]
Therefore we study the limits:
\begin{itemize}
\item
$n\rightarrow\infty$, $\mu\rightarrow0$, $n\mu\rightarrow\theta<\infty$;
\item
$n\rightarrow\infty$, $\mu\rightarrow0$, $n\mu\rightarrow\theta<\infty$, $|\mathcal{S}|\rightarrow\infty$ (sometimes with $|\mathcal{S}|\mu\rightarrow\eta<\infty$). 
\end{itemize}
\begin{remark}Taking the number of sites to infinity is not to be confused with the infinite sites assumption.\end{remark}
\section{Main results}\label{mutationfrequencies}
The first result shows that sites are typically mutated in only a finite number of cells, and that these numbers are independent across sites.
\begin{theorem}\label{idlda} As $n\rightarrow\infty$ and $n\mu\rightarrow\theta\in[0,\infty)$,
\ba(B_i^{n,\mu})_{i\in\mathcal{S}}\rightarrow(B_i)_{i\in\mathcal{S}}
\ea
in distribution, where the $B_i$ are i.i.d. and have Luria-Delbr{\"u}ck distribution with parameter $2\theta$.
\end{theorem}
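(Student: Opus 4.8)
The plan is to condition on the genealogy and exploit that, given the tree, the per-site mutation processes are independent, so that the only coupling between sites is through the shared clone sizes; I will then show that this coupling washes out in the limit.

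Write $\mathcal{T}$ for the full Yule genealogy with lifetimes, i.e.\ the data determining $\mathcal{T}_{\sigma_n}$ together with, for every daughter cell $y$ born before $\sigma_n$, its clone size $C_y:=|\{x\in\mathcal{T}_{\sigma_n}:y\preceq x\}|$. By the model assumption that the nucleotide transitions are independent across sites (and independent of the lifetimes), the vector $(B_i^{n,\mu})_{i\in\mathcal{S}}$ of (\ref{keynotation}) is, conditionally on $\mathcal{T}$, a vector of independent random variables, so the joint generating function factorizes after conditioning:
\[
\mathbb{E}\Big[\prod_{i\in\mathcal{S}} z_i^{B_i^{n,\mu}}\Big]=\mathbb{E}\Big[\prod_{i\in\mathcal{S}} g_i(\mathcal{T})\Big],\qquad g_i(\mathcal{T}):=\mathbb{E}\big[z_i^{B_i^{n,\mu}}\mid\mathcal{T}\big].
\]
First I would reduce $g_i$ to leading order in $\mu$. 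A single lineage carries two site-$i$ mutations with probability $O(\mu^2)$, and summing over the $2(n-1)$ daughters and $O(\log n)$ ancestral depths the total contribution of reverse and nested mutations is $O(\mu^2 n\log n)=O(\theta\mu\log n)\to0$; outside this negligible event a cell is site-$i$-mutated precisely when exactly one ancestral daughter mutated, each daughter mutating independently with probability $\mu$. Hence $B_i^{n,\mu}=\sum_{y}\xi^{(i)}_y C_y$ up to the negligible correction, with $\xi^{(i)}_y$ independent Bernoulli$(\mu)$, so that $g_i(\mathcal{T})=\prod_y(1-\mu+\mu z_i^{C_y})\,(1+o(1))$ and therefore $\log g_i(\mathcal{T})=-X_n(z_i)+o(1)$, where
\[
X_n(z):=\mu\sum_{y}\big(1-z^{C_y}\big),
\]
the error being uniform over $z$ in $[0,1]$ (using $\log(1-w)=-w+O(|w|^2)$ and $\sum_y\mu^2=O(\theta\mu)$).

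The decisive point is that $X_n(z)$, a functional of the genealogy alone, concentrates: I claim that for each fixed $z\in[0,1]$,
\[
X_n(z)\xrightarrow{\ \mathbb{P}\ } x(z):=-2\theta\,(z^{-1}-1)\log(1-z)
\]
as $n\to\infty$, $n\mu\to\theta$. Granting this, the theorem follows at once: since $|\mathcal{S}|$ is finite, $\sum_{i}X_n(z_i)\to\sum_i x(z_i)$ in probability, and $\prod_i g_i(\mathcal{T})=\exp(-\sum_i X_n(z_i)+o(1))$ is bounded, so bounded convergence gives
\[
\mathbb{E}\Big[\prod_{i\in\mathcal{S}} z_i^{B_i^{n,\mu}}\Big]\longrightarrow \exp\Big(-\sum_{i\in\mathcal{S}} x(z_i)\Big)=\prod_{i\in\mathcal{S}}(1-z_i)^{2\theta(z_i^{-1}-1)}.
\]
By (\ref{ldgfr}) the right-hand side is the product of Luria--Delbr\"uck$(2\theta)$ generating functions, which simultaneously yields the Luria--Delbr\"uck marginals and the independence across sites.

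To obtain the concentration I would bootstrap from the single-site statement, which is the $|\mathcal{S}|=1$ case and is a known result (\cite{lc,hy,kl,ka,ca}); alternatively it can be re-derived here from $\mathbb{E}X_n(z)=2\mu\sum_{l=2}^{n}(1-\mathbb{E}z^{C})\to x(z)$ using the Yule clone-size law, which is a P\'olya urn started from one marked cell among $l$. The slick route is this: replacing $\mu$ by $s\mu$ for fixed $s\ge0$ and repeating the leading-order expansion gives $\mathbb{E}[z^{B^{n,s\mu}}]=\mathbb{E}\exp(-sX_n(z))+o(1)$, and since $n(s\mu)\to s\theta$ the single-site limit identifies this as the Luria--Delbr\"uck$(2s\theta)$ generating function $\exp(2s\theta(z^{-1}-1)\log(1-z))=\exp(-s\,x(z))$; thus the Laplace transform of $X_n(z)$ converges for every $s\ge0$ to that of the constant $x(z)$, forcing $X_n(z)\to x(z)$ in distribution, hence in probability. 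The main obstacle is twofold: making the leading-order reduction genuinely uniform (controlling reverse and nested mutations, and doing so uniformly in $s$ on compacts for the bootstrap), and then the concentration itself. The Laplace-transform argument hands the latter to us from the marginal; a self-contained alternative would instead bound $\mathrm{Var}\,X_n(z)$ by estimating the covariances $\mathrm{Cov}(z^{C_y},z^{C_{y'}})$ of clone sizes on a shared Yule tree (positive when $y\preceq y'$, weakly negative otherwise) and showing their doubly-indexed sum is $o(\mu^{-2})$ — and that covariance estimate, rather than the mean computation, is where the real work would lie. The intuition that it succeeds is that the few macroscopic clones, whose fractions are the $P_x$ of Lemma \ref{btst}, each contribute only $O(\mu)$ to $X_n(z)$, so the functional is governed by a law of large numbers over the many microscopic clones.
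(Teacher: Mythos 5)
Your proposal is correct in outline, but it takes a genuinely different route from the paper. The paper proves Theorem \ref{idlda} as a corollary of the stronger Theorem \ref{mt}: it builds an explicit pathwise construction of the multitype process, decomposing the population into a ``primary'' subpopulation of never-mutated lineages, single-mutant clones seeded by a Poisson process whose intensity is driven by the primary population, and double-mutation clones which are shown to be negligible; it then deals with the random time $\sigma_n$ via the hitting time $\tau_n$, uses Skorokhod representation for almost-sure convergence of all ingredients, and finally exchanges conditioning on $\{W^*>0\}$ for conditioning on $\{\sigma_n<\infty\}$. That machinery is self-contained (it rederives the marginal Luria--Delbr\"uck law, in generalised form) and is designed to survive cell death, selection, and site-specific rates (Theorems \ref{mt} and \ref{ilddhmrs}), and it yields more than Theorem \ref{idlda}: full genome counts, including the statement that cells carrying two or more mutations vanish in the limit. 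Your argument instead conditions on the genealogy, factorises the joint generating function by conditional independence across sites, reduces everything to the genealogical functional $X_n(z)=\mu\sum_y(1-z^{C_y})$, and obtains its concentration by the Laplace-transform bootstrap $\mathbb{E}\exp(-sX_n(z))=\mathbb{E}[z^{B^{n,s\mu}}]+o(1)\to\exp(-s\,x(z))$, importing the single-site marginal from the literature (\cite{lc,hy,ka,kl,ca}; the paper's own remark confirms the $|\mathcal{S}|=1$ case is known, and \cite{ca} covers the fixed-size regime rigorously). This is shorter and quite elegant: the hard analytic step (concentration) comes for free from the marginal, exactly because the tree is the same for all mutation rates. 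What you give up is exactly what the paper's construction buys: your argument is not self-contained, it hinges on the genealogy being independent of the mutation process (true here because mutations are neutral, but false under selection, so it cannot be upgraded to Theorem \ref{ilddhmrs}), and it produces only per-site counts rather than genome counts.

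Three small points to tighten. First, the nested/reverse-mutation bound should be run in expectation: the sum of depths over all $2(n-1)$ daughter edges of the Yule tree has expectation $O(n\log n)$, so the probability of any nested pair at a given site is $O(\mu^2 n\log n)=O(\theta\mu\log n)\to0$; since all quantities involved are bounded by $1$, an expectation bound on this error event is all you need when passing to $\mathbb{E}\prod_i g_i(\mathcal{T})$. Second, you worry about uniformity in $s$ on compacts, but the continuity theorem for Laplace transforms of nonnegative random variables only needs pointwise convergence at each fixed $s\geq0$ (to a limit continuous at $0$), and convergence in distribution to a constant upgrades itself to convergence in probability; so that obstacle is illusory. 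Third, you should note explicitly that the cited single-site results apply at the random fixed-size time $\sigma_n$ (not a fixed time) and that the reduction you already perform handles the discrepancy between the four-nucleotide model with back-mutation and the binary one-way model treated in parts of that literature.
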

\begin{remark}Taking $|\mathcal{S}|=1$, Theorem \ref{idlda} recovers results of single site models \cite{lc,hy,ka,kl,ca}.
\end{remark}
The site frequency spectrum is a popular summary statistic of genetic data. It is defined as the empirical measure of mutation frequencies:
\[
\sum_{i\in\mathcal{S}}\delta_{B_i^{n,\mu}}.
\]
The site frequency spectrum sees a law of large numbers. 
\begin{theorem}\label{llnldd}As $n\rightarrow\infty$, $n\mu\rightarrow\theta\in[0,\infty)$, and $|\mathcal{S}|\rightarrow\infty$,
\ba
\frac{1}{|\mathcal{S}|}\sum_{i\in\mathcal{S}}\delta_{B_i^{n,\mu}}\rightarrow \Lambda
\ea
in probability, where $\Lambda$ is the Luria-Delbr{\"u}ck distribution with parameter $2\theta$. Convergence is on the space of probability measures on the non-negative integers equipped with the topology of weak convergence.
\end{theorem}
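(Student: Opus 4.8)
The plan is to reduce the convergence of the random empirical measure to the convergence of its individual point masses, and then to control those masses by a second-moment argument whose crucial input is the asymptotic independence already contained in Theorem \ref{idlda}. Write $\Lambda^{n,\mu}:=\frac{1}{|\mathcal{S}|}\sum_{i\in\mathcal{S}}\delta_{B_i^{n,\mu}}$ for the random empirical measure. Since the state space $\mathbb{N}_0$ is discrete, the weak topology on probability measures is metrised by
\[
d(\nu,\nu'):=\sum_{j\geq0}2^{-j}\left|\nu(\{j\})-\nu'(\{j\})\right|;
\]
indeed $d(\nu_N,\nu)\to0$ holds if and only if $\nu_N(\{j\})\to\nu(\{j\})$ for every $j$, and for probability measures this pointwise convergence of masses is equivalent to weak convergence by Scheff\'e's lemma. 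Because the limit $\Lambda$ is deterministic, it suffices to prove $\mathbb{E}\,d(\Lambda^{n,\mu},\Lambda)\to0$; and as $d\leq1$ with summable weights, Tonelli and dominated convergence reduce this to showing, for each fixed $j\in\mathbb{N}_0$, that $\mathbb{E}|\Lambda^{n,\mu}(\{j\})-\Lambda(\{j\})|\to0$.

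First I would treat the bias. By the symmetry of the model across sites the variables $(B_i^{n,\mu})_{i\in\mathcal{S}}$ are exchangeable, so $\mathbb{E}\,\Lambda^{n,\mu}(\{j\})=\mathbb{P}[B_1^{n,\mu}=j]=:p_j^{n,\mu}$. The case $|\mathcal{S}|=1$ of Theorem \ref{idlda} gives $B_1^{n,\mu}\to B$ in distribution, where $B$ has the Luria-Delbr{\"u}ck distribution with parameter $2\theta$; since $\mathbb{N}_0$ is discrete this yields $p_j^{n,\mu}\to\mathbb{P}[B=j]=\Lambda(\{j\})$, so the bias vanishes.

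The remaining step is a variance bound, and it is here that asymptotic independence is essential. By exchangeability the variance decomposes exactly as
\[
\mathrm{Var}\,\Lambda^{n,\mu}(\{j\})=\frac{1}{|\mathcal{S}|}\,v_j^{n,\mu}+\frac{|\mathcal{S}|-1}{|\mathcal{S}|}\,\gamma_j^{n,\mu},
\]
where $v_j^{n,\mu}=\mathrm{Var}(\mathbf{1}_{\{B_1^{n,\mu}=j\}})\leq1/4$ and $\gamma_j^{n,\mu}$ is the common pairwise covariance of two distinct sites. The first term is at most $1/(4|\mathcal{S}|)$ and vanishes as $|\mathcal{S}|\to\infty$. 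For the second, note that the joint law of any two sites $(B_i^{n,\mu},B_{i'}^{n,\mu})$ does not depend on the presence of the other sites, because conditional on the Yule tree the per-site mutation processes are independent across sites; hence this two-site marginal coincides with the $|\mathcal{S}|=2$ model, and Theorem \ref{idlda} gives $(B_i^{n,\mu},B_{i'}^{n,\mu})\to(B_1,B_2)$ in distribution with $B_1,B_2$ independent. Reading off point probabilities on the discrete space $\mathbb{N}_0^2$,
\[
\gamma_j^{n,\mu}=\mathbb{P}[B_i^{n,\mu}=j,\,B_{i'}^{n,\mu}=j]-\left(p_j^{n,\mu}\right)^2\to\mathbb{P}[B=j]^2-\mathbb{P}[B=j]^2=0.
\]
Thus $\mathrm{Var}\,\Lambda^{n,\mu}(\{j\})\leq\frac{1}{4|\mathcal{S}|}+|\gamma_j^{n,\mu}|\to0$ in the joint limit, with no coupling of rates required since the two error terms decay separately. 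Combining vanishing bias and variance gives $\mathbb{E}|\Lambda^{n,\mu}(\{j\})-\Lambda(\{j\})|^2\to0$, and Cauchy-Schwarz yields the $L^1$ convergence needed to close the reduction.

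The genuinely substantive ingredient is the pairwise asymptotic independence, which Theorem \ref{idlda} supplies for free; once that is in hand the law of large numbers is the standard exchangeable second-moment argument. The only points requiring mild care are the passage from pointwise mass convergence to weak convergence of measures (handled by the metric $d$ together with Scheff\'e's lemma) and the observation that the double limit causes no difficulty because the diagonal and off-diagonal contributions to the variance vanish independently. I therefore do not anticipate a serious obstacle beyond this bookkeeping.
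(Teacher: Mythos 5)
Your proposal is correct and takes essentially the same approach as the paper: the paper likewise proves convergence of the mean of each point mass $|\mathcal{S}|^{-1}\sum_{i\in\mathcal{S}}\delta_{B_i^{n,\mu}}\{k\}$ from the single-site marginal convergence in Theorem \ref{idlda} (via its generalisation, Theorem \ref{ilddhmrs}), and then kills the variance using exchangeability together with the vanishing of pairwise covariances supplied by that theorem's asymptotic independence. Your extra bookkeeping (the metric $d$, Scheff\'e's lemma, the $L^1$ reduction) merely makes explicit the step the paper leaves implicit, namely that convergence of point masses suffices for weak convergence of probability measures on $\mathbb{N}_0$.
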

Theorems \ref{idlda} and \ref{llnldd} teach us that almost every site is mutated in only a finite number of cells. What about the rare sites which are mutated in a positive fraction of cells? Heuristically, the Luria-Delbr{\"u}ck distribution's tail gives the probability that site $i$ is mutated in at least fraction $a$ of cells:
\bq\mathbb{P}[n^{-1}B^{n,\mu}_i>a]&\approx&\mathbb{P}[B_i\in (na,n)]\label{hwapp}\\
&\approx&2\mu(a^{-1}-1).\label{lddta}
\eq
Approximation (\ref{hwapp}) is a hand-waving consequence of Theorem \ref{idlda}. Approximation (\ref{lddta}) is due to Lemma \ref{lddplt}. The next result offers rigour.
\begin{theorem}\label{sslmf}Let $i\in\mathcal{S}$ and $a\in(0,1)$. As $n\rightarrow\infty$ and $n\mu\rightarrow\theta\in[0,\infty)$,
\ba
\mu^{-1}\mathbb{P}[n^{-1}B^{n,\mu}_i>a]\rightarrow2(a^{-1}-1).
\ea
\end{theorem}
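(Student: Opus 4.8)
The plan is to decompose the cells mutated at site $i$ into \emph{clones} --- each clone being a mutant cell together with its still-mutant descendants --- and to argue that, to leading order in $\mu$, the event $\{B^{n,\mu}_i>na\}$ is produced by a single clone that occupies a fraction of the population exceeding $a$. Working in the embedded jump chain of the Yule process, when the population grows from $k$ to $k+1$ cells a uniformly chosen cell divides, and a fresh clone at site $i$ is founded exactly when an unmutated cell divides and at least one daughter mutates; this has probability $1-(1-\mu)^2=2\mu+O(\mu^2)$ and is, to leading order, independent across the $n-1$ divisions. A clone founded at the step $k\to k+1$ begins as a single cell among $k+1$ and then grows by a P\'olya urn dynamics, so by Lemma \ref{btst} (equivalently, by the urn's de Finetti representation) its fraction of the population converges, as the population grows, to a $\mathrm{Beta}(1,k)$ variable with tail $\mathbb{P}[\mathrm{Beta}(1,k)>a]=(1-a)^k$. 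Summing first-moment contributions over founding times produces the candidate limit
\ba
\mu^{-1}\sum_{k=1}^{\infty}2\mu\,(1-a)^k&=&2\sum_{k=1}^{\infty}(1-a)^k=2(a^{-1}-1).
\ea

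For the lower bound I would let $L$ count the clones whose fraction of the $n$ cells exceeds $a$, so that $\{B^{n,\mu}_i>na\}\supseteq\{L\ge1\}$, and use the Bonferroni bound $\mathbb{P}[L\ge1]\ge\mathbb{E}[L]-\tfrac12\mathbb{E}[L(L-1)]$. After scaling by $\mu^{-1}$ the first moment converges to $2(a^{-1}-1)$: here Lemma \ref{btst} supplies the almost-sure convergence of each clone's fraction, while a uniform-in-$n$ domination of the finite-population tail by a summable sequence $C(1-a')^k$ lets me interchange the sum over $k$ with the limit by dominated convergence. The factorial moment $\mathbb{E}[L(L-1)]$ counts ordered pairs of distinct macroscopic clones, each demanding an independent founding of probability $O(\mu)$, so it is $O(\mu^2)$ and negligible after scaling.

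The upper bound carries the real difficulty. Fixing a small $\epsilon>0$, I would bound $\mathbb{P}[B^{n,\mu}_i>na]$ by the probability that some clone exceeds fraction $a-\epsilon$, handled again by a first moment and giving $2((a-\epsilon)^{-1}-1)+o(1)$ after scaling, plus the probability of the \emph{bad event} that $B^{n,\mu}_i>na$ while no clone exceeds fraction $a-\epsilon$; one then lets $\epsilon\downarrow0$. The main obstacle is to show the bad event has probability $o(\mu)$, because a priori the $na$ mutant cells could be furnished not by one large clone but by many smaller clones conspiring to a macroscopic total. This is genuinely delicate: the expected total fraction carried by the sub-macroscopic clones is of order $\mu\log n$, only logarithmically smaller than $\mu$, so first- and second-moment estimates by themselves are not enough.

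To defeat the conspiracy I would introduce a further, $n$-dependent scale $\delta'\asymp\epsilon/(\log n)^2$ and split the sub-macroscopic clones accordingly. Clones with fraction in $(\delta',a-\epsilon]$ are scarce --- their expected number is $O(\mu/\delta')=O(\mu(\log n)^2)\to0$ --- and since at least two are needed to assemble a macroscopic total their contribution is $O\big((\mu(\log n)^2)^2\big)=o(\mu)$. For the clones of fraction at most $\delta'$, whose sizes are now small enough to carry an exponential moment, I would apply an exponential Chebyshev bound with parameter $\lambda\asymp1/\delta'$: since $\lambda\,\mathbb{E}[\text{tiny-clone total}]=O(\mu(\log n)^3)\to0$ whereas $\lambda$ times the threshold grows like $(\log n)^2$, this yields a bound of order $\exp(-(\log n)^2)=o(\mu)$. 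It remains only to check the routine lower-order points --- two daughters mutating in one division, reverse mutations inside a clone, and two simultaneous macroscopic clones --- all of which enter at order $o(\mu)$, so that the single-clone first moment governs the limit.
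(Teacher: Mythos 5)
Your strategy reaches the right answer, but it is genuinely different from the paper's proof, and the comparison is instructive. The paper never decomposes the mutant population into all clones. Instead it conditions on $\xi^\mu=r$, the population size at which the \emph{first} mutation at site $i$ occurs (and on whether one or two daughters mutate; Lemma \ref{fmdl} shows the two-daughter case is negligible and $\mu^{-1}\mathbb{P}[\xi^\mu=r,\Xi_1^\mu]\to2$ for each fixed $r$). Conditionally on $\{\xi^\mu=r,\Xi_1^\mu\}$, Lemma \ref{ck1l} shows that $n^{-1}B^{n,\mu}$ has the same limit as the P\'olya-urn fraction $n^{-1}D^n$ of that single first clone, namely $\mathrm{Beta}(1,r-1)$ with tail $(1-a)^{r-1}$. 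The key point is that, conditional on the first mutation arising at size $r$, the expected fraction contributed by \emph{all other} mutant lineages is $O(\mu\log n)\to0$: this is the first-moment sandwich $D^n\le\hat B^{n,\mu}$, $B^{n,\mu}\le\hat B^{n,\mu}$, with all three conditional means converging to $r^{-1}$. So the ``conspiracy'' you work hard to exclude is killed, conditionally, by Markov's inequality alone --- no intermediate scale, no exponential moments. Summing over $r$ then needs only a uniform-in-$n$, summable domination of the conditional tails; the paper obtains $cr^{-2}$ from a third-moment computation (Lemma \ref{df}) and concludes by dominated convergence, giving $2\sum_{r\ge1}(1-a)^r=2(a^{-1}-1)$ exactly as in your first-moment heuristic. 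Your route must face the conspiracy unconditionally, which is what forces your multi-scale analysis; the paper's conditioning is what makes the proof short.

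Two caveats on your upper bound, one of which needs repair. Your medium/tiny dichotomy is logically correct (if at most one clone lies in $(\delta',a-\epsilon]$ and the tiny total is at most $\epsilon$, the grand total cannot exceed $a$), and the second-moment bound on medium clones is fine modulo the usual approximate-independence bookkeeping. But the exponential-Chebyshev step for the tiny clones quietly assumes independence (or at least negative dependence) of clone fractions in order to factorize or bound $\mathbb{E}[e^{\lambda\,\mathrm{total}}]$; clone fractions compete inside one population and are not independent, so as written this step is a gap. It can, however, be replaced by something cruder that avoids dependence altogether: the number of clones ever founded is stochastically dominated by $\mathrm{Binomial}(2(n-1),\mu)$, so the event that clones of fraction at most $\delta'$ total more than $\epsilon$ forces at least $\epsilon/\delta'=(\log n)^2$ clones to exist, which has probability at most $(2n\mu)^{(\log n)^2}/((\log n)^2)!=o(\mu)$. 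With that repair, and with your claimed uniform-in-$n$ tail domination $C(1-a')^k$ justified (e.g.\ via the urn-proportion martingale and an exponential or third-moment submartingale bound, in the spirit of Lemma \ref{df}), your proof goes through; it is simply longer than the conditioning argument the paper uses.
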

\begin{figure}
\begin{center}
\includegraphics{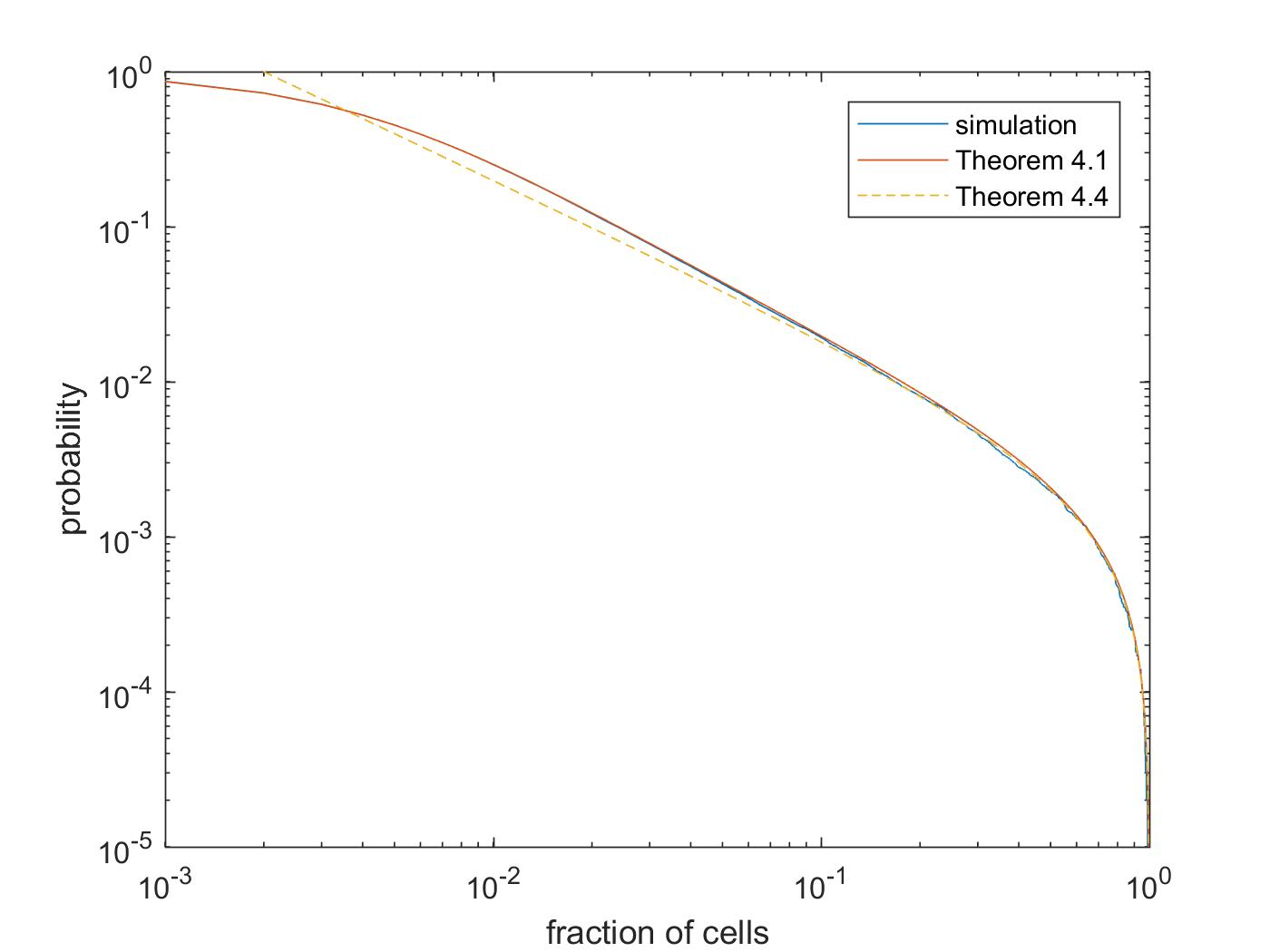}
	\caption{The number of mutant cells with respect to a single site is simulated $10^5$ times. The parameters are $\mu=10^{-3}$ and $n=10^3$. The plot compares $\mathbb{P}[n^{-1}B_1^{n,\mu}\in(a,1)]$ (simulation), $\mathbb{P}[n^{-1}B_1\in(a,1)]$ (Theorem \ref{idlda}), and $2\mu(a^{-1}-1)$ (Theorem \ref{sslmf}),  for $a\in(0,1)$. Simulation and Theorem \ref{idlda} appear indistinguishable.}\label{ssnmf}
\end{center}
\end{figure}
Theorem \ref{sslmf} and linearity of expectation yield the mean site frequency spectrum at positive fractions of the population.
\begin{cor}\label{lmfsfsm}Let $a\in(0,1)$. As $n\rightarrow\infty$, $n\mu\rightarrow\theta\in[0,\infty)$, and $|\mathcal{S}|\mu\rightarrow\eta\in[0,\infty)$,
\[
\mathbb{E}\sum_{i\in\mathcal{S}}\delta_{n^{-1}B_i^{n,\mu}}(a,1)\rightarrow 2\eta(a^{-1}-1).
\]
\end{cor}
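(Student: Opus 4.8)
The plan is to reduce the statement to Theorem~\ref{sslmf} by linearity of expectation together with a symmetry across sites, and then to control the single endpoint on which the events $\{n^{-1}B^{n,\mu}_i\in(a,1)\}$ and $\{n^{-1}B^{n,\mu}_i>a\}$ differ, namely the atom at $n^{-1}B^{n,\mu}_i=1$.

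First, the summand $\delta_{n^{-1}B^{n,\mu}_i}(a,1)$ is the indicator of $\{n^{-1}B^{n,\mu}_i\in(a,1)\}$, so by linearity of expectation
\[
\mathbb{E}\sum_{i\in\mathcal{S}}\delta_{n^{-1}B^{n,\mu}_i}(a,1)=\sum_{i\in\mathcal{S}}\mathbb{P}[n^{-1}B^{n,\mu}_i\in(a,1)].
\]
Because the mutation kernel sends any nucleotide to each of the other three with equal probability, the model is invariant under permutations of $\mathcal{N}$, so the law of $B^{n,\mu}_i$ does not depend on the initial nucleotide $u_i$; in particular the summands are all equal and the sum equals $|\mathcal{S}|\,\mathbb{P}[n^{-1}B^{n,\mu}_1\in(a,1)]$. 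Since $B^{n,\mu}_i\le n$ always, I would split
\[
\mathbb{P}[n^{-1}B^{n,\mu}_1\in(a,1)]=\mathbb{P}[n^{-1}B^{n,\mu}_1>a]-\mathbb{P}[B^{n,\mu}_1=n],
\]
and write $|\mathcal{S}|=(|\mathcal{S}|\mu)\,\mu^{-1}$. Theorem~\ref{sslmf} handles the first term directly: $(|\mathcal{S}|\mu)\cdot\mu^{-1}\mathbb{P}[n^{-1}B^{n,\mu}_1>a]\to\eta\cdot 2(a^{-1}-1)$.

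The main obstacle is the endpoint term, where I must show $(|\mathcal{S}|\mu)\,\mu^{-1}\mathbb{P}[B^{n,\mu}_1=n]\to0$; this amounts to $\mathbb{P}[B^{n,\mu}_1=n]=o(\mu)$, and in fact I expect $O(\mu^2)$ uniformly in $n$. To see this I would condition on the genealogical tree of the $n$ cells alive at $\sigma_n$ (a binary tree with $n$ leaves) and attach to each edge an independent Bernoulli$(\mu)$ indicator of whether the site-$1$ nucleotide changed across that edge. The event $\{B^{n,\mu}_1=n\}$, all leaves mutated, forces every root-to-leaf path to carry at least one flipped edge, i.e.\ the flipped edges must separate the root from every leaf. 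Writing $F(e)$ for the conditional probability that every leaf in the subtree below $e$ has a flipped edge on the path from the upper endpoint of $e$ down to it, one gets the recursion $F(e)=\mu+(1-\mu)F(e_1)F(e_2)$ over the two child edges $e_1,e_2$, with $F(e)=\mu$ on leaf edges. Hence $\max_e F(e)\le\mu+(\max_e F(e))^2$, giving $\max_e F(e)=\mu(1+O(\mu))$. Since the root's two child subtrees must both be separated and involve disjoint edge sets, $\mathbb{P}[B^{n,\mu}_1=n\mid\text{tree}]\le(\max_e F(e))^2=O(\mu^2)$ uniformly in the tree shape, so $\mathbb{P}[B^{n,\mu}_1=n]=O(\mu^2)$.

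Combining the two pieces, $(|\mathcal{S}|\mu)\,\mu^{-1}\mathbb{P}[B^{n,\mu}_1=n]=(|\mathcal{S}|\mu)\,O(\mu)\to0$, so the endpoint contributes nothing in the limit and the whole sum converges to $2\eta(a^{-1}-1)$, as claimed. I expect the cut/recursion bound for the all-mutated atom to be the only genuinely new ingredient; the remainder is bookkeeping around Theorem~\ref{sslmf} and the symmetry over sites.
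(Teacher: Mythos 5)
Your proposal is correct, and its skeleton (linearity of expectation, exchangeability of sites, then Theorem~\ref{sslmf}) is exactly the paper's route --- the paper's entire proof is the one sentence preceding the corollary. Where you genuinely differ is in noticing and treating the endpoint: $\delta_{n^{-1}B^{n,\mu}_1}(a,1)$ excludes the atom $\{B^{n,\mu}_1=n\}$, which the event $\{n^{-1}B^{n,\mu}_1>a\}$ of Theorem~\ref{sslmf} includes, and the paper silently ignores this discrepancy. Your tree-cut recursion for $\mathbb{P}[B^{n,\mu}_1=n]=O(\mu^2)$ is valid (conditional on the Yule genealogy the edge mutations are indeed i.i.d.\ Bernoulli$(\mu)$ on the $2n-2$ edges, and ``all leaves mutated'' does imply the flipped edges separate the root from every leaf), but two remarks are in order. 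First, the step ``$M\le\mu+M^2$ gives $M=\mu(1+O(\mu))$'' is not by itself conclusive, since $M$ close to $1$ also satisfies that inequality; you need the leaf-to-root induction (e.g.\ $F(e)\le 2\mu$ for all $e$ once $\mu\le 1/4$, proved by induction on subtree height) to stay on the small branch of the fixed-point equation. Second, the whole combinatorial ingredient is avoidable: for any $a'\in(a,1)$ one has $\mathbb{P}[B^{n,\mu}_1=n]\le\mathbb{P}[n^{-1}B^{n,\mu}_1>a']$, so Theorem~\ref{sslmf} gives $\limsup\mu^{-1}\mathbb{P}[B^{n,\mu}_1=n]\le 2(a'^{-1}-1)$, and letting $a'\uparrow 1$ yields $\mathbb{P}[B^{n,\mu}_1=n]=o(\mu)$, which is all you need. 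So your argument buys a stronger, uniform-in-$n$ bound $O(\mu^2)$ at the cost of extra machinery, while monotonicity plus the theorem you already invoke closes the gap with no new ideas.
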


The next result gives the distribution of the site frequency spectrum at positive fractions of the population.

\begin{theorem}\label{dslfm}As $n\rightarrow\infty$, $n\mu\rightarrow\theta\in[0,\infty)$, and $|\mathcal{S}|\mu\rightarrow\eta\in[0,\infty)$,
\[
\sum_{i\in\mathcal{S}}\delta_{n^{-1}B_i^{n,\mu}}\rightarrow\sum_{x\in\mathcal{T}\backslash\{\emptyset\}}M_x\delta_{P_x}
\]
in distribution, with respect to the vague topology on the space of measures on $(0,1]$. That is, the measure applied to a finite collection of closed intervals in $(0,1]$ sees joint convergence. The random variables which appear in the limit are:
\begin{itemize}
\item
$(M_x)$ is a family of i.i.d. Poisson($\eta$) random variables;
\item
$(P_x)$ is from Lemma \ref{btst} and is independent of $(M_x)$.
\end{itemize}
\end{theorem}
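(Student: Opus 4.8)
The plan is to exploit the vague-topology reduction stated in the theorem: it suffices to prove joint convergence of $(\Xi_n(I_1),\dots,\Xi_n(I_k))$ for any finite family of closed intervals $I_1,\dots,I_k\subset(0,1]$, where I write $\Xi_n=\sum_{i\in\mathcal S}\delta_{n^{-1}B_i^{n,\mu}}$. Fix $a>0$ with all $I_j\subset[a,1]$. The guiding picture is that a site contributes a point to $[a,1]$ precisely when it carries an early \emph{driving mutation}: a mutation occurring at the division that produces some cell $x$ whose subtree is a macroscopic fraction of the final population, so that the mutant frequency is $\approx P_{x,\sigma_n}\to P_x$.

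First I would set up the driving-mutation decomposition. For $x\in\mathcal T\setminus\{\emptyset\}$ and $i\in\mathcal S$, let $D_{x,i}$ be the indicator that site $i$ flips its nucleotide at the division producing $x$; the $D_{x,i}$ are independent with $\mathbb P[D_{x,i}=1]=\mu$, and because a typical cell differs from the initial genome $(u_i)$ at only $O(1)$ sites as $\mu\to0$, such a flip makes site $i$ mutated with probability tending to $1$. Set $M_x^{(n)}=\sum_{i\in\mathcal S}D_{x,i}$, the number of sites driven at $x$; this is $\mathrm{Binom}(\approx|\mathcal S|,\mu)$ and hence converges to $\mathrm{Poisson}(\eta)$ since $|\mathcal S|\mu\to\eta$, independently across cells $x$ because distinct divisions use independent randomness. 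Conditionally on such a driving mutation, all of $x$'s descendants inherit it except those suffering a reversion to $u_i$; since the reverted fraction has expectation $O(\mu\log n)\to0$, Lemma \ref{btst} gives $n^{-1}B_i^{n,\mu}\to P_x$. Double driving mutations of one site, or a site driven at two distinct cells, have probability $O(\mu^2)$ and remain negligible even after summing over the $O(|\mathcal S|)$ sites.

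Next I would truncate to the finitely many relevant cells. Only cells with $P_x$ near $[a,1]$ can produce a point there, and the mean computation $\mathbb E|\{x:P_x>a\}|=2(a^{-1}-1)$, via the self-similar recursion underlying Corollary \ref{lmfsfsm}, shows this set is a.s.\ finite; passing to a slightly smaller threshold $a-\epsilon$ and using Lemma \ref{btst} yields joint a.s.\ convergence of the finitely many proportions $P_{x,\sigma_n}\to P_x$. I would then condition on the Yule tree and its proportions. Given the tree, $\Xi_n(I_j)=\sum_x M_x^{(n)}\mathbf 1[P_{x,\sigma_n}\in I_j]$ up to the negligible errors above, a sum of independent marks at given locations, so the Poisson limit of the $M_x^{(n)}$ furnishes, conditionally, a Cox process with the claimed atoms. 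Since $P_x$ has a continuous law we have $\mathbb P[P_x\in\partial I_j]=0$, so boundary ambiguities disappear; de-conditioning and using independence of the tree from the mutation marks identifies the limit as $\sum_x M_x\delta_{P_x}$ with $(M_x)$ i.i.d.\ $\mathrm{Poisson}(\eta)$ independent of $(P_x)$.

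The hard part will be the uniform control of the error terms, not the Poissonization. I expect the crux to be showing that perturbations of a mutant frequency (from reversions and secondary mutations) cannot push sites across an interval endpoint in a way that survives in the limit: this requires an $\epsilon$-sandwich estimate combining the vanishing reverted fraction with the smallness of $\mathbb P[P_x\in(a-\epsilon,a+\epsilon)]$, summed over a number of sites growing like $|\mathcal S|$. Making the joint convergence of (tree proportions, independent Poisson marks) rigorous --- for instance through a Laplace functional computed conditionally on the tree and then averaged using Lemma \ref{btst} --- is the other delicate point, since it is precisely there that the limiting independence of $(M_x)$ and $(P_x)$ must be secured.
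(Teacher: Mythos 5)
Your overall architecture---Poissonization of the numbers of sites mutating at each cell, truncation to the a.s.\ finitely many cells with macroscopic descendant fractions, and a conditional Cox-process limit---is the same skeleton as the paper's Proposition \ref{isvmt} (which rests on Lemma \ref{cotnmn}, Lemma \ref{finitepset} and Lemma \ref{btst}). The genuine gap is in the step you yourself flag as the crux: controlling the discrepancy between the true frequencies $n^{-1}B_i^{n,\mu}$ and the idealised frequencies $P_{x,\sigma_n}$. The one quantitative claim you offer there is false: you assert that ``a site driven at two distinct cells'' has probability $O(\mu^2)$. With $n\mu\to\theta>0$ there are $2n-2\sim2\theta/\mu$ divisions, so the probability that a given site mutates at two distinct cells tends to $1-e^{-2\theta}(1+2\theta)$, which is $\Theta(1)$, not $O(\mu^2)$; this is precisely the point of Section \ref{infinitesitessection} (ISA violations are widespread). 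Only mutations at two \emph{macroscopic} cells are $O(\mu^2)$-rare. Consequently the sites contributing to the spectrum on $(a,1]$ typically do carry extra mutations at microscopic cells, and one must show that the aggregate of these cannot push frequencies across interval endpoints. Per site the expected extra frequency is $O(\mu\log n)\to0$, as you note, but this is not good enough: summed over $|\mathcal{S}|\sim\eta/\mu$ sites, a Markov-type bound gives $O(\eta\log n)\to\infty$, so the union/expectation argument you sketch (``summed over a number of sites growing like $|\mathcal{S}|$'') does not close. Your proposal defers exactly this to an unspecified ``$\epsilon$-sandwich estimate,'' which is the hard part of the theorem.

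The paper closes this gap by a different, non-perturbative device which you may want to compare. It introduces the descendant closure $\hat{B}_i^{n,\mu}$ (cells descended from \emph{any} cell ever mutated at site $i$), which dominates pointwise both the true frequency $n^{-1}B_i^{n,\mu}$ and the ISA frequency $P_{\phi_i^\mu,\sigma_n}$, and proves that all three quantities have the same limiting mean spectrum $2\eta(a^{-1}-1)$ on $(a,1)$: Corollary \ref{lmfsfsm}, Proposition \ref{ssmfps} and Lemma \ref{esfsisa}, which in turn rest on the single-site analysis of Subsection \ref{sec:ssmfaub} (conditioning on the arrival of the first mutant, a Polya-urn/Beta limit, and third-moment tail bounds). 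For random variables $A\le C$ and $B\le C$ one has $\mathbb{E}|A-B|\le\mathbb{E}(C-A)+\mathbb{E}(C-B)=2\mathbb{E}C-\mathbb{E}A-\mathbb{E}B$, so the matching of first moments alone forces $L^1$-closeness of the true and ISA spectra (Lemma \ref{isaifa}); no per-site uniform control of reversions or secondary mutations is ever required. To rescue your direct approach you would need either to reproduce this moment-matching trick or to prove a genuinely uniform (over the $O(1/\mu)$ sites) bound on the probability that microscopic secondary mutations move a frequency by more than $\epsilon$, e.g.\ via second-moment estimates conditional on the tree; as written, the proposal does not contain that argument.
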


\begin{remark}\label{msflfl}The mean site frequency spectrum, according to Theorem \ref{dslfm}'s limit, is
\ba
\mathbb{E}\left[\sum_{x\in\mathcal{T}\backslash\{\emptyset\}}M_x\delta_{P_x}(a,1)\right]=2\eta(a^{-1}-1),
\ea
which recovers the limit of Corollary \ref{lmfsfsm}.
\end{remark}
\begin{remark}\label{vsflfl}The variance of the site frequency spectrum, according to Theorem \ref{dslfm}'s limit, is bounded below by
\[
\text{Var}\left[\sum_{x\in\mathcal{T}\backslash\{\emptyset\}}M_x\delta_{P_x}(a,1)\right]\geq2\eta(a^{-1}-1).
\]
In particular, the coefficient of variation tends to infinity as $a\uparrow1$.
\end{remark}
The details of Remarks \ref{msflfl} and \ref{vsflfl} are given in Section \ref{pfrsfs}.

\begin{figure}
\begin{center}
\includegraphics{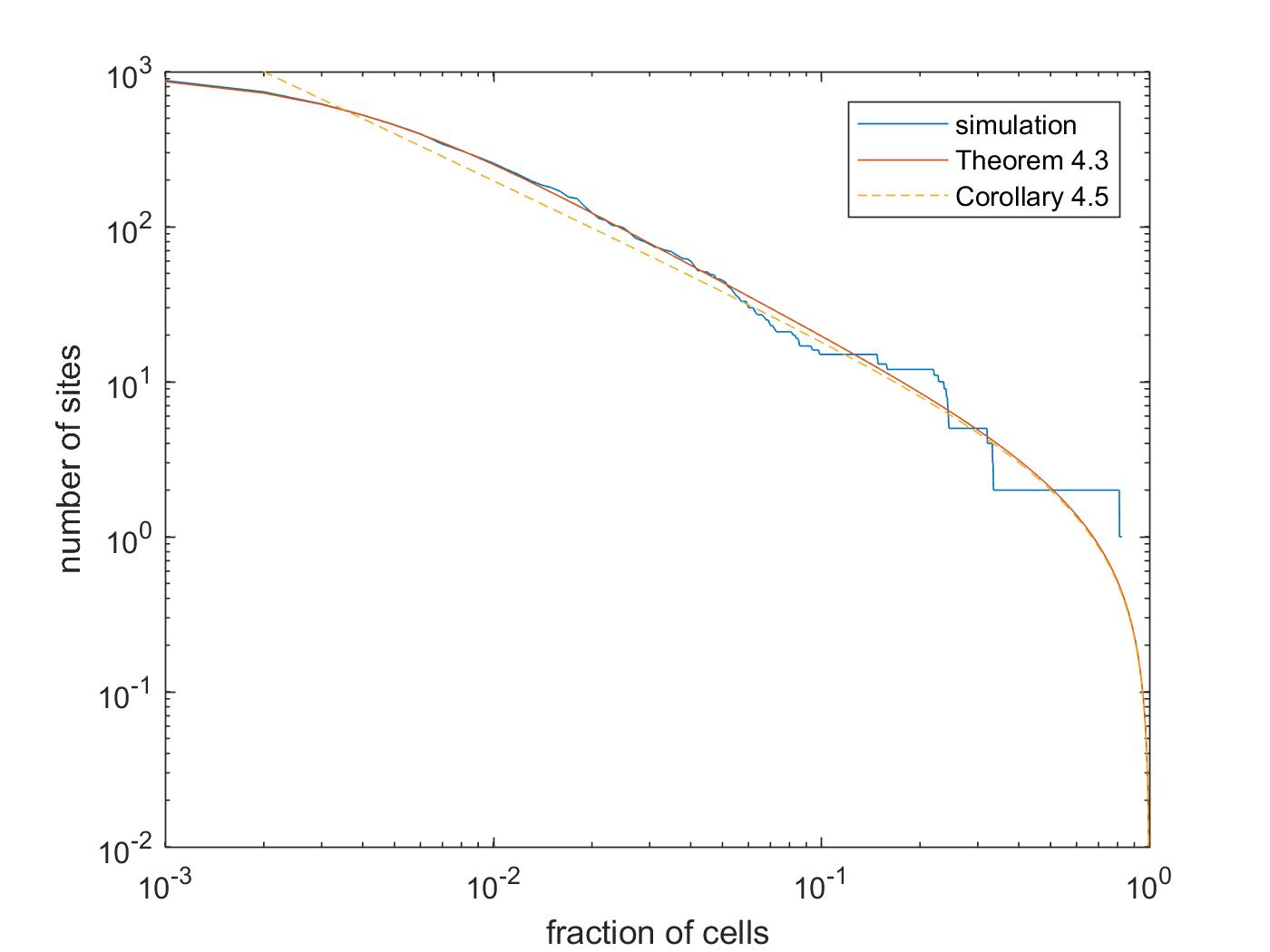}
	\caption{The site frequency spectrum is simulated a single time. The parameters are $\mu=10^{-3}$, $n=10^3$, and $|\mathcal{S}|=10^3$. The plot compares $\sum_{i\in\mathcal{S}}\delta_{n^{-1}B_i^{n,\mu}}(a,1)$ (simulation), $|\mathcal{S}|\mathbb{P}[n^{-1}B_1\in(a,1)]$ (Theorem \ref{llnldd}), and $2|\mathcal{S}|\mu(a^{-1}-1)$ (Corollary \ref{lmfsfsm}),  for $a\in(0,1)$.}
\end{center}
\end{figure}

\section{Generalisations}\label{gmrcs}
Motivated by biological reality, we introduce some generalisations: cell death, selection, and heterogeneous mutation rates.
\subsection{Model and notation}
Starting with one cell, the cell population grows according to a continuous-time multitype Markov branching process. The types are the genomes, elements of $\mathcal{G}=\mathcal{N}^\mathcal{S}$. It will be helpful to classify different types of genetic site. Partition the sites into \textit{neutral} and \textit{selective} sites:
\[
\mathcal{S}=\mathcal{S}_\text{neut}\cup\mathcal{S}_\text{sel},
\]
with $\mathcal{S}_\text{sel}\not=\emptyset$.
For a genome $v=(v_i)_{i\in\mathcal{S}}$, write $v'=(v_i)_{i\in\mathcal{S}_\text{sel}}$ for its restriction to the selective sites. Let $\alpha$ and $\beta$ be functions with domain $\mathcal{N}^{\mathcal{S}_\text{sel}}$ and range $[0,\infty)$. A cell with genome $v$ divides at rate $\alpha(v')$ (to be replaced by two daughter cells) and dies at rate $\beta(v')$. 

The initial cell is said to have genome $u$, which is assumed to give a positive growth rate: $\alpha(u')>\beta(u')$.

Consider a cell with genome $(v_i)_{i\in\mathcal{S}}$ dividing to give daughter cells with genomes $(V_i^{(1)})_{i\in\mathcal{S}}$ and $(V_i^{(2)})_{i\in\mathcal{S}}$. Conditional on $(v_i)$, the $V_i^{(r)}$ are independent over $i\in\mathcal{S}$ and $r\in\{1,2\}$, and
$$\mathbb{P}\left[V_i^{(r)}=\psi|(v_i)\right]=\mu_i^{v_i,\psi}.
$$

Slightly adapting previous notation, write
\[
\mu=\left(\mu_i^{\chi,\psi}\right)_{i\in\mathcal{S};\chi,\psi\in\mathcal{N}}
\]
for the collection of mutation rates. Now let's state the notation for mutation frequencies (for brevity, unlike in Section \ref{sec:mfn}, we shall do so in words). Write $B_i^{n,\mu}$ for the number of cells which are mutated at site $i$ when $n$ cells are first reached \textit{conditioned on the event that $n$ cells are reached}.
\subsection{Generalised Luria-Delbr{\"u}ck distribution}
Let $(\xi_k)_{k\in\mathbb{N}}$ be an i.i.d. sequence of exponentially distributed random variables with mean $\lambda^{-1}$. Let $(Y_k(\cdot))_{k\in\mathbb{N}}$ be an i.i.d. sequence, where $Y_1(\cdot)$ is a birth-death branching process with birth and death rates $a$ and $b$ respectively and initial condition $Y_1(0)=1$. Let $K$ be a Poisson random variable with mean $c$. The $(\xi_k)$, $(Y_k(\cdot))$, and $K$ are independent. The \textit{generalised Luria-Delbr{\"u}ck distribution} with parameters ($\lambda,a,b,c$) is defined as the distribution of
\[
B=\sum_{k=1}^KY_k(\xi_k).
\]
Its generating function
\[
\mathbb{E}z^B=\exp\left(c(b/a-1) F\left[1,\frac{\lambda}{a-b};1+\frac{\lambda}{a-b};\frac{b/a-z}{1-z}\right]\right)
\]
when $a>b$ is seen in \cite{ka,kl,ca}. Here $F$ is Gauss's hypergeometric function.

Taking parameters $(\lambda,\lambda,0,c)$ recovers the Luria-Delbr{\"u}ck distribution with parameter $c$.

The generalised Luria-Delbr{\"u}ck distribution with parameters ($\lambda,\lambda a,\lambda b,c$), for $\lambda>0$ and $a,b,c\geq0$, does not depend on $\lambda$. So one could define the distribution with $3$ rather than $4$ parameters. We choose $4$ for a cleaner interpretation of results.
\subsection{Results}
To begin, Theorem \ref{idlda} is generalised. The genomes whose only difference from the initial cell's genome is at site $i\in\mathcal{S}$,
\bq\label{osimg}
\mathcal{G}_{i}=\{v\in\mathcal{G}:\forall j\in\mathcal{S},(u_j\not=v_j\iff i=j)\},
\eq
will play a crucial role.
\begin{theorem}\label{ilddhmrs}
Take $n\rightarrow\infty$ and $n\mu_i^{\chi,\psi}\rightarrow\theta_i^{\chi,\psi}\in[0,\infty)$ for all $i\in\mathcal{S}$ and $\chi,\psi\in\mathcal{N}$ with $\chi\not=\psi$. Then
\[
\left(B_i^{n,\mu}\right)_{i\in\mathcal{S}}\rightarrow\left(\sum_{v\in\mathcal{G}_i}X_v\right)_{i\in\mathcal{S}}
\]
in distribution, where the $X_v$ are independent and have generalised Luria-Delbr{\"u}ck distributions with parameters
\[
\left(\alpha(u')-\beta(u'),\alpha(v'),\beta(v'),\frac{2\alpha(u')\theta_i^{u_i,v_i}}{\alpha(u')-\beta(u')}\right).
\]

\end{theorem}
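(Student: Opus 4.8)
The plan is to decompose the count $B_i^{n,\mu}$ by the genome of each mutant cell and show that, as $\mu\to 0$, only the singly-mutated genomes survive in the limit. Write $C_v^{n}$ for the number of cells of genome exactly $v$ at time $\sigma_n$, so that $B_i^{n,\mu}=\sum_{v\in\mathcal{G}_i}C_v^{n}+R_i^{n}$, where $R_i^{n}$ counts the cells mutated at site $i$ that carry at least one further mutation and, with the opposite sign, corrects for reverse mutations at site $i$. The proof then splits into two reductions: first, that $R_i^{n}\to 0$ in probability jointly over $i\in\mathcal{S}$; and second, that $\left(\sum_{v\in\mathcal{G}_i}C_v^{n}\right)_{i\in\mathcal{S}}$ converges in distribution to $\left(\sum_{v\in\mathcal{G}_i}X_v\right)_{i\in\mathcal{S}}$ with the stated independence. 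This parallels and extends the structure underlying Theorem \ref{idlda}.

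For the main term I would work conditionally on the wild-type subpopulation. Because the mutation rate vanishes, the cells of genome $u$ form, up to a negligible coupling correction, a birth--death process $Z_u(\cdot)$ with rates $\alpha(u'),\beta(u')$ and net rate $\lambda=\alpha(u')-\beta(u')$; conditioned on reaching $n$ cells (asymptotically, on non-extinction) it satisfies $Z_u(t)\sim We^{\lambda t}$ for the martingale limit $W>0$, and $Z_u(\sigma_n)=n-o(n)$ since the total mutant population is of order $1$. Conditionally on the trajectory $Z_u$, the $u\to v$ founding events for distinct $v$ form independent Poisson processes (thinnings of the wild-type division process, the $\mu^2$ events of two simultaneous mutations being negligible) with intensity $2\alpha(u')\mu_i^{u_i,v_i}Z_u(s)\,ds$, and each founded clone grows as an independent birth--death process with rates $\alpha(v'),\beta(v')$. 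Hence $C_v^{n}=\sum_k Y_k(\sigma_n-s_k)$, a sum over the founding times $s_k$ of independent clone sizes, which is precisely the defining form of the generalised Luria--Delbr\"uck distribution.

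Taking limits inside this conditional representation is the heart of the argument. The Poisson mean $\int_0^{\sigma_n}2\alpha(u')\mu_i^{u_i,v_i}Z_u(s)\,ds\sim\frac{2\alpha(u')}{\lambda}\mu_i^{u_i,v_i}Z_u(\sigma_n)\to\frac{2\alpha(u')}{\lambda}\theta_i^{u_i,v_i}=c$, which is deterministic because it is anchored at $Z_u(\sigma_n)\approx n$ rather than at the random $We^{\lambda\sigma_n}$. Likewise the clone ages $\sigma_n-s_k$, whose density is proportional to $Z_u(s)\propto e^{\lambda s}$, converge to $\mathrm{Exp}(\lambda)$ irrespective of $W$. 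Thus the conditional law of each $C_v^{n}$ given $Z_u$ converges to the deterministic generalised Luria--Delbr\"uck law with parameters $(\lambda,\alpha(v'),\beta(v'),c)$, so the conditional joint law over all relevant $v$ converges to the product of these laws. This single step simultaneously delivers the limit of the main term and the asserted independence of the $X_v$ (and their independence of $Z_u$), which I would then combine into unconditional joint convergence.

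The step I expect to be the main obstacle is showing $R_i^{n}\to 0$ in probability, since for neutral (and advantageous) sites the limiting clone sizes have a power-law tail and infinite mean, so the expected number of second mutations need not vanish. I would handle this by a two-scale truncation. On the event that every singly-mutant clone stays below a fixed level $L$, the integrated mutant population is bounded by $L$ times the sum of clone ages, whose expectation is of order $1$, so the expected number of second-mutation events is $O(\mu L)\to 0$. On the complementary event that some clone exceeds $L$, the probability is $O(L^{-\lambda/g})$ with $g=\alpha(v')-\beta(v')>0$ by the power-law tail of the clone-size distribution (in the spirit of Lemma \ref{lddplt}), and is made small by sending $L\to\infty$ after $n\to\infty$. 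Reverse mutations at site $i$ are controlled identically. Combining the two reductions yields the joint convergence in distribution asserted by the theorem.
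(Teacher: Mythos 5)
Your proposal follows essentially the same route as the paper. The paper also proves Theorem \ref{ilddhmrs} by first establishing joint convergence of all genome counts (Theorem \ref{mt}): it constructs the process around a ``primary'' wild-type population $Z^n$ (a birth--death process whose rates converge to $\alpha(u'),\beta(u')$), founding events for each single-mutant genome $v$ given by independent Poisson processes with intensity $2\alpha(u')p_n(u,v)Z^n(s)\,ds$, and independent clone processes; and its key computation (Lemma \ref{consmt}) is exactly your anchoring observation: because $\int_0^{\sigma_n}Z^n(s)\,ds$ is normalised by $Z^n(\tau_n)=n$ rather than by $We^{\lambda\sigma_n}$, the martingale limit cancels, the Poisson parameter becomes $2\lambda^{-1}\alpha(u')\theta_i^{u_i,v_i}$, and the clone ages become exponential (this uses $\sigma_n-\tau_n\rightarrow0$, Lemma \ref{tnrncon}). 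Where you genuinely diverge is the step you call the main obstacle. You are right that a naive first-moment bound on second mutations fails when $\alpha(v')-\beta(v')\geq\lambda$ (infinite-mean clone sizes), and your two-scale truncation at level $L$ with the power-law tail estimate is a workable fix. But the paper sidesteps the issue entirely: its clone processes $\mathcal{Y}^n_{v,j}$ are full multitype processes carrying their own future mutations, Lemma \ref{easys} shows they converge on compact time intervals to \emph{single-type} birth--death processes merely because the transition rates converge, and since the clone ages converge to finite limits, compact-interval convergence is all that is needed (Lemma \ref{antncon}). No moments, tails, or truncation ever enter; this is the cleaner argument and worth noting.

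The one place where your write-up asserts rather than proves is the conditioning. $B_i^{n,\mu}$ is defined conditional on $\{\sigma_n<\infty\}$, and your parenthetical ``(asymptotically, on non-extinction)'' is precisely Proposition \ref{stwmp}, to which the paper devotes a full subsection: one must show both $\mathbb{P}[W^*>0,\sigma_n=\infty]\rightarrow0$ and, harder, $\mathbb{P}[W^*=0,\sigma_n<\infty]\rightarrow0$ (Lemma \ref{wnzs}), i.e.\ that mutant clones cannot rescue a dying primary population up to $n$ cells; Corollary \ref{cortws} then legitimises swapping the two conditionings. This is fillable along lines consistent with your approach (clones are founded at rate proportional to $\mu_n Z^n$, and $\mathbb{E}\bigl[\int_0^\infty Z^n(s)\,ds\,\big|\,\text{extinction}\bigr]=O(1)$, so with probability $1-O(\mu_n)$ no clone is ever founded on the extinction event), and likewise your one-line dismissal of simultaneous double mutations can be backed by a first-moment bound in your in-distribution framework (the paper's pathwise treatment, Lemmas \ref{cubnds} and \ref{kyztz}, is more involved only because it wants almost sure statements after Skorokhod representation). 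With those two steps written out, your argument is complete.
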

In the next result, which generalises Theorem \ref{llnldd}, we keep the number of selective sites finite while taking the number of neutral sites to infinity. For this limit, mutation rates require consideration. Partition the set of neutral sites:
\[
\mathcal{S}_\text{neut}=\bigcup_{j\in J}\mathcal{S}(j),
\]
such that mutation rates and the initial genome's nucleotides are homogeneous on $\mathcal{S}(j)$ ($J$ is just some indexing set). Write $\mu^{\chi,\psi}(j)=\mu_i^{\chi,\psi}$ for the mutation rates of the sites $i\in\mathcal{S}(j)$. Write $u(j)=u_i$ for the initial genome's nucleotide at the sites $i\in\mathcal{S}(j)$.

\begin{theorem}\label{llnws}Take $n\rightarrow\infty$, $n\mu^{\chi,\psi}(j)\rightarrow\theta^{\chi,\psi}(j)\in[0,\infty)$, $|\mathcal{S}_\text{neut}|\rightarrow\infty$, and $|\mathcal{S}(j)|/|\mathcal{S}_\text{neut}|\rightarrow q(j)$, for all $j\in J$ and $\chi,\psi\in\mathcal{N}$ with $\chi\not=\psi$. Then
\ba
\frac{1}{|\mathcal{S}|}\sum_{i\in\mathcal{S}}\delta_{B_i^{n,\mu}}\overset{p}{\rightarrow} \sum_{j\in J}q(j)\Lambda(j)
\ea
where the $\Lambda(j)$ are generalised Luria-Delbr{\"u}ck distributions with parameters
\[
\left(\alpha(u')-\beta(u'),\alpha(u'),\beta(u'),\frac{2\alpha(u')}{\alpha(u')-\beta(u')}\sum_{\psi\in\mathcal{N}\backslash\{u(j)\}}\theta^{u(j),\psi}(j)\right).
\]
Convergence is on the space of probability measures on the non-negative integers equipped with the topology of weak convergence.
\end{theorem}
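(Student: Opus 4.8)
The plan is to prove convergence of each atom in probability by the second moment method, and then upgrade to the weak topology. Write $\nu_n=\frac{1}{|\mathcal{S}|}\sum_{i\in\mathcal{S}}\delta_{B_i^{n,\mu}}$ and let the target be $\Lambda:=\sum_{j\in J}q(j)\Lambda(j)$. First I would discard the selective sites: since $|\mathcal{S}_{\text{sel}}|$ is fixed while $|\mathcal{S}_{\text{neut}}|\to\infty$, the selective sites carry total mass $|\mathcal{S}_{\text{sel}}|/|\mathcal{S}|\to0$ and do not affect the limit, so I may restrict attention to neutral sites; the block weights then satisfy $|\mathcal{S}(j)|/|\mathcal{S}|\to q(j)$.

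The engine is Theorem \ref{ilddhmrs} together with a neutrality-invariance observation. Because the division and death rates $\alpha(v'),\beta(v')$ depend only on the selective coordinates $v'$, a mutation at a neutral site never alters any cell's division or death rate. Consequently the genealogy, the total-population process, and the hitting time $\sigma_n$ are all measurable with respect to the selective-genotype process alone, and the count $B_i^{n,\mu}$ at a neutral site $i$ is determined by this background together with the independent site-$i$ mutation marks. Adding further neutral sites only appends further independent marks and changes neither the background nor the site-$i$ process. Therefore the joint law of $(B_i^{n,\mu})_{i\in F}$ for any fixed finite set $F$ of neutral sites does not depend on $|\mathcal{S}_{\text{neut}}|$, and equals the corresponding joint law in the fixed finite model with site set $\mathcal{S}_{\text{sel}}\cup F$. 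Applying Theorem \ref{ilddhmrs} in that fixed model gives, for $i\in\mathcal{S}(j)$, that $B_i^{n,\mu}\to\sum_{v\in\mathcal{G}_i}X_v$; since a neutral mutation leaves $v'=u'$, the variables $X_v$ share their first three parameters, so summing them over $v\in\mathcal{G}_i$ merely adds their Poisson parameters and yields exactly $\Lambda(j)$. This delivers both the marginal convergence $\mathbb{P}[B_i^{n,\mu}=m]\to\Lambda(j)(\{m\})$ and, for fixed neutral $i\neq i'$, the pairwise asymptotic independence $(B_i^{n,\mu},B_{i'}^{n,\mu})\to\Lambda(j)\otimes\Lambda(j')$.

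With these in hand the moment computation is routine. Grouping by blocks, $\mathbb{E}[\nu_n(\{m\})]=\sum_j(|\mathcal{S}(j)|/|\mathcal{S}|)\,\mathbb{P}[B_{i(j)}^{n,\mu}=m]\to\sum_jq(j)\Lambda(j)(\{m\})$. For the variance, the diagonal contributes $O(1/|\mathcal{S}|)$, while the off-diagonal is $\frac{1}{|\mathcal{S}|^2}\sum_{i\neq i'}\mathrm{Cov}(\mathbf{1}[B_i^{n,\mu}=m],\mathbf{1}[B_{i'}^{n,\mu}=m])$; by within-block homogeneity there are only finitely many distinct covariances (one per ordered block-pair), each tending to $0$ by pairwise independence, and the weights $|\mathcal{S}(j)||\mathcal{S}(j')|/|\mathcal{S}|^2$ stay bounded, so the whole sum vanishes. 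Chebyshev then gives $\nu_n(\{m\})\to\sum_jq(j)\Lambda(j)(\{m\})$ in probability for each $m$. Finally I would upgrade to the weak topology by metrizing it with $d(\rho,\rho')=\sum_m2^{-m}|\rho(\{m\})-\rho'(\{m\})|$; each term converges in probability and the sum is dominated by a convergent series, so $\mathbb{E}[d(\nu_n,\Lambda)]\to0$ by dominated convergence, giving convergence in probability in the weak topology.

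The substantive content, and the step I expect to be the main obstacle, is the pairwise asymptotic independence: that the shared genealogy does not induce persistent correlations between counts at different neutral sites. I reduce it to Theorem \ref{ilddhmrs} via the neutrality-invariance observation, so the real care lies in (i) justifying that observation rigorously, namely that the genealogy and $\sigma_n$ are genuinely unaffected by neutral mutations, including under the conditioning on reaching $n$ cells (which is itself measurable with respect to the selective background); and (ii) controlling the simultaneous double limit $n\to\infty$, $|\mathcal{S}_{\text{neut}}|\to\infty$, in particular that the per-pair covariances, which depend on $n$ and $\mu$ but not on $|\mathcal{S}_{\text{neut}}|$, vanish uniformly over the finitely many block-pair types. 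Were one to seek a self-contained argument bypassing Theorem \ref{ilddhmrs}, the crux would instead be to show that the conditional law $\mathbb{P}[B_i^{n,\mu}\in\cdot\mid\mathcal{F}]$ concentrates on the deterministic $\Lambda(j)$, which amounts to a self-averaging property of the empirical clone-size distribution near $\sigma_n$.
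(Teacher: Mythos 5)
Your proposal is correct and follows essentially the same route as the paper: a second-moment (Chebyshev) argument in which the expected spectrum converges and the pairwise covariances vanish, both supplied by Theorem~\ref{ilddhmrs}, followed by the upgrade from atom-wise convergence in probability to convergence in the weak topology. The only difference is one of explicitness: you spell out the neutrality-invariance observation that justifies applying Theorem~\ref{ilddhmrs} (stated for a fixed finite site set) while $|\mathcal{S}_{\text{neut}}|\to\infty$, and the summation of Poisson parameters identifying $\sum_{v\in\mathcal{G}_i}X_v$ with $\Lambda(j)$, both of which the paper uses implicitly.
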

\subsection{Open problems}
To generalise Theorem \ref{dslfm} to a non-zero death rate, selection, and heterogeneous mutation rates, we conjecture the following.
\begin{conj}\label{lmfwsc}Take $n\rightarrow\infty$, $n\mu^{\chi,\psi}(j)\rightarrow\theta^{\chi,\psi}(j)\in[0,\infty)$, and $\mu^{\chi,\psi}(j)|\mathcal{S}(j)|\rightarrow\eta^{\chi,\psi}(j)\in[0,\infty)$, for all $j\in J$ and $\chi,\psi\in\mathcal{N}$ with $\chi\not=\psi$. Then
\[
\sum_{i\in\mathcal{S}}\delta_{n^{-1}B_i^{n,\mu}}\rightarrow\sum_{x\in\mathcal{T}}\sum_{r=1}^{R_x}M_{x,r}\delta_{P_x}
\]
in distribution, where convergence is in the same sense as Theorem \ref{dslfm}. The random variables which appear in the limit are:
\begin{itemize}
\item $(P_x)_{x\in\mathcal{T}\backslash\{\emptyset\}}$ is distributed as in Lemma \ref{btst} (and Theorem \ref{dslfm}), and $P_\emptyset=1$;
\item $(R_x)_{x\in\mathcal{T}\backslash\{\emptyset\}}$ is an i.i.d. family of geometric random variables with parameter $(\alpha(u')-\beta(u'))/(\alpha(u')+\beta(u'))$, and $R_\emptyset$ is independent of $(R_x)$ but with $R_\emptyset \overset{d}{=}R_0-1$;
\item $(P_x)$ is independent of $(R_x)$ if and only if $\beta(u')=0$;
\item
$(M_{x,r})_{x\in\mathcal{T},r\in\mathbb{N}}$ is an i.i.d. family of Poisson random variables with mean $\sum_j\sum_{\psi\not=u(j)}\eta^{u(j),\psi}(j)$, independent of $(P_x,R_x)$.
\end{itemize}
\end{conj}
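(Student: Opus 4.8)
The plan is to mimic the proof of Theorem \ref{dslfm}, replacing the Yule tree by the genealogy of the supercritical birth-death process and tracking one extra layer of randomness, namely the number of cell divisions along each surviving edge. Throughout one works conditioned on the population reaching $n$ cells, which for large $n$ amounts to conditioning on non-extinction. The first step is to decompose the population into its \emph{immortal skeleton}: the cells with an infinite line of descent. An immortal cell's lineage either sheds a finite offshoot clone (a daughter whose descendants all die out) or branches into two immortal lineages, and the branch events make the skeleton a binary tree, which I identify with $\mathcal{T}$. I would then prove the analogue of Lemma \ref{btst}: writing $N_x(t)$ for the number of descendants of the immortal individual born at skeleton vertex $x$, the martingale limits $W_x=\lim_t N_x(t)e^{-(\alpha(u')-\beta(u'))t}$ exist, satisfy $W_x=W_{x0}+W_{x1}$ because offshoot clones contribute nothing in the limit, and have $W_x\mid\{W_x>0\}$ exponential; hence $P_x=W_x/W_\emptyset$ splits uniformly at each branch, recovering the law of Lemma \ref{btst}, with $P_\emptyset=1$.

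Next I would locate the mutations that reach a positive fraction. A mutation landing on an offshoot clone is confined to a finite (subcritical, since it is conditioned on extinction) birth-death clone, so it reaches a vanishing fraction; a uniform tail bound over the many offshoots alive at $\sigma_n$ shows that none is large enough to carry a mutation to a fixed level $a>0$. A mutation occurring at a division on the immortal edge to $x$, on the immortal-daughter side, labels exactly the descendants of that daughter, whose limiting proportion is $P_x$ irrespective of which division on the edge it sits at (the intervening offshoots vanish); this is why every mutation attached to edge $x$ contributes at the common location $P_x$. Along that edge each division is a branch point with probability $(\alpha(u')-\beta(u'))/(\alpha(u')+\beta(u'))$ and otherwise sheds an offshoot, so the number of divisions up to and including the branch is geometric with that parameter, giving $R_x$, while the root edge lacks an initial birth division, giving $R_\emptyset\overset{d}{=}R_0-1$. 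At each such division the number of \emph{distinct} neutral sites mutating on the immortal daughter is, after thinning the independent per-site Bernoulli mutations and letting $|\mathcal{S}_\text{neut}|\to\infty$ with $\mu^{\chi,\psi}(j)|\mathcal{S}(j)|\to\eta^{\chi,\psi}(j)$, asymptotically Poisson with mean $\sum_j\sum_{\psi\neq u(j)}\eta^{u(j),\psi}(j)$, exactly the Poissonisation already used for Theorem \ref{dslfm}. The finitely many selective sites each mutate with probability tending to $0$, and by Theorem \ref{ilddhmrs} their frequencies stay at finitely many cells, so they place no mass at positive fractions in the limit.

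To finish I would prove vague convergence on $(0,1]$ by showing that, for any finite family of disjoint closed intervals in $(0,1]$, the joint law of the interval-counts of $\sum_{i\in\mathcal{S}}\delta_{n^{-1}B_i^{n,\mu}}$ converges to that of $\sum_{x\in\mathcal{T}}\sum_{r=1}^{R_x}M_{x,r}\delta_{P_x}$. Only finitely many $P_x$ exceed any fixed $a>0$, so the limit is a well-defined locally finite point process and each interval sees an effectively finite sum; convergence then follows, as for Theorem \ref{dslfm}, from the joint convergence of the skeleton proportions, the edge-division counts, and the Poissonised mutation counts.

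The main obstacle is pinning down the joint law of $(P_x)$ and $(R_x)$, and in particular the predicted dependence precisely when $\beta(u')>0$. In the clean $t\to\infty$ a.s. picture the uniform splits governing $P_x$ depend only on the subtrees born after each branch point, whereas $R_x$ is determined by the edge preceding it, which would force independence; the genuine correlation must therefore be a finite-$n$ effect, arising from conditioning on hitting exactly $n$ cells, a conditioning that size-biases each lineage by the number of divisions it contributes and couples this to the realised proportions. Capturing that coupling rigorously — rather than passing straight to the a.s. limits, where it is invisible — is the heart of the difficulty, together with the uniform control of offshoot-clone sizes and the verification that selective mutations are negligible at positive fractions; these are exactly the points that keep the statement a conjecture rather than a theorem.
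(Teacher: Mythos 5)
Your route is the same one the paper itself takes in its Appendix heuristic: condition on non-extinction, pass to the immortal skeleton (a Yule tree of rate $\alpha(u')-\beta(u')$ shedding mortal offshoots at rate $2\beta(u')$), identify the skeleton with $\mathcal{T}$, argue that mortal clones and the finitely many selective sites carry no mass at positive fractions, count a geometric number of divisions per skeleton edge, and Poissonise the per-division mutation counts. Up to that point your sketch and the paper's derivation coincide, and both stop short of rigour at the same places (control at the random time $\sigma_n$, uniform negligibility of offshoot clones).

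However, your diagnosis of the conjectured $(P_x)$--$(R_x)$ dependence is wrong, and pursuing it would send the proof in the wrong direction. You claim that in the $t\to\infty$ almost-sure picture the uniform splits depend only on the subtrees born after each branch point, so that independence from $(R_x)$ would be forced, and that the dependence for $\beta(u')>0$ must therefore be a finite-$n$ effect of conditioning on hitting $n$ cells. In fact the dependence is plainly visible in the almost-sure limit, because the subtree rooted at a skeleton vertex \emph{includes its own incoming edge}. Writing $\tilde W_x$ for the local-time martingale limit of the skeleton subtree rooted at $x$, the split at vertex $x$ is $U_{x0}=\tilde W_{x0}/(\tilde W_{x0}+\tilde W_{x1})$, and $\tilde W_{x0}=e^{-(\alpha(u')-\beta(u'))A_{x0}}\left(\tilde W_{x00}+\tilde W_{x01}\right)$, so $U_{x0}$ is, conditionally on everything deeper, a strictly decreasing function of the edge length $A_{x0}$. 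Meanwhile, conditional on $A_{x0}$, the division count $R_{x0}-1$ is Poisson with mean $2\beta(u')A_{x0}$, hence increasing in $A_{x0}$. Both quantities are tied to the same edge length, so $P_{x0}$ (which contains the factor $U_{x0}$) and $R_{x0}$ are negatively correlated whenever $\beta(u')>0$, with no conditioning on $n$ involved; when $\beta(u')=0$ one has $R_x\equiv1$ and independence is trivial. This is exactly the dichotomy asserted in the conjecture's third bullet, and it is a property of the limit object, not a finite-$n$ artifact. The genuinely open work is what the paper also leaves open: a rigorous spinal decomposition at the time $\sigma_n$, uniform control of offshoot-clone sizes, and justification of the Poissonisation jointly over infinitely many sites.
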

See the \nameref{cdca} for a heuristic derivation of Conjecture \ref{lmfwsc}, which is based on a Yule spinal decomposition of the branching process.

Selection in cancer is a major research topic, and there have been attempts to infer selection from cancer genetic data \cite{drivers,matds,tavarerecent}. Pertinently, Theorem \ref{llnws} and Conjecture \ref{lmfwsc} suggest that selection may not be visible in mutation frequency data, which according to \cite{gs} is the case for around $1/3$ of tumours. However we have assumed that the number of selective sites is kept finite. According to \cite{drivers}, there are $3.4\times10^4$ selective sites at which mutations can positively affect growth rate. Thus insight could be gleaned, for example, by taking $|\mathcal{S}_\text{sel}|\rightarrow\infty$ with $\mu^\gamma|\mathcal{S}_\text{sel}|\rightarrow\eta$ for $\gamma\in(0,1]$.

\section{Mutations at finite numbers}\label{proofsecfinite}
In this section we prove results on mutations present in only a finite number of cells. In Subsections \ref{cgss} to \ref{wstarsigma} we prove Theorems \ref{idlda} and \ref{ilddhmrs} (where $\mathcal{S}$ is finite). In Subsection \ref{llnpss} we prove Theorems \ref{llnldd} and \ref{llnws} (where $|\mathcal{S}|$ tends to infinity).
\subsection{Counting genomes}\label{cgss}
 Assuming mutation rates $\mu=(\mu_i^{\chi,\psi})_{i\in\mathcal{S};\chi\psi\in\mathcal{N}}$, write
\bq\label{tbbn}
X^\mu_v(t)
\eq
for the number of cells with genome $v\in\mathcal{G}$ at time $t\geq0$. (Recall that the initial condition is $X_v^\mu(0)=\delta_{u,v}$.) Write
\[
\sigma_n^\mu=\min\left\{t\geq0:\sum_{v\in\mathcal{G}}X^\mu_v(t)=n\right\}
\]
for the time at which $n\in\mathbb{N}$ cells are reached, and use the convention $\min\emptyset=\infty$.

Recall from (\ref{osimg}) that $\mathcal{G}_{i}$ is the subset of genomes with exactly one mutation which is at site $i$. Write
\[
\mathcal{G}_{\geq2}=\left\{v\in\mathcal{G}:|\{i\in\mathcal{S}:v_i\not=u_i\}|\geq2\right\}
\]
for the subset of genomes with at least two mutations.
\begin{theorem}\label{mt}
Take $n\rightarrow\infty$ and $n\mu_i^{\chi,\psi}\rightarrow\theta_i^{\chi,\psi}\in[0,\infty)$ for $i\in\mathcal{S}$ and $\chi,\psi\in\mathcal{N}$ with $\chi\not=\psi$. Then
\[
\left[(X_v^\mu(\sigma^\mu_n))_{v\in\mathcal{G}\backslash\{u\}}|\sigma^\mu_n<\infty\right]\rightarrow(X_v)_{v\in\mathcal{G}\backslash\{u\}}
\]
in distribution, where the $X_v$ are independent and distributed according to:
\begin{itemize}
\item
for $v\in\mathcal{G}_{i}$, $X_v$ has generalised Luria-Delbr{\"u}ck distribution with parameters
\[
\left(\alpha(u')-\beta(u'),\alpha(v'),\beta(v'),\frac{2\alpha(u')\theta^{u_i,v_i}_i}{\alpha(u')-\beta(u')}\right);
\]
\item
for $v\in\mathcal{G}_{\geq2}$, $X_v=0$.
\end{itemize}
\end{theorem}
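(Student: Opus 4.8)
The plan is to treat the population as a supercritical wild-type birth--death core that seeds, at its division events and with vanishing probability, a field of asymptotically independent mutant clones, and then to evaluate everything at $\sigma_n^\mu$. Because $\mu\to0$, I would first couple the wild-type count $X_u^\mu(t)$ to an autonomous birth--death process with rates $\alpha(u')$ and $\beta(u')$; the cells diverted to mutation form an $O(\mu)$ correction that is negligible at $\sigma_n^\mu$. Writing $\lambda=\alpha(u')-\beta(u')$, on $\{\sigma_n^\mu<\infty\}$ (which decreases to the survival event as $n\to\infty$) we have $e^{-\lambda t}X_u^\mu(t)\to W>0$ almost surely. Since the mutant subpopulations will be shown to be negligible against $n$, the total count is dominated by the wild type, so $\sigma_n^\mu$ is pinned down by $X_u^\mu(\sigma_n^\mu)\approx n$, equivalently $We^{\lambda\sigma_n^\mu}\approx n$.

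Conditional on the wild-type trajectory, the creation of type-$v$ cells with $v\in\mathcal{G}_i$ is a thinning of the wild-type divisions: at each division (rate $\alpha(u')$, two daughters) a single type-$v$ daughter appears with probability $\mu_i^{u_i,v_i}\prod_{j\neq i}\mu_j^{u_j,u_j}=\mu_i^{u_i,v_i}(1+o(1))$, while producing two mutants at once, or a genome in $\mathcal{G}_{\geq2}$, costs $O(\mu^2)$. Hence the number $K_v$ of type-$v$ founding events in $[0,\sigma_n^\mu]$ is, to leading order, Poisson with conditional mean $2\alpha(u')\mu_i^{u_i,v_i}\int_0^{\sigma_n^\mu}X_u^\mu(s)\,ds$. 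The decisive computation is that $\int_0^{\sigma_n^\mu}X_u^\mu(s)\,ds\approx\frac{W}{\lambda}e^{\lambda\sigma_n^\mu}\approx\frac{n}{\lambda}$, so the random factor $W$ cancels and the mean converges to the deterministic constant $2\alpha(u')\theta_i^{u_i,v_i}/\lambda$. This cancellation --- a consequence of stopping at fixed population size rather than fixed time --- is exactly what decouples the clones and makes the limit independent across $v$. By the same reasoning the founding times have intensity proportional to $e^{\lambda s}$ on $[0,\sigma_n^\mu]$, so the clone ages $\sigma_n^\mu-s$ converge to i.i.d.\ $\mathrm{Exp}(\lambda)$ variables $\xi_k$, independent of $W$.

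Once founded, a clone evolves (neglecting its own further mutations) as an independent birth--death process with rates $\alpha(v')$ and $\beta(v')$, so a clone of age $\xi_k$ has size $Y_k(\xi_k)$ and $X_v^\mu(\sigma_n^\mu)=\sum_{k=1}^{K_v}Y_k(\xi_k)$ converges to the generalised Luria--Delbr{\"u}ck law with parameters $(\lambda,\alpha(v'),\beta(v'),2\alpha(u')\theta_i^{u_i,v_i}/\lambda)$. Joint convergence and independence over all $v$ and $i$ follow because the associated thinnings act on disjoint daughters (overlaps again costing $O(\mu^2)$) and because the common driving variable $W$ has dropped out of every limiting quantity. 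It remains to show $X_v^\mu(\sigma_n^\mu)\to0$ for $v\in\mathcal{G}_{\geq2}$: I would bound the probability that some lineage accrues a second mutation before $\sigma_n^\mu$. In the neutral case $\mathbb{E}\int_0^{\sigma_n^\mu}X_w^\mu(s)\,ds=O(\sigma_n^\mu)=O(\log n)$ for each single mutant $w$, so the expected number of second-mutation events is $O(\mu\log n)\to0$, whence with probability tending to one no genome in $\mathcal{G}_{\geq2}$ is ever occupied; in the selective case the same conclusion follows after first restricting to the high-probability event that no single-mutant clone has yet reached a positive fraction of the population.

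I expect the main obstacle to be making the Cox-process picture rigorous at the random time $\sigma_n^\mu$ while conditioning on $\{\sigma_n^\mu<\infty\}$: one must control the fluctuations of $e^{-\lambda t}X_u^\mu(t)$ about $W$ sharply enough to justify both $\int_0^{\sigma_n^\mu}X_u^\mu\approx n/\lambda$ and the convergence of the age distribution, and must check that the Poissonisation of $K_v$ and the independence of the clones survive the passage from a deterministic time to $\sigma_n^\mu$. The cleanest route is probably to establish the generalised Luria--Delbr{\"u}ck limits and their independence first at a fixed time $t$ via joint probability generating functions, and only then transfer to $\sigma_n^\mu$ using $We^{\lambda\sigma_n^\mu}\approx n$ and the almost sure martingale convergence; the $W$-cancellation identified above is precisely what lets this transfer yield a limit that is free of $W$.
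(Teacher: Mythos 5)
Your proposal is correct and follows essentially the same route as the paper's proof: a primary (wild-type) core seeding mutant clones as a Cox/Poisson process, the cancellation of the martingale limit $W$ obtained by stopping at fixed population size $n$ (the paper's Lemma \ref{consmt}, which normalises by $Z^n(\tau_n)=n$ where $\tau_n$ is the time the primary population hits $n$, with $\sigma_n-\tau_n\to0$ by Lemma \ref{tnrncon}), negligibility of double and further mutations, and the transfer of conditioning between the survival event and $\{\sigma_n^\mu<\infty\}$ (the paper's Proposition \ref{stwmp}). The points you flag as the main technical obstacles—controlling the fluctuations at the random time $\sigma_n^\mu$ and making the Poissonisation and conditioning survive that passage—are exactly where the paper spends its effort, via an explicit coupling construction and Skorokhod representation.
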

Theorem \ref{mt} says that cells with at least two mutated sites are non-existent. However simulations and biology tell the opposite story, that cells typically have many mutated sites. This apparent contradiction comes because, while the population size and mutation rate reciprocal converge to infinity, the number of sites is kept finite. So the result only makes sense if one is considering a small subset of the billions of sites.

The mutation frequencies are
\[
\left(B_i^{n,\mu}\right)_{i\in\mathcal{S}}=\left(\sum_{\substack{v\in\mathcal{G}\\v_i\not= u_i}}X_v^{\mu}(\sigma_n^{\mu})\right)_{i\in\mathcal{S}}
\]
conditional on the event $\{\sigma_n^\mu<\infty\}$. Therefore Theorem \ref{mt}, via the continuous mapping theorem, implies Theorems \ref{idlda} and \ref{ilddhmrs}.

Theorem \ref{mt}'s proof is rather lengthy. So, before jumping in with the technical details, let's give an overview.

In Subsection \ref{constru} we present a construction of $(X_v^{\mu}(\sigma^{\mu}_n))_{v\in\mathcal{G}}$. The construction will ultimately illuminate the importance of various subpopulations and the mutations between them. Of particular importance is the \textit{primary} subpopulation, which is defined as those unmutated cells which have an unbroken lineage of unmutated cells going back to the initial cell. The primary subpopulation, in the limit, grows deterministically and exponentially.

In Subsections \ref{dmecbn} and \ref{asco} we show that several events are negligible: primary cells divide to give two mutant daughters; primary cells mutate at multiple sites at once; mutated cells receive further mutations, including backwards mutations. With these events neglected, the situation is pleasingly simplified. The primary subpopulation seeds, as a Poisson process with exponential intensity, single-site mutant subpopulations. The mutant subpopulations grow without further mutations, independently. This gives independent Luria-Delbr{\"u}ck distributions. Finally, in Subsection \ref{wstarsigma} we condition on the event that the population reaches $n$ cells.

Although the proof's overview may sound simple, the details are less so. The reader will find that the random time $\sigma_n^{\mu}$ shoulders a large responsibility for complexity.

\subsection{Construction}\label{constru}

Additional notation to be used in the proof: for $v\in\mathcal{G}$,
\[
e_v=(\delta_{v,w})_{w\in\mathcal{G}}
\]
is the element of $(\mathbb{N}_0)^\mathcal{G}$ denoting that there is one genome $v$ and zero other genomes. 

Let 
\[
\left[\mu_n\right]_{n\in\mathbb{N}}=\left[\left(\mu_{n,i}^{\chi,\psi}\right)_{i\in\mathcal{S};\chi,\psi\in\mathcal{N}}\right]_{n\in\mathbb{N}}
\] be a sequence of mutation rates. Assume that
\[
\lim_{n\rightarrow\infty}n\mu^{\chi,\psi}_{n,i}=\theta_i^{\chi,\psi}\in[0,\infty)
\]
for $\chi\not=\psi$.

Fix $n\in\mathbb{N}$. For $v,w\in\mathcal{G}$, write
\bq\label{probdcg}
p_n(v,w)=\prod_{i\in\mathcal{S}}\mu_{n,i}^{u_i,v_i}\mu_{n,i}^{u_i,w_i}
\eq
for the probability that a cell with genome $u$ which divides, gives daughters with genomes $v,w$ (which implies that we have assumed an ordering of the daughters - the first has genome $v$ and the second has genome $w$). Now the construction of $(X_v^{\mu_n}(\sigma^{\mu_n}_n))_{v\in\mathcal{G}}$ begins. For the foundational step, introduce the following random variables on a fresh probability space.
\begin{enumerate}
\item
\[(Z^n(t))_{t\geq0}\]
is a birth-death branching process with birth and death rates
\[
\alpha_n:=\alpha(u')p_n(u,u)
\]
and 
\[
\beta_n:=\beta(u')+\alpha(u')\sum_{v,w\in\mathcal{G}\backslash\{u\}}p_n(v,w).
\]
The initial condition $Z^n(0)=1$ is assumed.
\item For $j\in\mathbb{N}$,
\[E_j^n
\] 
are $\{\emptyset\}\cup(\mathcal{G}\backslash\{u\})^2$-valued random variables, with
\[
\mathbb{P}[E^n_j=\emptyset]=\frac{\beta(u')}{\beta_n},
\]
and for $v,w\in\mathcal{G}\backslash\{u\}$
\[
\mathbb{P}[E^n_j=(v,w)]=\frac{\alpha(u')p_n(v,w)}{\beta_n}.
\]
\item
For $v\in\mathcal{G}\backslash\{u\}$ and $j\in\mathbb{N}$,
\[
\mathcal{Y}^n_{v,j}(\cdot)\]
is a $(\mathbb{N}_0)^\mathcal{G}$-valued Markov process, with the same transition rates as $(X^{\mu_n}_x(\cdot))_{x\in\mathcal{G}}$ (defined in (\ref{tbbn})) and with the initial condition $\mathcal{Y}^n_{v,j}(0)=e_v$.
\item For $v,w\in\mathcal{G}\backslash\{u\}$ and $j\in\mathbb{N}$,
\[\mathcal{Y}^n_{v,w,j}(\cdot)\]
is a $(\mathbb{N}_0)^\mathcal{G}$-valued Markov process, with the same transition rates as $(X^{\mu_n}_x(\cdot))_{x\in\mathcal{G}}$ and with the initial condition $\mathcal{Y}^n_{v,w,j}(0)=e_v+e_w$.
\item For $v\in\mathcal{G}$,
\[(N_v(t))_{t\geq0}\]
are Poisson counting processes with rate $1$.

\end{enumerate}
The random variables
\bq
\left[Z^n(\cdot),E^n_j,\mathcal{Y}^n_{v,j}(\cdot),\mathcal{Y}^n_{v,w,j}(\cdot),N_v(\cdot)\right]\label{randomv}
\eq are assumed to be independent ranging over $v,w,j$.

Let's explain the meaning of the random variables introduced so far. $Z^n(\cdot)$ represents the `primary' subpopulation - which we define as the type $u$ cells whose ancestors are all of type $u$. That is to say, there is an unbroken lineage of type $u$ cells between any primary cell and the initial cell. The rate, $\alpha_n$, that a primary cell gives birth to another primary cell, is simply the type $u$ division rate multiplied by the probability that no mutation occurs in either daughter cell. The rate, $\beta_n$, that a primary cell is removed, is the rate that a type $u$ cell dies plus the rate that a type $u$ cell divides to produce two mutant daughter cells.

The $E_j^n$ describe what happens at the $j$th downstep in the primary subpopulation trajectory. If $E_j^n=\emptyset$, then the downstep is a primary cell death. If $E_j^n=(v,w)$, then the downstep is a primary cell dividing to produce two mutant daughter cells of types $v$ and $w$.

Sometimes a primary cell divides to produce one primary cell and one mutant cell of type $v$. For the $j$th time that this occurs, $\mathcal{Y}^n_{v,j}(t)$ is the vector which counts the cells with each genome amongst the descendants of that type $v$ cell, time $t$ after its birth.

Sometimes a primary cell divides to produce two mutant cells of types $v$ and $w$. For the $j$th time that this occurs, $\mathcal{Y}^n_{v,w,j}(t)$ is the vector which counts the cells with each genome amongst the descendants of the two mutants time $t$ after their birth.

The $N_v(\cdot)$ will soon be rescaled in time to represent the times at which primary cells divide to produce one primary cell and one cell with genome $v$.

The random variables introduced so far, seen together in (\ref{randomv}), provide all the necessary ingredients for the construction of $(X_v^{\mu_n}(\sigma_n^{\mu_n}))_{v\in\mathcal{G}}$. Now we build upon these founding objects, defining further random variables.

\begin{enumerate}[resume]
\item
For $v\in\mathcal{G}\backslash\{u\}$ and $t\geq0$,
\bq\label{knzd}
K^n_v(t)=N_v\left(2p_n(u,v)\alpha(u')\int_0^tZ^n(s)ds\right).
\eq
\item
For $j\in\mathbb{N}$ and $v\in\mathcal{G}\backslash\{u\}$,
\[
T^n_{v,j}=\min\{t\geq0:K^n_v(t)=j\}.
\]
\item
\[
S^n_1=\min\{t\geq0:Z^n(t)-Z^n(t^-)=-1\},
\]
and then for $j>1$, recursively,
\[
S^n_j=\min\{t>S^n_{j-1}:Z^n(t)-Z^n(t^-)=-1\}.
\]
(Here $Z^n(t^-):=lim_{s\uparrow t}Z^n(s)$.)

\item For $v,w\in\mathcal{G}\backslash\{u\}$,
\[
T^n_{v,w,1}=\min\{S^n_j:j\in\mathbb{N},E^n_j=(v,w)\},
\]
and then for $j>1$, recursively,
\[
T^n_{v,w,j}=\min\{S^n_j:j\in\mathbb{N},S^n_j>T^n_{v,w,j-1},E^n_j=(v,w)\}.
\]
\item For $v,w\in\mathcal{G}\backslash\{u\}$, and $t\geq0$,
\ba
K^n_{v,w}(t)=|\{j\in\mathbb{N}:T^n_{v,w,j}\leq t\}|.
\ea
\end{enumerate}
Let's explain the meaning of the new random variables. The $K^n_v(t)$ specify the number of times before time $t$ that primary cells have divided to produce one primary cell and one type $v$ cell. Let's check that this interpretation makes sense. Conditioned on the trajectory of $Z^n(\cdot)$, $K_v^n(\cdot)$ is certainly a Markov process, and increases by $1$ at rate $2p_n(u,v)\alpha(u')Z^n(t)$ - i.e. the rate at which primary cells divide multiplied by the probability that exactly one daughter cell is primary and one is type $v$.

$S^n_j$ is the time of the $j$th downstep of the primary subpopulation size. Then $T^n_{v,w,j}$ is the time of the $j$th primary cell division which produces two mutant cells of types $v$ and $w$. Note that a primary cell division which produces two mutant cells neccessarily coincides with a downstep in the primary subpopulation size. $K^n_{v,w}(t)$ is the number of primary cell divisions before time $t$ which produce cells of types $v$ and $w$.

The reader might question why we have decided to construct the `single mutation' times and the `double mutation' times so differently. The reason for the difference is that single and double mutations will play different roles in the limit, and require different techniques for the proof.

At last the construction reaches its d\'enouement.
\begin{enumerate}[resume]
\item For $t\geq0$,
\ba
\mathcal{X}^n(t)&=&Z^n(t)e_u\\
&&+\sum_{v\in\mathcal{G}\backslash\{u\}}\sum_{j=1}^{K^n_v(t)}\mathcal{Y}^n_{v,j}(t-T^n_{v,j})\label{repr}\\
&&+\sum_{v,w\in\mathcal{G}\backslash\{u\}}\sum_{j=1}^{K^n_{v,w}(t)}\mathcal{Y}^n_{v,w,j}(t-T^n_{v,w,j}).
\ea
\item
\[
\sigma_n=\min\{t\geq0:|\mathcal{X}^n(t)|=n\},
\]
where $|\cdot|$ is the $l_1$-norm on $\mathbb{R}^\mathcal{G}$.
\end{enumerate}
Note that $\mathcal{X}^n(\cdot)$ has the same distribution as $(X_v^{\mu_n}(\cdot))_{v\in\mathcal{G}}$; both objects are Markov processes on $(\mathbb{N}_0)^{\mathcal{G}}$, whose initial conditions and transition rates coincide. 

Next we will show that certain elements of the construction converge in distribution. Convergence will sometimes be in the Skorokhod sense. For notation, write $\mathbb{D}(I,R)$ for the space of c\`adl\`ag functions from an interval $I\subset[0,\infty)$ to a metric space $R$ (which will always be complete and separable). The space $\mathbb{D}(I,R)$ is equipped with the standard Skorokhod topology. Less standard, we will also consider the space $\mathbb{D}([0,\infty],R)$, which is defined by identification with $\mathbb{D}([0,1],R)$. Let's be specific. Define $\omega:[0,1]\rightarrow[0,\infty]$ by
$$
\omega:s\mapsto\begin{cases}-\log(1-s),\quad&s\in[0,1);\\
\infty,\quad&s=1.\end{cases}
$$
For $f\in\mathbb{D}([0,1],R)$ define $\hat{w}(f)=f\circ\omega^{-1}$. Then the space $\mathbb{D}([0,\infty],R)$ is the image of $\hat{w}$ equipped with the induced topology. Note that according to this definition,  for all $z\in\mathbb{D}([0,\infty],R)$, $\lim_{t\rightarrow\infty}z(t)$ exists. In fact we will only consider $z\in\mathbb{D}([0,\infty],R)$ with $\lim_{t\rightarrow\infty}z(t)=z(\infty)$.
\begin{lemma}\label{sczti}
As $n\rightarrow\infty$,
\[
(e^{-\lambda_n t}Z^n(t))_{t\in[0,\infty]}\rightarrow(e^{-\lambda t}Z^*(t))_{t\in[0,\infty]}
\]
in distribution, on the space $\mathbb{D}([0,\infty],\mathbb{R})$. Here
\[
\lambda_n=\alpha_n-\beta_n
\]
is the growth rate of the primary cell population;
\[
\lambda=\alpha(u')-\beta(u')
\]
is the large $n$ limit of $\lambda_n$; and $Z^*(\cdot)$ is a birth-death branching process with birth and death rates $\alpha(u')$ and $\beta(u')$.
\end{lemma}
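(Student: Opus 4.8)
The plan is to split the argument into three parts: convergence of the rate parameters, convergence of the time-rescaled processes on compact time intervals, and a uniform martingale tail estimate that licenses the compactification at $t=\infty$.

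First I would check the elementary rate limits. Since $n\mu_{n,i}^{\chi,\psi}\to\theta_i^{\chi,\psi}<\infty$ for $\chi\neq\psi$, every off-diagonal mutation probability is $O(1/n)$, so $\mu_{n,i}^{u_i,u_i}=1-\sum_{\psi\neq u_i}\mu_{n,i}^{u_i,\psi}\to1$ and hence $p_n(u,u)\to1$, giving $\alpha_n\to\alpha(u')$. For $v,w\neq u$ each product $p_n(v,w)$ carries at least two off-diagonal factors, so $\sum_{v,w\neq u}p_n(v,w)=O(n^{-2})$ and $\beta_n\to\beta(u')$. Consequently $\lambda_n\to\lambda=\alpha(u')-\beta(u')>0$, so for all large $n$ the process $Z^n$ is supercritical with rates bounded away from criticality, uniformly in $n$.

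Second, I would establish $(Z^n(t))_{t\geq0}\to(Z^*(t))_{t\geq0}$ in distribution on $\mathbb{D}([0,\infty),\mathbb{R})$. As $Z^n$ and $Z^*$ are birth-death processes whose per-capita rates $(\alpha_n,\beta_n)$ converge to $(\alpha(u'),\beta(u'))$, this is standard; one clean route, matching the Poisson representation already used in the construction, is to realise all the $Z^n$ and $Z^*$ as solutions of
$$Z^n(t)=1+\Pi_+\!\left(\alpha_n\!\int_0^t Z^n(s^-)\,ds\right)-\Pi_-\!\left(\beta_n\!\int_0^t Z^n(s^-)\,ds\right)$$
driven by a common pair of independent unit-rate Poisson processes $\Pi_\pm$. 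Since on a compact interval there are almost surely finitely many jumps, convergence of the rates forces convergence of the driving integrals and hence almost-sure Skorokhod convergence of the trajectories. Multiplying by the deterministic factor $e^{-\lambda_n t}$, which converges to $e^{-\lambda t}$ uniformly on compacts, the continuous mapping theorem gives $M_n:=e^{-\lambda_n\cdot}Z^n\to M^*:=e^{-\lambda\cdot}Z^*$ in $\mathbb{D}([0,\infty),\mathbb{R})$.

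Third --- and this is the crux --- I would promote this to convergence on $\mathbb{D}([0,\infty],\mathbb{R})$, which additionally demands control of the process near $t=\infty$ and identification of the boundary value $W_n:=\lim_{t\to\infty}M_n(t)$. Here I use that $M_n$ is a nonnegative martingale with predictable quadratic variation $\langle M_n\rangle_t=(\alpha_n+\beta_n)\int_0^t e^{-2\lambda_n s}Z^n(s)\,ds$, whence $\mathbb{E}[\langle M_n\rangle_\infty]=(\alpha_n+\beta_n)/\lambda_n$ is bounded uniformly in large $n$. Thus $M_n$ is $L^2$-bounded, converges almost surely and in $L^2$ to $W_n$, and Doob's maximal inequality yields
$$\mathbb{E}\Big[\sup_{t\geq T}|M_n(t)-W_n|^2\Big]\leq C\,\mathbb{E}\big[\langle M_n\rangle_\infty-\langle M_n\rangle_T\big]=C\,\frac{\alpha_n+\beta_n}{\lambda_n}\,e^{-\lambda_n T}$$
for a universal constant $C$. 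This bound is uniform in large $n$ and vanishes as $T\to\infty$, which is precisely the oscillation control near the boundary point required for tightness in $\mathbb{D}([0,\infty],\mathbb{R})$. Combined with the finite-dimensional convergence from the second step and the uniform $L^2$-approximation $M_n(T)\to W_n$, a standard truncation argument (Billingsley's theorem on convergence together with approximating sequences) gives joint convergence of $(M_n(t_1),\dots,M_n(t_k),W_n)$ to the corresponding functionals of $M^*$ and $W^*=\lim_{t\to\infty}M^*(t)$, hence convergence in $\mathbb{D}([0,\infty],\mathbb{R})$. The main obstacle I anticipate is exactly this last step: ordinary Skorokhod convergence on $[0,\infty)$ is silent about behaviour at infinity, and $W_n$ is a genuinely new random variable (with an atom at $0$ from extinction); the uniform $L^2$ tail estimate is what bridges the gap, so the technical heart of the proof is the quadratic-variation computation and the uniformity of its constants in $n$.
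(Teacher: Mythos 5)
Your proposal is correct, but it follows a genuinely different route from the paper. The paper proves finite-dimensional convergence by citing the explicit transition probabilities of the rescaled processes (which depend continuously on the birth and death rates), and then establishes tightness on $\mathbb{D}([0,\infty],\mathbb{R})$ in one stroke by applying Aldous's criterion to the time-changed process $M^n(s)=e^{-\lambda_n\omega(s)}Z^n(\omega(s))$ on $[0,1]$, computing conditional second moments of increments at stopping times via the martingale $M^n(s)^2+\frac{\alpha_n+\beta_n}{\lambda_n}e^{-\lambda_n\omega(s)}M^n(s)$. You instead couple all the processes through common unit-rate Poisson drivers to get almost-sure Skorokhod convergence on compacts (this step is slightly hand-wavy as written --- ``convergence of the driving integrals'' for a self-referential time-change equation really requires an induction over the finitely many jump times of $Z^*$ on $[0,T]$, but that is standard), and then handle the boundary point $t=\infty$ separately by a uniform-in-$n$ Doob $L^2$ tail estimate built from the predictable quadratic variation, using $\mathbb{E}\left[\langle M_n\rangle_\infty-\langle M_n\rangle_T\right]=\frac{\alpha_n+\beta_n}{\lambda_n}e^{-\lambda_n T}$, followed by a truncation (convergence-together) argument to identify the limit of $W_n$. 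The two approaches exploit the very same second-moment/martingale structure --- the paper's Aldous computation is essentially your quadratic-variation bound evaluated at stopping times --- but they package it differently: your route yields a stronger coupling statement on compacts and makes the control of the terminal value $W_n\to W^*$ explicit (the paper delegates this to classical branching process theory in Remark \ref{marlim}), whereas the paper's route is shorter, avoids constructing any coupling, and treats the whole compactified time axis with a single tightness criterion. Both are valid proofs of the lemma.
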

\begin{remark}\label{marlim}
The processes of Lemma \ref{sczti} are defined at $t=\infty$. For $n$ large enough that $\lambda_n>0$,
\ba
e^{-\lambda_n\infty}Z^n(\infty):=\lim_{t\rightarrow\infty}e^{-\lambda_n t}Z^n(t)=W^n,
\ea
and
\ba
e^{-\lambda\infty}Z^*(\infty):=\lim_{t\rightarrow\infty}e^{-\lambda t}Z^*(t)=W^*.
\ea
The limits $W^n$ and $W^*$ exist and are finite almost surely, which is a classic branching process result \cite{an}.
\end{remark}
\begin{proof}[Proof of Lemma \ref{sczti}]
The transition probabilities of the $\left(e^{-\lambda_n t}Z^n( t)\right)_{t\in[0,\infty]}$ and $\left(e^{-\lambda  t}Z^*( t)\right)_{t\in[0,\infty]}$ are well-known \cite{an,rdbp,ca}. These transition probabilities depend continuously on the birth and death rates, so finite-dimensional convergence is given. To show tightness we shall use Aldous's criterion \cite{asc}. Extend $ \omega(s)$ to $s\in[0,2]$ by setting $ \omega(s)= \omega(1)=\infty$ for $s\in[1,2]$. Write
\ba
M^n(s)&=&e^{-\lambda_n \omega(s)}Z^n( \omega(s))
\ea
for $s\in[0,2]$. Let $(\rho_n)$ be a sequence of $[0,1]$-valued stopping times with respect to $(M^n(\cdot))$. Let $(\delta_n)$ be a positive deterministic sequence converging to zero. Then, writing $\mathcal{F}_{\rho_n}$ for the sigma-algebra generated by $M^n(\cdot)$ up to time $\rho_n$,
\ba
\mathbb{E}[(M^n(\rho_n+\delta_n)-M^n(\rho_n))^2|\mathcal{F}_{\rho_n}]&=&\mathbb{E}[M^n(\rho_n+\delta_n)^2|\mathcal{F}_{\rho_n}]-M^n(\rho_n)^2\\
&=&\left(e^{-\lambda_n\omega(\rho_n)}-e^{-\lambda_n\omega(\rho_n+\delta_n)}\right)\\
&&\quad\quad\times\frac{\alpha_n+\beta_n}{\lambda_n}M^n(\rho_n),
\ea
where the last equality comes thanks to the fact that
\[
M^n(s)^2+\frac{\alpha_n+\beta_n}{\lambda_n}e^{-\lambda_n\omega(s)}M^n(s)
\]
is a martingale. But
\ba
e^{-\lambda_n\omega(\rho_n)}-e^{-\lambda_n\omega(\rho_n+\delta_n)}&=&(1-\rho_n)^{\lambda_n}-1_{\{1-\rho_n-\delta_n\geq0\}}(1-\rho_n-\delta_n)^{\lambda_n}\\
&\leq&\max\{\lambda_n\delta_n,\delta_n^{\lambda_n}\}.
\ea
Now,
\ba
\mathbb{E}[(M^n(\rho_n+\delta_n)-M^n(\rho_n))^2]&\leq&\max\{\lambda_n\delta_n,\delta_n^{\lambda_n}\}\frac{\alpha_n+\beta_n}{\lambda_n}\mathbb{E}M^n(\rho_n)\\
&=&\max\{\lambda_n\delta_n,\delta_n^{\lambda_n}\}\frac{\alpha_n+\beta_n}{\lambda_n}.\ea
Take $n\rightarrow\infty$ to see that $M^n(\rho_n+\delta_n)-M^n(\rho_n)$ converges to zero in $L_2$ and hence in probability, thus satisfying Aldous's criterion.
\end{proof}
\begin{lemma}\label{easys}
As $n\rightarrow\infty$,
\[
\mathcal{Y}^n_{v,j}(\cdot)\rightarrow Y_{v,j}(\cdot)e_v
\]
in distribution, where $Y_{v,j}(\cdot)$ is a birth-death branching process with birth and death rates $\alpha(v')$ and $\beta(v')$ and initial condition $Y_{v,j}(0)=1$. Convergence is on the space $\mathbb{D}([0,\infty),\mathbb{R}^\mathcal{G})$.
\begin{proof}
It is enough to note that the transition rates converge (see for example page 262 of \cite{ek}).
\end{proof}
\end{lemma}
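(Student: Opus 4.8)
The plan is to obtain the convergence in the Skorokhod space $\mathbb{D}([0,\infty),\mathbb{R}^\mathcal{G})$ from convergence of the infinitesimal generators, which is the content of the result cited on page 262 of \cite{ek}. Since $\mathcal{S}$ is finite in this section, $\mathcal{G}=\mathcal{N}^\mathcal{S}$ is a finite set and each $\mathcal{Y}^n_{v,j}(\cdot)$ is a Markov jump process on the countable, finite-dimensional state space $(\mathbb{N}_0)^\mathcal{G}$. First I would write down its generator $\mathcal{A}_n$ explicitly on bounded functions $f:(\mathbb{N}_0)^\mathcal{G}\to\mathbb{R}$: from a configuration $m=(m_w)_{w\in\mathcal{G}}$, each of the $m_w$ cells of genome $w$ dies at rate $\beta(w')$ and divides at rate $\alpha(w')$, the dividing cell being replaced by an ordered pair of daughters with genomes $(\psi,\psi')$ with probability $\prod_{i\in\mathcal{S}}\mu_{n,i}^{w_i,\psi_i}\mu_{n,i}^{w_i,\psi_i'}$. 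The assumption $n\mu_{n,i}^{\chi,\psi}\to\theta_i^{\chi,\psi}<\infty$ forces $\mu_{n,i}^{\chi,\psi}\to0$ for $\chi\ne\psi$, hence $\mu_{n,i}^{\chi,\chi}\to1$ and every off-diagonal daughter probability vanishes. Thus $\mathcal{A}_nf\to\mathcal{A}f$ pointwise, where $\mathcal{A}$ generates the process started from $e_v$ that, on the invariant ray $\{ke_v:k\in\mathbb{N}_0\}$, undergoes mutation-free binary division at rate $\alpha(v')$ and death at rate $\beta(v')$ — exactly the law of $Y_{v,j}(\cdot)e_v$.

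Second, I would feed this generator convergence into the cited theorem. The limiting process is non-explosive because its total birth rate is only linear in the population size, so the standard theory applies and the functions supported on finitely many states form a core; moreover the initial conditions match exactly, $\mathcal{Y}^n_{v,j}(0)=e_v$ for every $n$. With finite-dimensional convergence supplied by the generators, the one remaining ingredient is tightness.

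The step I expect to be the main obstacle — though it is bookkeeping rather than genuine difficulty — is the compact containment condition underlying tightness on the infinite state space $(\mathbb{N}_0)^\mathcal{G}$. Here it reduces to a uniform-in-$n$ bound on the total population $|\mathcal{Y}^n_{v,j}(t)|$, which is dominated by a pure-birth (Yule) process of per-capita rate $\max_w\alpha(w')$; this yields $\mathbb{E}|\mathcal{Y}^n_{v,j}(t)|\le e^{t\max_w\alpha(w')}$, and since configurations of bounded total mass form a finite set, the required compact containment follows. Combining this tightness with the finite-dimensional convergence gives the asserted Skorokhod convergence, completing the proof.
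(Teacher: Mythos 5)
Your proposal is correct and follows essentially the same route as the paper: the paper's one-line proof simply invokes convergence of the transition rates together with the result on page 262 of \cite{ek}, which is exactly the generator-convergence theorem you apply. The additional details you supply (the explicit generator, the vanishing of off-diagonal mutation probabilities, and compact containment via domination by a Yule process at rate $\max_{w}\alpha(w')$, a finite maximum since $\mathcal{G}$ is finite) are precisely the bookkeeping the paper leaves implicit, and they are all sound.
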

\begin{lemma}\label{skcodm}As $n\rightarrow\infty$,
\[
\left(\sum_{j\leq kn^{3/2}}1_{\{E_j^n\not=\emptyset\}}\right)_{k\in\mathbb{N}}\rightarrow(0)_{k\in\mathbb{N}}
\]
in distribution, on the space $\mathbb{R}^\mathbb{N}$.
\begin{proof}
Note that
\[
\lim_{n\rightarrow\infty}n^{3/2}\mathbb{P}[E^n_j\not=\emptyset]=0.
\]
Then
\ba
\mathbb{P}\left[\sum_{j\leq kn^{3/2}}1_{\{E_j^n\not=\emptyset\}}=0;k=1,..,r\right]&=&\mathbb{P}\left[E_j^n=\emptyset;j\leq rn^{3/2}\right]\\
&=&\left(1-\mathbb{P}\left[E_j^n\not=\emptyset\right]\right)^{\lfloor rn^{3/2}\rfloor}\\
&\rightarrow&1.
\ea
\end{proof}
\end{lemma}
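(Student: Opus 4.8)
The plan is to exploit that the limit $(0)_{k\in\mathbb{N}}$ is deterministic, so that convergence in distribution on $\mathbb{R}^\mathbb{N}$ with the product topology is equivalent to convergence in probability, and hence to coordinatewise convergence in probability. Writing $S_{n,k}=\sum_{j\leq kn^{3/2}}1_{\{E^n_j\neq\emptyset\}}$ for the $k$th coordinate, note that the coordinates are $\mathbb{N}_0$-valued and nondecreasing in $k$, so the event $\{S_{n,r}=0\}$ forces $\{S_{n,k}=0\}$ for every $k\leq r$. It therefore suffices to prove that, for each fixed $r\in\mathbb{N}$, $\mathbb{P}[S_{n,r}=0]\to1$ as $n\to\infty$.

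First I would estimate the single-trial probability $p_n:=\mathbb{P}[E^n_j\neq\emptyset]$. By definition the event $\{E^n_j\neq\emptyset\}$ records a primary cell dividing into two daughters that are \emph{both} mutated, so $p_n=\alpha(u')\beta_n^{-1}\sum_{v,w\in\mathcal{G}\backslash\{u\}}p_n(v,w)$. Each summand $p_n(v,w)=\prod_{i\in\mathcal{S}}\mu_{n,i}^{u_i,v_i}\mu_{n,i}^{u_i,w_i}$ contains at least one off-diagonal mutation factor coming from $v$ and one coming from $w$ (since $v\neq u$ and $w\neq u$), and every off-diagonal factor is $O(n^{-1})$ by the assumption $n\mu_{n,i}^{\chi,\psi}\to\theta_i^{\chi,\psi}\in[0,\infty)$, while the diagonal factors tend to $1$. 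Hence $p_n(v,w)=O(n^{-2})$, and as $\mathcal{G}$ is finite the sum over $v,w$ is $O(n^{-2})$. Dividing by $\beta_n$, which stays bounded away from $0$, gives $p_n=O(n^{-2})$, and in particular $n^{3/2}p_n\to0$.

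The conclusion is then immediate from the independence of the $E^n_j$:
\[
\mathbb{P}[S_{n,r}=0]=\mathbb{P}[E^n_j=\emptyset\ \text{for all}\ j\leq rn^{3/2}]=(1-p_n)^{\lfloor rn^{3/2}\rfloor},
\]
and since $\lfloor rn^{3/2}\rfloor\,p_n\leq rn^{3/2}p_n\to0$, we obtain $(1-p_n)^{\lfloor rn^{3/2}\rfloor}\to1$. Letting $r$ range over $\mathbb{N}$ yields coordinatewise convergence in probability to $0$, and hence the claimed convergence in distribution on $\mathbb{R}^\mathbb{N}$.

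I expect the only genuine content to be the order-of-magnitude bound $p_n=O(n^{-2})$: the key point is that a double mutation is \emph{doubly} rare, carrying two independent mutation factors, and this is precisely what lets the vanishing probability beat the $n^{3/2}$ indices under consideration (the exponent $3/2$ sitting comfortably below $2$). The one place demanding care is the denominator $\beta_n=\beta(u')+\alpha(u')\sum_{v,w}p_n(v,w)$, since the estimate uses $\beta_n$ bounded away from zero, which holds when $\beta(u')>0$. When $\beta(u')=0$ every downstep of the primary population is itself a double mutation, so $p_n=1$ and the literal statement breaks; but in that regime $\beta_n=O(n^{-2})$ makes downsteps before $\sigma_n$ negligible (the expected number is $O(n^{-1})$), so I would absorb this degenerate case by a separate direct argument rather than through the present lemma.
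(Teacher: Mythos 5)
Your proposal is correct and follows essentially the same route as the paper's own proof: the paper likewise reduces the claim to
$\mathbb{P}\left[E_j^n=\emptyset;\ j\leq rn^{3/2}\right]=\left(1-\mathbb{P}\left[E_j^n\not=\emptyset\right]\right)^{\lfloor rn^{3/2}\rfloor}\rightarrow1$,
except that it merely asserts the key estimate $n^{3/2}\mathbb{P}[E_j^n\not=\emptyset]\rightarrow0$, which you actually justify via the double-rarity bound $p_n(v,w)=O(n^{-2})$ and $\beta_n\geq\beta(u')>0$. Your closing caveat is genuinely well spotted and goes beyond the paper: when $\beta(u')=0$ (which the general model permits, and which is exactly the regime of the pure Yule setting behind Theorem \ref{idlda}) one has $\mathbb{P}[E_j^n=\emptyset]=\beta(u')/\beta_n=0$, so $\mathbb{P}[E_j^n\not=\emptyset]=1$ and both the asserted estimate and the lemma itself fail as stated; in that degenerate case the downstream conclusion that double mutations are negligible (Lemma \ref{kyztz}) must be recovered by a separate argument, for instance via $K^n_{v,w}(\tau_n)\leq D_n$ combined with the observation that downsteps themselves are rare because $\beta_n=O(n^{-2})$, rather than through the present lemma.
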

\begin{remark}The number $3/2$ which appears in Lemma \ref{skcodm} is not special. It only matters that $3/2\in(1,2)$. The relevance of the result will be seen in Section \ref{dmecbn}.
\end{remark}
\begin{lemma}\label{jointconvergencefors}
As $n\rightarrow\infty$,
$$
\left[(e^{-\lambda_nt}Z^n(t))_{t\in[0,\infty]},\left((\mathcal{Y}^n_{v,j}(t))_{t\in[0,\infty)}\right)_{v\in\mathcal{G}\backslash\{u\},j\in\mathbb{N}},\left(\sum_{j\leq kn^{3/2}}1_{\{E_j^n\not=\emptyset\}}\right)_{k\in\mathbb{N}}\right]
$$
converges in distribution to
$$
\left[(e^{-\lambda t}Z^*(t))_{t\in[0,\infty]};\left((Y_{v,j}(t)e_v)_{t\in[0,\infty)}\right)_{v\in\mathcal{G}\backslash\{u\},j\in\mathbb{N}},\left(0\right)_{k\in\mathbb{N}}\right]
$$
on
$$\mathbb{D}([0,\infty],\mathbb{R})\times\mathbb{D}\left([0,\infty),\mathbb{R}^\mathcal{G}\right)^{\mathcal{G}\backslash\{u\}\times\mathbb{N}}\times\mathbb{R}^\mathbb{N},
$$
where the $Z^*(\cdot)$ and $Y_{v,j}(\cdot)$ are independent.
\begin{proof}
The convergence seen in Lemmas \ref{sczti}, \ref{easys}, and \ref{skcodm} is joint convergence over the product space due to independence.
\end{proof}
\end{lemma}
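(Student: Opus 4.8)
The plan is to exploit the independence built into the construction: the three objects appearing in the statement are measurable functionals of three \emph{mutually independent} families of random variables, so their joint law is a product measure for every $n$, and joint convergence then reduces to the three marginal convergences already in hand.

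First I would pin down the dependence structure. The process $(e^{-\lambda_n t}Z^n(t))_{t\in[0,\infty]}$ is a functional of $Z^n(\cdot)$ alone; the array $\left((\mathcal{Y}^n_{v,j}(t))_{t\in[0,\infty)}\right)_{v,j}$ is a functional of the family $(\mathcal{Y}^n_{v,j}(\cdot))_{v,j}$ alone; and $\left(\sum_{j\leq kn^{3/2}}1_{\{E^n_j\not=\emptyset\}}\right)_{k}$ is a functional of $(E^n_j)_j$ alone. By the independence postulated for the family in (\ref{randomv}), these three source families are mutually independent, whence the three displayed objects are mutually independent under the law of the construction for each fixed $n$. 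Thus the joint distribution factorises as the product of the three marginal distributions.

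Second I would invoke the standard fact that, on a product $S_1\times S_2\times S_3$ of Polish spaces, a sequence of product measures $\mu_n^{(1)}\otimes\mu_n^{(2)}\otimes\mu_n^{(3)}$ converges weakly if and only if each marginal sequence $\mu_n^{(i)}$ converges weakly, the limit being the product of the marginal limits. The three ambient spaces $\mathbb{D}([0,\infty],\mathbb{R})$, $\mathbb{D}([0,\infty),\mathbb{R}^\mathcal{G})^{\mathcal{G}\backslash\{u\}\times\mathbb{N}}$ and $\mathbb{R}^\mathbb{N}$ are all Polish (the latter two being countable products of Polish spaces equipped with the product topology). The marginal convergences are precisely Lemmas \ref{sczti}, \ref{easys} and \ref{skcodm}. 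Since the pre-limit marginals factorise and each converges, the joint limit is automatically the independent coupling, with $Z^*(\cdot)$ and the $Y_{v,j}(\cdot)$ independent, exactly as claimed.

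The only point requiring care — and the main, if modest, obstacle — concerns the middle factor, indexed by the countably infinite set $\mathcal{G}\backslash\{u\}\times\mathbb{N}$. Lemma \ref{easys} supplies convergence of each single coordinate $\mathcal{Y}^n_{v,j}(\cdot)$, but this must be upgraded to convergence on the countable product space. Here I would apply the product-measure principle coordinatewise: since the $\mathcal{Y}^n_{v,j}(\cdot)$ are independent over $(v,j)$ with independent limits $Y_{v,j}(\cdot)e_v$, every finite-dimensional projection converges by Lemma \ref{easys}, and on a countable product of Polish spaces weak convergence is equivalent to convergence of all finite-dimensional projections. Assembling the three blocks by one further application of the product-measure fact completes the argument.
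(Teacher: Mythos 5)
Your proposal is correct and follows essentially the same route as the paper, whose one-line proof simply notes that the marginal convergences of Lemmas \ref{sczti}, \ref{easys}, and \ref{skcodm} combine into joint convergence on the product space by the independence built into the construction. You merely make explicit the details the paper leaves implicit (the product-measure fact on Polish spaces and the reduction of the countable product to finite-dimensional projections), both of which are standard and correctly handled.
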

We are yet to say how the random variables in (\ref{randomv}) are jointly distributed over $n\in\mathbb{N}$. In fact, the choice of this joint distribution over $n\in\mathbb{N}$ has no relevance to the statement of Theorem \ref{mt}. Hence the choice can be freely made, in a way that streamlines the proof. We assume that:
\bq\label{ascpc}
\lim_{n\rightarrow\infty}(e^{-\lambda_nt}Z^n(t))_{t\in[0,\infty]}=(e^{-\lambda t}Z^*(t))_{t\in[0,\infty]}
\eq
almost surely, on the space $\mathbb{D}([0,\infty],\mathbb{R})$;
\bq\label{ascmc}
\lim_{n\rightarrow\infty}(\mathcal{Y}^n_{v,j}(t))_{t\in[0,\infty)}=(Y_{v,j}(t)e_v)_{t\in[0,\infty)}
\eq almost surely, on the space $\mathbb{D}([0,\infty),\mathbb{R}^\mathcal{G})$, for $v\in\mathcal{G}\backslash\{u\}$ and $j\in\mathbb{N}$; and
\bq\label{asskcdm}
\left(\sum_{j\leq kn^{3/2}}1_{\{E_j^n\not=\emptyset\}}\right)_{k\in\mathbb{N}}\rightarrow(0)_{k\in\mathbb{N}}
\eq
almost surely, on the space $\mathbb{R}^\mathbb{N}$.

To justify that it is possible to have constructed the random variables in such a way that (\ref{ascpc}), (\ref{ascmc}), and (\ref{asskcdm}) hold, one can bring in Skorokhod's Representation Theorem, to use with Lemma \ref{jointconvergencefors}.

\subsection{Neglecting double mutations}\label{dmecbn}
Call the event that a primary cell divides to produce two mutant cells a `double mutation'. Recall that double mutations are represented by the events $\{E_j^n=(v,w)\}$, which occur at the times $S_j^n$ when the primary cell population steps down in size. In order to comment on double mutations, we will first prove a rather crude upper bound for the number of downsteps in the primary cell population trajectory. Write
\bq\label{taundef}
\tau_n:=\min\{t\geq0:Z^n(t)\in\{0,n\}\},
\eq
for the time at which the primary cell population hits $0$ or $n$. Write
\[
D_n:=\left|\{j\in\mathbb{N}:S_j^n\leq\tau_n\}\right|
\]
for the number of downsteps in the primary cell population before time $\tau_n$.
\begin{lemma}\label{cubnds}
\[\sup_{n\in\mathbb{N}}n^{-3/2}D_n<\infty
\]
almost surely.
\begin{proof}
For each $n\in\mathbb{N}$, let $(R_j^n)_{j\in\mathbb{N}}$ be a sequence of i.i.d. random variables with
\ba
\mathbb{P}[R^n_j=x]=\begin{cases} \alpha_n/(\alpha_n+\beta_n),\quad x=1;\\ \beta_n/(\alpha_n+\beta_n),\quad x=-1;\end{cases}
\ea
so
\[
\left(1+\sum_{j=1}^kR_j^n\right)_{k\in\mathbb{N}}
\]
is a random walk, whose distribution matches that of the discrete-time embedded chain of $Z^n(\cdot)$. Write
\[
\rho_n=\min\left\{k\in\mathbb{N}:1+\sum_{j=1}^kR_j^n\in\{0,n\}\right\}
\]
for the number of steps until the walk hits $n$ or $0$. Then the number of downsteps before hitting $n$ or $0$ is
\[
D_n\overset{d}{=}\sum_{j=1}^{\rho_n}1_{\{R^n_j=-1\}}\leq\rho_n.
\]
Therefore we can bound the tail of $D_n$'s distribution:
\[
\mathbb{P}[D_n> n^{3/2}]\leq \mathbb{P}[\rho_n> n^{3/2}].
\]
But $\{\rho_n> n^{3/2}\}\subset\{1+\sum_{j=1}^{\lfloor n^{3/2}\rfloor}R_j^n<n\}$, so
\bq
\mathbb{P}[D_n> n^{3/2}]&\leq&\mathbb{P}\left[1+\sum_{j=1}^{\lfloor n^{3/2}\rfloor}R_j^n<n\right]\nonumber\\
&\leq&\mathbb{P}\Bigg[\left(\sum_{j=1}^{\lfloor n^{3/2}\rfloor}R_j^n- \frac{\lfloor n^{3/2}\rfloor\lambda_n}{\alpha_n+\beta_n}\right)^2\\
&&\quad\quad\quad>\left(\frac{\lfloor n^{3/2}\rfloor\lambda_n}{\alpha_n+\beta_n}+1-n\right)^2\Bigg]\label{wdtims}\nonumber\\
&\leq&\left(\frac{\lfloor n^{3/2}\rfloor\lambda_n}{\alpha_n+\beta_n}+1-n\right)^{-2}\text{Var}\left[\sum_{j=1}^{\lfloor n^{3/2}\rfloor}R_j^n\right]\label{chshi}\\
&\leq&cn^{-3/2}\label{bwn32},
\eq
for some constant $c>0$. Inequality (\ref{wdtims}) holds for large enough $n$ and Inequality (\ref{chshi}) is Chebyshev's inequality. Finally, (\ref{bwn32}) gives that
\[
\sum_{n\in\mathbb{N}}\mathbb{P}[D_n> n^{3/2}]<\infty,
\]
and the result is proven by Borel-Cantelli.
\end{proof}
\end{lemma}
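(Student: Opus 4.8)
The plan is to pass to the discrete-time skeleton of $Z^n(\cdot)$ and then run a Chebyshev plus Borel--Cantelli argument. First I would, for each $n$, introduce an i.i.d. sequence $(R_j^n)_{j\in\mathbb{N}}$ taking the value $+1$ with probability $\alpha_n/(\alpha_n+\beta_n)$ and $-1$ with probability $\beta_n/(\alpha_n+\beta_n)$, so that the random walk $(1+\sum_{j=1}^k R_j^n)_{k}$ has the law of the embedded jump chain of $Z^n(\cdot)$. Writing $\rho_n$ for the first index at which this walk hits $0$ or $n$, every downstep of $Z^n(\cdot)$ before $\tau_n$ is a $-1$ step of the walk before $\rho_n$, so $D_n\le\rho_n$.

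Second, I would estimate the tail of $\rho_n$. If $\rho_n>n^{3/2}$ then the walk is still strictly below $n$ after $\lfloor n^{3/2}\rfloor$ steps, i.e. $1+\sum_{j=1}^{\lfloor n^{3/2}\rfloor}R_j^n<n$. The walk has positive drift $\mathbb{E}R_1^n=\lambda_n/(\alpha_n+\beta_n)$, so its mean position after $\lfloor n^{3/2}\rfloor$ steps is of order $n^{3/2}$, which eventually dwarfs $n$. Hence for large $n$ the event above is a deviation of the walk below its mean by an amount of order $n^{3/2}$, and Chebyshev's inequality bounds its probability by $\mathrm{Var}[\sum_{j=1}^{\lfloor n^{3/2}\rfloor}R_j^n]$ over the square of that gap. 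The variance is at most $\lfloor n^{3/2}\rfloor$ since each increment lies in $\{-1,1\}$, while the squared gap is of order $n^3$, so $\mathbb{P}[D_n>n^{3/2}]\le\mathbb{P}[\rho_n>n^{3/2}]=O(n^{-3/2})$.

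Finally, since $\sum_n n^{-3/2}<\infty$, the Borel--Cantelli lemma gives that almost surely $D_n\le n^{3/2}$ for all but finitely many $n$, which is exactly $\sup_n n^{-3/2}D_n<\infty$ almost surely.

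The point requiring care is uniformity in $n$ of the drift estimate: the bound $O(n^{-3/2})$ relies on $\lambda_n$ staying bounded away from $0$ and on $\alpha_n+\beta_n$ staying bounded, both of which follow from $\lambda_n\to\lambda>0$ and $\alpha_n+\beta_n\to\alpha(u')+\beta(u')<\infty$ as in Lemma \ref{sczti}. For this lemma the exponent only needs $3/2>1$, so that the drift opens a gap of strictly larger order than the standard deviation (of order $n^{3/4}$) and the resulting tail is summable; the complementary constraint $3/2<2$ is instead what Lemma \ref{skcodm} exploits.
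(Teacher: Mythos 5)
Your proposal is correct and follows essentially the same route as the paper's proof: the embedded random walk with $D_n\le\rho_n$, the inclusion of $\{\rho_n>n^{3/2}\}$ in the event that the walk sits below $n$ after $\lfloor n^{3/2}\rfloor$ steps, Chebyshev's inequality giving a tail of order $n^{-3/2}$, and Borel--Cantelli. Your closing remarks on uniformity of the drift in $n$ and on the exponent needing only $3/2>1$ here (with $3/2<2$ reserved for the double-mutation estimate) are accurate and consistent with the paper's own remark on that point.
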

Now it is to be seen that double mutations occurring before time $\tau_n$ can be neglected.
\begin{lemma}\label{kyztz}Let $v,w\in\mathcal{G}\backslash\{u\}$. As $n\rightarrow\infty$,
\[
K^n_{v,w}(\tau_n)\rightarrow0
\]
almost surely.

\begin{proof}From Lemma \ref{cubnds}, $C:=\sup_{n\in\mathbb{N}}n^{-3/2}D_n<\infty$. Then
\ba
K^n_{v,w}(\tau_n)&=&\sum_{j=1}^{D_n}1_{\{E_j^n=(v,w)\}}\\
&\leq&\sum_{j=1}^{\lceil Cn^{3/2}\rceil}1_{\{E_j^n=(v,w)\}}\\
&\leq&\sum_{j=1}^{\lceil Cn^{3/2}\rceil}1_{\{E_j^n\not=\emptyset\}}.
\ea
By (\ref{asskcdm}) this converges to zero as $n\rightarrow\infty$.
\end{proof}
\end{lemma}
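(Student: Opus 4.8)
The plan is to rewrite $K^n_{v,w}(\tau_n)$ as a sum of indicators over the downsteps of the primary population, dominate the random number of such downsteps by a deterministic multiple of $n^{3/2}$ by means of Lemma \ref{cubnds}, and then read off the conclusion from the almost-sure convergence (\ref{asskcdm}).

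First I would observe that, by construction, a primary-cell division producing two mutants of types $v$ and $w$ is exactly a downstep $S^n_j$ of the primary population at which $E^n_j=(v,w)$. Since $D_n$ counts the downsteps occurring up to time $\tau_n$, this yields
\ba
K^n_{v,w}(\tau_n)=\sum_{j=1}^{D_n}1_{\{E^n_j=(v,w)\}}.
\ea
The difficulty is that the upper summation limit $D_n$ is random and could, a priori, outgrow the scale $n^{3/2}$ on which (\ref{asskcdm}) is stated. This is exactly where Lemma \ref{cubnds} is needed: putting $C:=\sup_{n\in\mathbb{N}}n^{-3/2}D_n$, which is finite almost surely, I obtain the deterministic domination $D_n\leq\lceil Cn^{3/2}\rceil$ simultaneously for every $n$, and hence
\ba
K^n_{v,w}(\tau_n)\leq\sum_{j=1}^{\lceil Cn^{3/2}\rceil}1_{\{E^n_j=(v,w)\}}\leq\sum_{j=1}^{\lceil Cn^{3/2}\rceil}1_{\{E^n_j\not=\emptyset\}},
\ea
the last step discarding the particular types and counting all double mutations.

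To conclude, I would fix an integer $k\geq C+1$, so that $kn^{3/2}\geq\lceil Cn^{3/2}\rceil$ for every $n\geq1$; the right-hand side above is then at most $\sum_{j\leq kn^{3/2}}1_{\{E^n_j\not=\emptyset\}}$, which tends to zero almost surely by (\ref{asskcdm}). Because $K^n_{v,w}(\tau_n)$ is a non-negative integer, this forces it to vanish for all large $n$. The genuine obstacle is not any single estimate but the transfer of the vanishing of double mutations --- phrased along the \emph{deterministic} index sequence $kn^{3/2}$ in (\ref{asskcdm}) --- to the \emph{random} stopping time $\tau_n$; the uniform-in-$n$ bound $\sup_{n}n^{-3/2}D_n<\infty$ of Lemma \ref{cubnds} (itself coming from a Chebyshev estimate on the embedded random walk together with Borel--Cantelli) is precisely the device that bridges this gap, after which the domination is routine.
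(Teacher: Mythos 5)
Your proposal is correct and follows essentially the same route as the paper's own proof: expressing $K^n_{v,w}(\tau_n)$ as the sum $\sum_{j=1}^{D_n}1_{\{E^n_j=(v,w)\}}$, dominating $D_n$ via Lemma \ref{cubnds}, and concluding from (\ref{asskcdm}). Your extra step of fixing an integer $k\geq C+1$ so that the bound can be read along the deterministic index sequence of (\ref{asskcdm}) is a slightly more careful articulation of a point the paper leaves implicit, but it is the same argument.
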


\subsection{Convergence of genome counts}\label{asco}
The purpose of this section is to show that $\mathcal{X}^n(\sigma_n)$ converges when conditioned on the event $\{W^*>0\}$ ($W^*$ is defined in Remark \ref{marlim}). The times $\tau_n$ (defined in (\ref{taundef})) will play the role of a helpful stepping stone in the proof. 
\begin{lemma}\label{cwd}
Condition on $\{W^*>0\}$. Then, almost surely,
\begin{enumerate}
\item there exists $n_0$ such that for all $n\geq n_0$, $Z^n(\tau_n)=n$; and
\item $\lim_{n\rightarrow\infty}\tau_n=\infty$.
\end{enumerate}

\begin{proof}To see the first statement, observe that there exists $n_0$ such that for all $n\geq n_0$, $W^n>W^*/2>0$. For such $n$, $\lim_{t\rightarrow\infty} Z^n(t)=\infty$, and hence $Z^n(\cdot)>0$. To see the second statement, suppose for a contradiction that there exists a bounded subsequence $(\tau_{n_k})\subset[0,C]$. Then, for large enough $k$,
\[
n_k=Z^{n_k}(\tau_{n_k})\leq\sup_{n\in\mathbb{N}}\sup_{t\in[0,C]}Z^n(t).
\]
The left hand side of the inequality is unbounded over $k$. On the other hand, the right hand side, which does not depend on $k$, is finite thanks to (\ref{ascpc}).
\end{proof}
\end{lemma}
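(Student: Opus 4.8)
The plan is to extract everything from the almost sure convergence (\ref{ascpc}) of the renormalised primary populations $e^{-\lambda_n t}Z^n(t)$ to $e^{-\lambda t}Z^*(t)$ on $\mathbb{D}([0,\infty],\mathbb{R})$: this single input controls both the eventual survival of each $Z^n$ (which gives the first statement) and the uniform size of the $Z^n$ on compact time intervals (which gives the second).

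For the first statement, I would first observe that $t=\infty$ is a continuity point of the limit $e^{-\lambda t}Z^*(t)$ (whose value there is $W^*$, by Remark \ref{marlim}), so that convergence in $\mathbb{D}([0,\infty],\mathbb{R})$ forces the terminal values to converge: $W^n\to W^*$ almost surely, where $W^n=\lim_{t\to\infty}e^{-\lambda_n t}Z^n(t)$. Conditioning on $\{W^*>0\}$, there is then $n_0$ with $W^n>W^*/2>0$ and $\lambda_n>0$ for all $n\geq n_0$ (the latter because $\lambda_n\to\lambda>0$). For a supercritical birth-death process, $\{W^n>0\}$ is precisely the non-extinction event, on which $Z^n(t)\to\infty$; in particular $Z^n$ stays strictly positive and, taking integer values with unit upward jumps, must reach $n$ at a finite time without ever touching $0$. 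Hence $Z^n(\tau_n)=n$ for all $n\geq n_0$.

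For the second statement I would argue by contradiction. If $\tau_n\not\to\infty$, pass to a subsequence with $\tau_{n_k}\leq C$ for some finite $C$. By the first statement $Z^{n_k}(\tau_{n_k})=n_k$ for large $k$, so $n_k\leq\sup_n\sup_{t\in[0,C]}Z^n(t)$. The key point is that this supremum is almost surely finite. Indeed, for almost every realisation the sequence $e^{-\lambda_n t}Z^n(t)$ converges in $\mathbb{D}([0,\infty],\mathbb{R})$ and is therefore uniformly bounded (a convergent sequence in the Skorokhod space is bounded in sup-norm via its defining time-changes); since on $[0,C]$ the factor $e^{\lambda_n t}$ is at most $e^{\lambda_n C}$ with $\lambda_n$ bounded, stripping off the renormalisation preserves the bound and yields $\sup_n\sup_{t\in[0,C]}Z^n(t)<\infty$. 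This contradicts $n_k\to\infty$.

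The step I expect to be the main obstacle is precisely this passage from the renormalised Skorokhod convergence to a genuine uniform bound on the unrenormalised $\sup_n\sup_{t\in[0,C]}Z^n(t)$: one must check that $J_1$-convergence (as opposed to uniform convergence) still delivers uniform boundedness over $n$, and that multiplying by $e^{\lambda_n t}\leq e^{\lambda_n C}$ does not destroy it, which it cannot since $\lambda_n\to\lambda$ is bounded. Everything else rests on the standard extinction-versus-$\{W>0\}$ dichotomy for supercritical birth-death processes and is routine.
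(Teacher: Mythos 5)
Your proposal is correct and follows essentially the same route as the paper: the first statement via $W^n \to W^*$ (so eventually $W^n > W^*/2 > 0$, forcing $Z^n(t)\to\infty$ and hence no extinction), and the second via contradiction with a bounded subsequence $(\tau_{n_k})\subset[0,C]$, using that (\ref{ascpc}) makes $\sup_n\sup_{t\in[0,C]}Z^n(t)$ almost surely finite. The only difference is that you spell out the two steps the paper leaves implicit — that convergence at the compactified endpoint $t=\infty$ gives $W^n\to W^*$, and that a $J_1$-convergent sequence is uniformly sup-norm bounded so the renormalisation can be stripped off on $[0,C]$ — both of which are handled correctly.
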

\begin{lemma}\label{consmt}Condition on $\{W^*>0\}$. Suppose that $(a_n)_{n\in\mathbb{N}}$ is a sequence of real-valued random variables on the same probabiity space as everything else, with
$$
\lim_{n\rightarrow\infty}a_n=\infty,
$$
\[
a_n\leq\tau_n
\]
for each $n$, and
\[
\lim_{n\rightarrow\infty}(a_n-\tau_n)=l\in[-\infty,0]
\]
almost surely. Then, almost surely,
\ba
\lim_{n\rightarrow\infty}K_v^n(a_n)= \begin{cases}K_v^*(l),\quad &v\in\mathcal{G}_{i},i\in\mathcal{S};\\
0,\quad&v\in\mathcal{G}_{\geq2};
\end{cases}
\ea
where
\[
K_v^*(s)=N_v(2\lambda^{-1}\alpha(u')\theta_i^{u_i,v_i} e^{\lambda s}).
\]
Moreover, for $v\in\mathcal{G}_i$ and $j\in\mathbb{N}$,
$$
\lim_{n\rightarrow\infty}\left(a_n-T^n_{v,j}\right)= l-T^*_{v,j}
$$
almost surely,
where
$$
T^*_{v,j}=\min\{s\in\mathbb{R}:K_v^*(s)=j\}.
$$
\begin{proof}
Let $t\in\mathbb{R}$. Since $Z^n(\tau_n)=n$,
$$
n^{-1}\int_0^{a_n+t}Z^n(s)ds=\int_{-a_n}^t\frac{Z^n(a_n+s)}{e^{\lambda_n(a_n+s)}}\frac{e^{\lambda_n\tau_n}}{Z^n(\tau_n)}e^{\lambda_n(a_n-\tau_n+s)}ds.
$$
Thanks to (\ref{ascpc}):
\begin{enumerate}
\item for any sequence $(t_n)$ which converges to infinity, $\lim_{n\rightarrow\infty}e^{-\lambda_nt_n}Z^n(t_n)=W^*$ almost surely; and
\item
$\sup_{n\in\mathbb{N}}\sup_{t\in[0,\infty]}e^{-\lambda_nt}Z^n(t)<\infty$ almost surely.
\end{enumerate}
So, using dominated convergence,
\ba
\lim_{n\rightarrow\infty}n^{-1}\int_0^{a_n+t}Z^n(s)ds=\lambda^{-1}e^{\lambda(l+t)}
\ea
almost surely. Note also that
\ba
\lim_{n\rightarrow\infty}np_n(u,v)=\begin{cases}\theta_i^{u_i,v_i},\quad &v\in\mathcal{G}_{i};\\0,\quad &v\in\mathcal{G}_{\geq2}.
\end{cases}
\ea
Then
$$
\lim_{n\rightarrow\infty}2p_n(u,v)\alpha(u')\int_0^{a_n+t}Z^n(s)ds=2\lambda^{-1}\alpha(u')\theta_i^{u_i,v_i}e^{\lambda(l+t)}.
$$
Hence, recalling (\ref{knzd}),
$$
K^n_v(a_n+t)=N_v\left(2p_n(u,v)\alpha(u')\int_0^{a_n+t}Z^n(s)ds\right)
$$
converges almost surely to
$$
K^*_v(l+t)=N_v\left(2\lambda^{-1}\alpha(u')\theta_i^{u_i,v_i}e^{\lambda(l+t)}\right),
$$ because $N_v(\cdot)$ is almost surely continuous at any fixed point.

Finally we check convergence of the $a_n-T^n_{v,j}$. Let $\epsilon>0$. For sufficiently large $n$,
$$
K^n_{v}(a_n-l+T^*_{v,j}+\epsilon)=K^*_{v}(T^*_{v,j}+\epsilon)\geq j;
$$
so
$$a_n-l+T_{v,j}^*+\epsilon\geq T_{v,j}^n,$$
or equivalently $$a_n-T_{v,j}^n\geq l-T^*_{v,j}-\epsilon.$$
The argument is now repeated for an upper bound. For sufficiently large $n$,
$$
K^n_{v}(a_n-l+T^*_{v,j}-\epsilon)=K^*_{v}(T^*_{v,j}-\epsilon)< j;
$$
so
$$a_n-l+T_{v,j}^*-\epsilon<T_{v,j}^n,$$
or equivalently $$a_n-T_{v,j}^n< l-T^*_{v,j}+\epsilon.$$
\end{proof}
\end{lemma}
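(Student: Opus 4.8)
The plan is to reduce the whole statement to the convergence of the random argument that drives the Poisson process $N_v$, namely $2p_n(u,v)\alpha(u')\int_0^{a_n}Z^n(s)\,ds$, and then to read off both conclusions by soft continuity arguments. I would first factor this quantity as $2\alpha(u')\cdot\big(np_n(u,v)\big)\cdot\big(n^{-1}\int_0^{a_n}Z^n(s)\,ds\big)$. The mutation factor is elementary: for $v\in\mathcal{G}_i$ the genome $v$ differs from $u$ only at site $i$, so a single factor $\mu_{n,i}^{u_i,v_i}\sim\theta_i^{u_i,v_i}/n$ is small while all remaining factors tend to $1$, giving $np_n(u,v)\to\theta_i^{u_i,v_i}$; for $v\in\mathcal{G}_{\geq2}$ at least two factors are of order $n^{-1}$, so $np_n(u,v)\to0$.

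The heart of the matter is the rescaled integral. I would centre time at $a_n$ and exploit $Z^n(\tau_n)=n$ (available for large $n$ on $\{W^*>0\}$ by Lemma~\ref{cwd}) to write
\[
n^{-1}Z^n(a_n+s)=\frac{e^{-\lambda_n(a_n+s)}Z^n(a_n+s)}{e^{-\lambda_n\tau_n}Z^n(\tau_n)}\,e^{\lambda_n(a_n-\tau_n+s)}.
\]
Since $a_n\to\infty$ and $\tau_n\to\infty$, the almost sure convergence (\ref{ascpc}) on the compactified interval $[0,\infty]$ drives both numerator and denominator to the same martingale limit $W^*$, so their ratio tends to $1$ and the random factor $W^*$ cancels; with $\lambda_n\to\lambda$ and $a_n-\tau_n\to l$ this leaves $n^{-1}Z^n(a_n+s)\to e^{\lambda(l+s)}$. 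The same convergence on $[0,\infty]$ also supplies the uniform bound $\sup_{n}\sup_{t\in[0,\infty]}e^{-\lambda_n t}Z^n(t)<\infty$, which dominates the integrand and, via dominated convergence, gives $n^{-1}\int_0^{a_n+t}Z^n(s)\,ds\to\lambda^{-1}e^{\lambda(l+t)}$. Multiplying by the mutation factor yields convergence of the argument of $N_v$ to the \emph{deterministic} limit $2\lambda^{-1}\alpha(u')\theta_i^{u_i,v_i}e^{\lambda(l+t)}$ (and to $0$ when $v\in\mathcal{G}_{\geq2}$).

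Both conclusions now follow softly. Because the limit of the argument is deterministic, $N_v$ is almost surely continuous there, so $K_v^n(a_n+t)\to K_v^*(l+t)$; taking $t=0$ gives the first claim, with $K_v^*(l)=0$ in the multiply-mutated case. For the jump times I would sandwich: evaluating this convergence at the (almost sure) continuity points $t=-l+T_{v,j}^*\pm\epsilon$ of the limit shows that for large $n$ one has $K_v^n(a_n-l+T_{v,j}^*+\epsilon)\geq j$ and $K_v^n(a_n-l+T_{v,j}^*-\epsilon)<j$, which pins $T_{v,j}^n$ to within $\epsilon$ of $a_n-l+T_{v,j}^*$ and hence yields $a_n-T_{v,j}^n\to l-T_{v,j}^*$.

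I expect the integral step to be the main obstacle, since one must interchange a limit with $\int_0^{a_n+t}$ whose upper endpoint is itself random and diverging and whose integrand is a rescaled branching process. The device that makes it routine is to view $e^{-\lambda_n\cdot}Z^n(\cdot)$ as converging on the compactified time axis $[0,\infty]$: this simultaneously delivers the pointwise limit at diverging times (the cancellation $W^*/W^*=1$, which also desirably removes all randomness from the limiting intensity) and the domination needed to pass the limit through the integral.
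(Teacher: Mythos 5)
Your proposal follows essentially the same route as the paper's proof: the identical decomposition $n^{-1}Z^n(a_n+s)=\frac{e^{-\lambda_n(a_n+s)}Z^n(a_n+s)}{e^{-\lambda_n\tau_n}Z^n(\tau_n)}e^{\lambda_n(a_n-\tau_n+s)}$ exploiting $Z^n(\tau_n)=n$, the same two consequences of (\ref{ascpc}) (convergence along diverging time sequences and the uniform bound) feeding dominated convergence, the same limit $np_n(u,v)\to\theta_i^{u_i,v_i}$ or $0$, the same appeal to almost sure continuity of $N_v$ at deterministic points, and the same $\epsilon$-sandwich at $t=-l+T^*_{v,j}\pm\epsilon$ for the jump times. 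There is no substantive difference to report.
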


\begin{lemma}\label{antncon}Condition on $\{W^*>0\}$. Suppose that $(a_n)$ satisfies the conditions of Lemma \ref{consmt}. Then, almost surely,
\[
\lim_{n\rightarrow\infty}\left(\mathcal{X}^n(a_n)-Z^n(a_n)e_u\right)=\sum_{i\in\mathcal{S}}\sum_{v\in\mathcal{G}_{i}}e_v\sum_{j=1}^{K^*_v(l)}Y_{v,j}(l-T^*_{v,j}),
\]
where the $Y_{v,j}(\cdot)$ are from Lemma \ref{easys} and the $K_v^*(\cdot)$ and $T_{v,j}^*$ are from Lemma \ref{consmt}.
\begin{remark}By definition, $T^*_{v,j}\leq l$ for $j=1,..,K_v^*(l)$. So the limit in Lemma \ref{antncon} is well defined.
\end{remark}
\begin{proof}[Proof of Lemma \ref{antncon}]Recall that
\bq\label{atcep}
\mathcal{X}^n(a_n)-Z^n(a_n)e_u&=&\sum_{v\in\mathcal{G}\backslash\{u\}}\sum_{j=1}^{K^n_v(a_n)}\mathcal{Y}^n_{v,j}(a_n-T^n_{v,j})\\&&+\sum_{v,w\in\mathcal{G}\backslash\{u\}}\sum_{j=1}^{K^n_{v,w}(a_n)}\mathcal{Y}^n_{v,w,j}(a_n-T^n_{v,w,j}).\nonumber
\eq
The `double mutation' term in (\ref{atcep}) converges to zero, because
\[
K^n_{v,w}(a_n)\leq K^n_{v,w}(\tau_n),
\]
which converges to zero by Lemma \ref{kyztz}. As for the `single mutation' term in (\ref{atcep}), Lemma \ref{consmt} says that the $K^n_v(a_n)$ and $a_n-T^n_{v,j}$ converge to $K^*_v(l)$ and $l-T^*_{v,j}$, while  (\ref{ascmc}) says that the $\mathcal{Y}_{v,j}^n(\cdot)$ converge to $e_vY_{v,j}(\cdot)$.
\end{proof}
\end{lemma}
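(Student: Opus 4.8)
The plan is to work directly from the representation (\ref{repr}), splitting $\mathcal{X}^n(a_n)-Z^n(a_n)e_u$ into its `single mutation' sum and its `double mutation' sum and treating these separately: I would show the double mutation sum is eventually zero and that the single mutation sum converges term by term to the claimed limit. The whole argument leans on $\mathcal{G}=\mathcal{N}^\mathcal{S}$ being finite, so that every sum over genomes has finitely many terms and almost sure convergence of each term gives almost sure convergence of the sum.

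First I would dispose of the double mutation term $\sum_{v,w}\sum_{j=1}^{K^n_{v,w}(a_n)}\mathcal{Y}^n_{v,w,j}(a_n-T^n_{v,w,j})$. Since $K^n_{v,w}(\cdot)$ is a nondecreasing counting function and $a_n\leq\tau_n$ by the hypotheses inherited from Lemma \ref{consmt}, we have $K^n_{v,w}(a_n)\leq K^n_{v,w}(\tau_n)$, which tends to zero almost surely by Lemma \ref{kyztz}. As $K^n_{v,w}(a_n)$ is integer valued, it must therefore equal zero for all large $n$, so each inner sum is eventually empty; there being finitely many pairs $(v,w)\in(\mathcal{G}\backslash\{u\})^2$, the entire double mutation term is eventually zero.

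Next I would handle the single mutation term $\sum_{v}\sum_{j=1}^{K^n_v(a_n)}\mathcal{Y}^n_{v,j}(a_n-T^n_{v,j})$ using Lemma \ref{consmt}. For $v\in\mathcal{G}_{\geq2}$ the count $K^n_v(a_n)\to0$, killing those summands exactly as above, so only the genomes $v\in\mathcal{G}_i$ (for $i\in\mathcal{S}$) survive. For such $v$, Lemma \ref{consmt} gives $K^n_v(a_n)\to K^*_v(l)$ and $a_n-T^n_{v,j}\to l-T^*_{v,j}$ almost surely, and (\ref{ascmc}) gives $\mathcal{Y}^n_{v,j}(\cdot)\to Y_{v,j}(\cdot)e_v$ almost surely in the Skorokhod topology. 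Since the integer count stabilises at the finite value $K^*_v(l)$, I may fix the number of inner summands for large $n$ and pass to the limit in each, reducing everything to evaluating the limiting processes at the limiting times.

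The step I expect to be the main obstacle is justifying $\mathcal{Y}^n_{v,j}(a_n-T^n_{v,j})\to Y_{v,j}(l-T^*_{v,j})e_v$ from joint Skorokhod convergence of the process together with convergence of its random time argument. The relevant general fact is that $x_n\to x$ in $\mathbb{D}$ and $s_n\to s$ imply $x_n(s_n)\to x(s)$ only when $s$ is a continuity point of $x$; because $Y_{v,j}$ is a pure jump (birth-death) process, I must verify that the random time $l-T^*_{v,j}$ almost surely avoids its jump times. This I would argue from independence: $T^*_{v,j}$ is measurable with respect to the driving process $N_v$ and $l$ with respect to $Z^*$ (through $W^*$), both independent of $Y_{v,j}$, so conditioning on $(l,T^*_{v,j})$ the chance that $l-T^*_{v,j}$ hits one of the countably many jump times of $Y_{v,j}$ is zero. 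With the continuity point secured and only finitely many surviving summands, term-by-term convergence delivers the stated almost sure limit.
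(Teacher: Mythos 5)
Your proof is correct and is essentially the paper's own argument: the same decomposition into single- and double-mutation terms, the same bound $K^n_{v,w}(a_n)\leq K^n_{v,w}(\tau_n)$ combined with Lemma \ref{kyztz} to kill the double-mutation sum, and the same use of Lemma \ref{consmt} and (\ref{ascmc}) for the surviving single-mutation terms. The only difference is that you explicitly verify the continuity-point condition needed to evaluate Skorokhod-convergent processes at converging random time arguments --- a detail the paper's proof passes over in silence --- and your independence argument for it is sound in the setting where the lemma is actually applied ($a_n=\sigma_n$, where $l=0$ is deterministic, so the independence of $l$ from $Y_{v,j}$ that you invoke indeed holds).
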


%
%
%
%

\begin{lemma}\label{tnrncon}Condition on $\{W^*>0\}$.
\[
\lim_{n\rightarrow\infty}(\sigma_n-\tau_n)=0
\]
almost surely.
\begin{proof}
By Lemma \ref{cwd}, for large enough $n$, $Z^n(\tau_n)=n$. So $|\mathcal{X}^n(\tau_n)|\geq n$, and hence $\sigma_n\leq\tau_n$. Therefore
\[
\liminf_{n\rightarrow\infty}(\sigma_n-\tau_n)\leq0.
\]
Suppose, looking for a contradiction, that
\[
\liminf_{n\rightarrow\infty}(\sigma_n-\tau_n)=l\in[-\infty,0).
\]
Take a subsequence with
\[
\lim_{k\rightarrow\infty}(\sigma_{n_k}-\tau_{n_k})=l.
\]
Then, by Lemma \ref{antncon},
\[
|\mathcal{X}^n(\sigma_{n_k})-Z^n(\sigma_{n_k})e_u|
\]
converges, and so must be a bounded sequence. However it is also true that, taking $k\rightarrow\infty$,
\ba
|\mathcal{X}^n(\sigma_{n_k})-Z^n(\sigma_{n_k})e_u|&=&n_k-Z^{n_k}(\sigma_{n_k})\\
&=&n_k\left(1-\frac{Z^{n_k}(\sigma_{n_k})}{e^{\lambda_{n_k}\sigma_{n_k}}}\frac{e^{\lambda_{n_k}\tau_{n_k}}}{Z^{n_k}(\tau_{n_k})}e^{\lambda_{n_k}(\sigma_{n_k}-\tau_{n_k})}\right)\\
&\sim&n_k(1-e^{\lambda l}),
\ea
which is unbounded.
\end{proof}
\end{lemma}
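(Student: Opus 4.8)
The plan is to prove $\lim_n(\sigma_n-\tau_n)=0$ by a two-sided squeeze, the upper side being immediate and the lower side requiring a contradiction driven by the fact that the mutant subpopulation stays of order $1$. For the upper bound, observe that on $\{W^*>0\}$ Lemma \ref{cwd} gives $Z^n(\tau_n)=n$ for all large $n$, so the primary population exits $\{0,n\}$ at the top. Since the total count dominates the primary count, $|\mathcal{X}^n(t)|\ge Z^n(t)$ for every $t$, we get $|\mathcal{X}^n(\tau_n)|\ge n$ and hence $\sigma_n\le\tau_n$. Thus $\sigma_n-\tau_n\le0$ eventually, so $\limsup_n(\sigma_n-\tau_n)\le0$.

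For the matching lower bound I would argue by contradiction: suppose $\liminf_n(\sigma_n-\tau_n)=l\in[-\infty,0)$ and pass to a subsequence $(n_k)$ along which $\sigma_{n_k}-\tau_{n_k}\to l$. I intend to invoke Lemma \ref{antncon} with $a_n=\sigma_n$. Its hypotheses $\sigma_{n_k}\le\tau_{n_k}$ and $\sigma_{n_k}-\tau_{n_k}\to l\in[-\infty,0]$ are already in hand; the remaining requirement $\sigma_{n_k}\to\infty$ I would verify exactly as in Lemma \ref{cwd}, using that (\ref{ascpc}) bounds $\sup_n\sup_t e^{-\lambda_n t}Z^n(t)$, so the primary population --- and, because mutations arrive at the vanishing per-cell rate $\sim\theta/n$, also the mutant count --- is uniformly bounded on any compact time window, which precludes $|\mathcal{X}^{n_k}(\cdot)|$ from reaching $n_k\to\infty$ on a bounded time interval.

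Lemma \ref{antncon} then forces $|\mathcal{X}^{n_k}(\sigma_{n_k})-Z^{n_k}(\sigma_{n_k})e_u|$ to converge to a finite random limit, so this sequence is bounded. But it equals the gap $n_k-Z^{n_k}(\sigma_{n_k})$, and writing
\[
n_k-Z^{n_k}(\sigma_{n_k})=n_k\left(1-\frac{e^{-\lambda_{n_k}\sigma_{n_k}}Z^{n_k}(\sigma_{n_k})}{e^{-\lambda_{n_k}\tau_{n_k}}Z^{n_k}(\tau_{n_k})}\,e^{\lambda_{n_k}(\sigma_{n_k}-\tau_{n_k})}\right),
\]
the almost sure Malthusian convergence in (\ref{ascpc}), together with $\sigma_{n_k},\tau_{n_k}\to\infty$, sends both normalised occupation terms to $W^*$; their ratio tends to $1$ and the bracket to $1-e^{\lambda l}>0$ (reading $e^{\lambda(-\infty)}=0$ and using $\lambda>0$). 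Hence the gap grows like $n_k(1-e^{\lambda l})\to\infty$, contradicting boundedness, so $l\ge0$ and $\liminf_n(\sigma_n-\tau_n)\ge0$; combined with the upper bound this yields the claim. The main obstacle is precisely this last quantitative step: one must recognise that a strictly negative $l$ would demand a macroscopic, $\Theta(n)$, number of mutant cells to bridge $Z^n(\sigma_n)\approx ne^{\lambda l}$ up to $n$, whereas Lemma \ref{antncon} caps the mutant count at $O(1)$. Care is needed to confirm that $a_n=\sigma_n$ genuinely satisfies Lemma \ref{antncon}'s hypotheses (especially $\sigma_{n_k}\to\infty$) and that the two Malthusian normalisations of $Z^{n_k}$, evaluated at the distinct times $\sigma_{n_k}$ and $\tau_{n_k}$, share the single limit $W^*$.
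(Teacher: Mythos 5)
Your proof is correct and takes essentially the same route as the paper's: the upper bound $\sigma_n\le\tau_n$ from Lemma \ref{cwd}, then a contradiction along a subsequence by invoking Lemma \ref{antncon} with $a_n=\sigma_n$ and the Malthusian computation showing $n_k-Z^{n_k}(\sigma_{n_k})\sim n_k(1-e^{\lambda l})$ is unbounded while Lemma \ref{antncon} caps it. The only differences are expository refinements: you state the easy direction as $\limsup_n(\sigma_n-\tau_n)\le0$ (sharper than the paper's $\liminf$), and you explicitly flag and sketch the verification of the hypothesis $\sigma_{n_k}\to\infty$ required by Lemma \ref{consmt}, a point the paper passes over in silence.
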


\begin{lemma}\label{coxr}Condition on $\{W^*>0\}$. 
\ba
\lim_{n\rightarrow\infty}(\mathcal{X}^n(\sigma_n)-Z^n(\sigma_n)e_u)&=&\sum_{i\in\mathcal{S}}\sum_{v\in\mathcal{G}_{i}}e_v\sum_{j=1}^{K^*_v(0)}Y_{v,j}(-T^*_{v,j}),
\ea
almost surely.
\begin{proof}By Lemma \ref{tnrncon}, $(\sigma_n)=(a_n)$ satisfies the conditions of Lemma \ref{consmt} with $l=0$. Then Lemma \ref{antncon} gives the result.
\end{proof}
\end{lemma}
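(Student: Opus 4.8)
The plan is to recognise Lemma \ref{coxr} as the $l=0$ instance of the machinery already assembled in Lemmas \ref{consmt} and \ref{antncon}, applied to the hitting times $(\sigma_n)$ themselves. Everything is conditioned on $\{W^*>0\}$ throughout. Lemma \ref{antncon} evaluates $\lim_{n\rightarrow\infty}(\mathcal{X}^n(a_n)-Z^n(a_n)e_u)$ for any sequence $(a_n)$ meeting the three hypotheses of Lemma \ref{consmt}, so the entire task reduces to checking that $a_n=\sigma_n$ (or a harmless modification of it) meets those hypotheses with limit parameter $l=0$; the displayed identity is then read off by setting $l=0$ in the conclusion of Lemma \ref{antncon}.

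First I would verify the three conditions of Lemma \ref{consmt} for $a_n=\sigma_n$. The decisive input is Lemma \ref{tnrncon}, which gives $\lim_{n\rightarrow\infty}(\sigma_n-\tau_n)=0$ almost surely; this immediately supplies the third condition, fixing $l=0$. Combining it with $\lim_{n\rightarrow\infty}\tau_n=\infty$ from Lemma \ref{cwd} yields $\sigma_n\rightarrow\infty$, the first condition. For the second condition, $\sigma_n\leq\tau_n$, I would reuse the observation from the proof of Lemma \ref{tnrncon}: by Lemma \ref{cwd} there is an $n_0$ with $Z^n(\tau_n)=n$ for $n\geq n_0$, whence $|\mathcal{X}^n(\tau_n)|\geq Z^n(\tau_n)=n$ and therefore $\sigma_n\leq\tau_n$.

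The only piece of bookkeeping requiring care is that $\sigma_n\leq\tau_n$ is guaranteed only for $n\geq n_0$, whereas Lemma \ref{consmt} requests the inequality for every $n$. I would circumvent this by applying Lemma \ref{antncon} to the modified sequence $a_n=\sigma_n\wedge\tau_n$, which satisfies $a_n\leq\tau_n$ for all $n$ and still obeys the other two conditions with $l=0$, since $a_n=\sigma_n$ for every $n\geq n_0$. Because almost sure convergence is insensitive to any finite initial segment, $\lim_{n\rightarrow\infty}(\mathcal{X}^n(a_n)-Z^n(a_n)e_u)$ coincides with $\lim_{n\rightarrow\infty}(\mathcal{X}^n(\sigma_n)-Z^n(\sigma_n)e_u)$, and Lemma \ref{antncon} at $l=0$ delivers exactly the right-hand side of the claim.

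Honestly, there is no real obstacle internal to Lemma \ref{coxr}: all of the analytic difficulty, namely controlling the single-mutation Poisson clocks $K_v^n$, discarding double mutations through Lemma \ref{kyztz}, and pinning $\sigma_n$ to $\tau_n$ in Lemma \ref{tnrncon}, has already been dispatched. The only thing that could trip up a careless write-up is the ``for every $n$'' versus ``for all large $n$'' mismatch noted above, together with the need to keep the conditioning on $\{W^*>0\}$ explicit so that $\tau_n\rightarrow\infty$ and $Z^n(\tau_n)=n$ remain available. With those handled, the proof is a one-line substitution of $l=0$.
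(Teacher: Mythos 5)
Your proposal is correct and follows essentially the same route as the paper: verify via Lemma \ref{tnrncon} (with Lemma \ref{cwd}) that $(\sigma_n)$ satisfies the hypotheses of Lemma \ref{consmt} with $l=0$, then invoke Lemma \ref{antncon}. Your extra care about the ``$\sigma_n\leq\tau_n$ only for $n\geq n_0$'' mismatch, handled by passing to $\sigma_n\wedge\tau_n$, is a sensible tightening of a point the paper leaves implicit, but it does not change the argument.
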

Let's look at the limit in Lemma \ref{coxr}. For $v\in\mathcal{G}_i$, $K^*_v(0)$ is Poisson distributed with mean $2\lambda^{-1}\alpha(u')\theta_i^{u_i,v_i}$. Conditional on $K^*_v(0)$, the times $(-T^*_{v,j})_{j=1}^{K^*_v(0)}$, unordered, are i.i.d. exponentially distributed random variables with mean $\lambda^{-1}$. So
$$
\sum_{j=1}^{K^*_v(0)}Y_{v,j}(-T^*_{v,j})
$$
has generalised Luria-Delbr{\"u}ck distribution with parameters
$$
\left(\lambda,\alpha(v'),\beta(v'),2\lambda^{-1}\alpha(u')\theta_i^{u_i,v_i}\right).
$$
Therefore the limit of Lemma \ref{coxr} is a vector of independent generalised Luria-Delbr{\"u}ck distributions:
\[
\sum_{i\in\mathcal{S}}\sum_{v\in\mathcal{G}_{i}}e_v\sum_{j=1}^{K^*_v(0)}Y_{v,j}(-T^*_{v,j})\overset{d}{=}\sum_{v\in\mathcal{G}\backslash\{u\}}e_vX_v,
\]
where the $X_v$ are as stated in Theorem \ref{mt}. To complete the proof of Theorem \ref{mt} we need to show that conditioning on $\{W^*>0\}$ can be translated to conditioning on $\{\sigma_n<\infty\}$, which is the subject of the next subsection.
\subsection{Conditioning on reaching $n$ cells}\label{wstarsigma} In order to connect $\{W^*>0\}$ and $\{\sigma_n<\infty\}$, the next result is the key. It states that these events are approximately the same for large $n$.
\begin{prop}\label{stwmp}~
\begin{enumerate}
\item$\lim_{n\rightarrow\infty}\mathbb{P}[W^*>0,\sigma_n=\infty]=0$, and
\item
$\lim_{n\rightarrow\infty}\mathbb{P}[W^*=0,\sigma_n<\infty]=0$.
\end{enumerate}
\end{prop}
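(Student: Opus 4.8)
The two statements are rather different in character, so I would treat them separately and expect the second to carry essentially all of the difficulty. For the first statement the work has effectively been done in Lemma \ref{cwd}. Conditioning on $\{W^*>0\}$, that lemma supplies a random, almost surely finite $n_0$ with $Z^n(\tau_n)=n$ for all $n\geq n_0$. Since the primary population is a subpopulation of the whole, $|\mathcal{X}^n(\tau_n)|\geq Z^n(\tau_n)=n$, so $\sigma_n\leq\tau_n<\infty$ for $n\geq n_0$. Hence, on $\{W^*>0\}$, the indicator $1_{\{\sigma_n=\infty\}}$ vanishes for all large $n$ almost surely, so $1_{\{W^*>0,\sigma_n=\infty\}}\to0$ almost surely, and bounded convergence gives $\mathbb{P}[W^*>0,\sigma_n=\infty]\to0$.

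For the second statement the plan is to route through the finite-$n$ primary martingale limit $W^n$ of Remark \ref{marlim}, via the inclusion
\[
\{W^*=0,\sigma_n<\infty\}\subseteq\{W^*=0,W^n>0\}\cup\{W^n=0,\sigma_n<\infty\},
\]
and to show each event on the right has vanishing probability. For $\{W^*=0,W^n>0\}$ I would avoid almost sure arguments, since $W^n\to W^*$ only forces $W^n\to0$ on $\{W^*=0\}$, not $W^n=0$, and instead compare survival probabilities. As $\alpha_n\to\alpha(u')$ and $\beta_n\to\beta(u')<\alpha(u')$, the primary process is eventually supercritical with $\mathbb{P}[W^n>0]=1-\beta_n/\alpha_n\to1-\beta(u')/\alpha(u')=\mathbb{P}[W^*>0]$. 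Meanwhile the almost sure convergence $W^n\to W^*$ (which holds on the coupling because (\ref{ascpc}) includes convergence at $t=\infty$) yields $1_{\{W^n>0,W^*>0\}}\to1_{\{W^*>0\}}$ almost surely, hence $\mathbb{P}[W^n>0,W^*>0]\to\mathbb{P}[W^*>0]$ by dominated convergence. Subtracting gives $\mathbb{P}[W^*=0,W^n>0]=\mathbb{P}[W^n>0]-\mathbb{P}[W^n>0,W^*>0]\to0$.

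The genuine obstacle is the event $\{W^n=0,\sigma_n<\infty\}$: the primary population dies out, yet the total still reaches $n$. I would decompose according to how $n$ cells are reached. If the primary alone ever reaches $n$, then the strong Markov property at its hitting time of $n$, combined with extinction from level $n$, bounds the contribution by $(\beta_n/\alpha_n)^n\to0$. Otherwise the primary stays below $n$ forever, so the total can reach $n$ only if at least one mutation is ever seeded, and I would bound the probability of this by the expected number of mutations seeded over the lifetime of an extinct primary. Conditionally on the primary trajectory this expected number is of order $(1-p_n(u,u))\int_0^\infty Z^n(s)\,ds$, with $1-p_n(u,u)=O(\mu_n)$. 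The key quantitative input is therefore a uniform bound on $\mathbb{E}[1_{\{W^n=0\}}\int_0^\infty Z^n(s)\,ds]$: using that a supercritical birth–death process conditioned on extinction is the subcritical birth–death process with birth and death rates interchanged, this expectation equals $\beta_n/(\alpha_n\lambda_n)$, which converges to the finite constant $\beta(u')/(\alpha(u')\lambda)$. Thus the mutation-driven probability is $O(\mu_n)\to0$, and both pieces of $\{W^n=0,\sigma_n<\infty\}$ vanish. The main obstacle, as anticipated, is exactly this last estimate: one must rule out that a vanishingly rare mutation nevertheless founds a lineage dragging the total up to $n$, and the control comes entirely from the integrated size of a conditioned-to-die primary population having expectation bounded uniformly in $n$.
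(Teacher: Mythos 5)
Your overall architecture is the same as the paper's: both parts are routed through the intermediary $W^n$ of Remark \ref{marlim}, with the decompositions $\{W^*>0,\sigma_n=\infty\}\subseteq\{W^*>0,W^n=0\}\cup\{W^n>0,\sigma_n=\infty\}$ and $\{W^*=0,\sigma_n<\infty\}\subseteq\{W^*=0,W^n>0\}\cup\{W^n=0,\sigma_n<\infty\}$, and with the same two key estimates for the hard piece: extinction from level $n$ costs $(\beta_n/\alpha_n)^n$, and the expected number of mutations seeded by an extinct primary vanishes because the primary conditioned on extinction has mean $\mathbb{E}[Z^n(s)\,|\,W^n=0]=e^{-\lambda_n s}$; your identity $\mathbb{E}\bigl[1_{\{W^n=0\}}\int_0^\infty Z^n(s)\,ds\bigr]=\beta_n/(\alpha_n\lambda_n)$ is exactly the computation in the paper's Lemma \ref{wnzs}, phrased unconditionally. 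Your Part 1 via Lemma \ref{cwd} is correct and is the same mechanism as the paper's Lemmas \ref{wwnz} and \ref{wnsi} (eventually $W^n>W^*/2>0$, and a surviving primary forces $\sigma_n<\infty$). The one genuinely different sub-argument is your treatment of $\{W^*=0,W^n>0\}$: you compare survival probabilities, $\mathbb{P}[W^n>0]=1-\beta_n/\alpha_n\to\mathbb{P}[W^*>0]$ together with $\mathbb{P}[W^n>0,W^*>0]\to\mathbb{P}[W^*>0]$ from the coupling, and subtract; the paper's Lemma \ref{wzwn} instead uses the explicit law of $W^n$ to bound $\mathbb{P}[W^n\in(0,\epsilon)]\leq\epsilon$ and runs an $\epsilon$-argument. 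Both are valid; yours is arguably cleaner, at the price of invoking the classical fact that $\{W^n>0\}$ coincides almost surely with survival of the primary process.

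The one imprecision is your accounting of double mutations inside $\{W^n=0,\sigma_n<\infty\}$. The claim that, conditionally on the primary trajectory, the expected number of seeded mutations is of order $(1-p_n(u,u))\int_0^\infty Z^n(s)\,ds$ is literally true only for single-mutation seedings, which in the construction are Poisson with intensity proportional to $Z^n$ given the trajectory. Double mutations are instead attached to the \emph{downsteps} of $Z^n$ (each downstep is independently a double mutation with probability $(\beta_n-\beta(u'))/\beta_n$), so they are controlled by a count of jumps, not by the time integral. The gap is easy to close within your own framework: the expected number of downsteps of $Z^n$ conditioned on extinction equals $\beta_n/\lambda_n+1$, which is uniformly bounded in $n$, while the per-downstep double-mutation probability is $O(n^{-2})$; alternatively one can cite Lemma \ref{kyztz} and divide by $\mathbb{P}[Z^n(\tau_n)=0]\to\mathbb{P}[W^*=0]$, as the paper does (noting that the whole statement is trivial when $\beta(u')=0$, so this limit may be assumed positive). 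With either patch your argument is complete.
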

Let's break the proof of Proposition \ref{stwmp} into several lemmas; the idea is that the random variable $W^n$ be used as an intermediary.
\begin{lemma}\label{wwnz}
\[
\lim_{n\rightarrow\infty}\mathbb{P}[W^*>0, W^n=0]=0.
\]
\begin{proof}
If $W^*>0$, then there exists $n_0$, such that for all $n\geq n_0$
\[
W^n>\frac{W^*}{2}.
\]
So
\[
\lim_{n\rightarrow\infty}1_{\{W^*>0, W^n=0\}}= 0.
\]
Therefore, by dominated convergence,
\[
\mathbb{P}[W^*>0, W^n=0]=\mathbb{E} 1_{\{W^*>0, W^n=0\}}\rightarrow 0.
\]

\end{proof}
\end{lemma}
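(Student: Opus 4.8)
The plan is to extract from the almost sure functional convergence (\ref{ascpc}) the pointwise statement that $W^n\to W^*$ almost surely, and then to run a bounded convergence argument on the indicator $1_{\{W^*>0,\,W^n=0\}}$.

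First I would observe that (\ref{ascpc}) asserts almost sure convergence of $(e^{-\lambda_n t}Z^n(t))_{t\in[0,\infty]}$ to $(e^{-\lambda t}Z^*(t))_{t\in[0,\infty]}$ on $\mathbb{D}([0,\infty],\mathbb{R})$, a space which crucially \emph{includes} the point at infinity. Under the identification of this space with $\mathbb{D}([0,1],\mathbb{R})$ through the map $\omega$, the value at $t=\infty$ corresponds to the value at the right endpoint $s=1$, since $\omega(1)=\infty$. The key observation is that endpoint evaluation $f\mapsto f(1)$ is continuous in the Skorokhod ($J_1$) topology on a compact interval: if $f_k\to f$ via time changes fixing the endpoints, then $|f_k(1)-f(1)|\leq\sup_s|f_k(\lambda_k(s))-f(s)|\to 0$ because $\lambda_k(1)=1$. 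Recalling from Remark \ref{marlim} that $e^{-\lambda_n\infty}Z^n(\infty)=W^n$ and $e^{-\lambda\infty}Z^*(\infty)=W^*$ are precisely these endpoint values, I conclude that $W^n\to W^*$ almost surely.

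With $W^n\to W^*$ in hand, I would fix a realization in the event $\{W^*>0\}$. Since $W^n$ converges to the strictly positive number $W^*$, there is an $n_0$ (depending on the realization) with $W^n>W^*/2>0$ for all $n\geq n_0$; in particular $W^n\neq 0$ eventually. Hence on $\{W^*>0\}$ the indicator $1_{\{W^*>0,\,W^n=0\}}$ vanishes for all large $n$, while off $\{W^*>0\}$ it is identically zero. Therefore $1_{\{W^*>0,\,W^n=0\}}\to 0$ almost surely.

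Finally, as the indicators are bounded by $1$, dominated convergence yields
\[
\mathbb{P}[W^*>0,\,W^n=0]=\mathbb{E}\,1_{\{W^*>0,\,W^n=0\}}\rightarrow 0,
\]
as required. The only genuinely delicate point is the first step, transferring the functional convergence to convergence of the terminal limits $W^n\to W^*$; this is exactly where the inclusion of the point at infinity in $\mathbb{D}([0,\infty],\mathbb{R})$ and the continuity of endpoint evaluation there are used, and it is what the bare statement ``there exists $n_0$ such that $W^n>W^*/2$'' silently relies upon. Everything afterwards is routine.
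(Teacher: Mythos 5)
Your proposal is correct and follows essentially the same route as the paper: pointwise vanishing of the indicator $1_{\{W^*>0,\,W^n=0\}}$ followed by dominated convergence. The only difference is that you explicitly justify the step the paper leaves implicit --- that the almost sure convergence (\ref{ascpc}) on $\mathbb{D}([0,\infty],\mathbb{R})$ yields $W^n\to W^*$ almost surely via continuity of evaluation at the (fixed) endpoint under the Skorokhod topology --- which is a welcome clarification rather than a deviation.
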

\begin{lemma}\label{wnsi}
\[
\mathbb{P}[W^n>0, \sigma_n=\infty]=0.
\]
\begin{proof}
If $W^n>0$, then $\lim_{t\rightarrow\infty}X^n(t)=\infty$, and so $\sigma_n<\infty$.
\end{proof}
\end{lemma}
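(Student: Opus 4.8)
The plan is to read the event $\{W^n>0\}$ as the survival event of the primary birth-death process $Z^n(\cdot)$, and then to argue that survival of the primary subpopulation alone already forces the total population to reach $n$ cells in finite time. First I would pin down what $W^n>0$ means for the trajectory of $Z^n$. Since $e^{-\lambda_n t}Z^n(t)$ is a nonnegative martingale (the standard martingale for a birth-death process with net rate $\lambda_n=\alpha_n-\beta_n$), its limit $W^n$ exists almost surely, as recorded in Remark~\ref{marlim}. When $\lambda_n>0$ — which holds for all large $n$ since $\lambda_n\to\lambda>0$ — the primary process is supercritical, the event $\{W^n>0\}$ agrees up to a null set with non-extinction of $Z^n$, and on that event $Z^n(t)=e^{\lambda_n t}(W^n+o(1))\to\infty$. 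When $\lambda_n\le 0$ the primary process is (sub)critical and dies out almost surely, forcing $W^n=0$ almost surely, so that $\{W^n>0\}$ is null and the claimed identity is trivial. Hence in every case, almost surely on $\{W^n>0\}$ one has $Z^n(t)\to\infty$.

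Next I would transfer this unbounded growth to the total population and exploit discreteness. From the representation (\ref{repr}), $\mathcal{X}^n(t)=Z^n(t)e_u+(\text{nonnegative mutant terms})$, so coordinatewise $\mathcal{X}^n(t)\ge Z^n(t)e_u$ and therefore $|\mathcal{X}^n(t)|\ge Z^n(t)\to\infty$ on $\{W^n>0\}$. Now $|\mathcal{X}^n(\cdot)|$ starts at $1$ and, by the construction, changes only through single cell divisions (which replace one cell by two — a $+1$ jump, covering primary births, single mutations, and the double-mutation downsteps of $Z^n$) and single cell deaths (a $-1$ jump). As the driving clocks are independent, no two jumps coincide almost surely, so every jump of $|\mathcal{X}^n(\cdot)|$ has size exactly one. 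A right-continuous integer-valued path that starts below $n$ and tends to $+\infty$ by unit steps must attain the value $n$; hence $\sigma_n<\infty$. This shows $\{W^n>0\}\subseteq\{\sigma_n<\infty\}$ up to a null set, which gives $\mathbb{P}[W^n>0,\sigma_n=\infty]=0$.

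The genuinely delicate points are the two I would be most explicit about: establishing $W^n>0\Rightarrow Z^n(t)\to\infty$ uniformly across all regimes of $\lambda_n$ (using martingale positivity to identify $\{W^n>0\}$ with non-extinction in the supercritical case, and triviality otherwise), and the unit-jump bookkeeping that guarantees an escaping integer-valued path hits $n$ rather than leaps over it. Both are standard, but they are where the one-line argument in the statement must be unpacked.
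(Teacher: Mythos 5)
Your proof is correct and takes essentially the same route as the paper's one-line argument: $W^n>0$ forces $Z^n(t)\to\infty$, hence $|\mathcal{X}^n(t)|\to\infty$, hence $\sigma_n<\infty$. The additional care you supply --- identifying $\{W^n>0\}$ with non-extinction across the regimes of $\lambda_n$, and the unit-jump bookkeeping guaranteeing that the integer-valued path $|\mathcal{X}^n(\cdot)|$ attains $n$ rather than skipping it --- is just a faithful unpacking of details the paper leaves implicit.
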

\begin{proof}[Proof of Part 1 of Proposition \ref{stwmp}]
\ba
\mathbb{P}[W^*>0,\sigma_n=\infty]&=&\mathbb{P}[W^*>0,\sigma_n=\infty,W^n=0]\\&&+\mathbb{P}[W^*>0,\sigma_n=\infty,W^n>0]\\
&\leq&\mathbb{P}[W^*>0,W^n=0]\\&&+\mathbb{P}[\sigma_n=\infty,W^n>0]\rightarrow0
\ea
as $n\rightarrow\infty$, by Lemmas \ref{wwnz} and \ref{wnsi}.
\end{proof}
The structure for the proof of Part 2 of Proposition \ref{stwmp} is much the same as that of Part 1. However the details will require a little extra work.
\begin{lemma}\label{wzwn}
\[
\lim_{n\rightarrow\infty}\mathbb{P}[W^*=0, W^n>0]=0.
\]
\begin{proof}
Let $\epsilon>0$. If $W^*=0$, then there exists $n_0$ such that for all $n\geq n_0$
\[
W^n<\epsilon.
\]
So
\[
\lim_{n\rightarrow\infty}1_{\{W^*=0, W^n\geq\epsilon\}}=0
\]
almost surely. Then by dominated convergence,
$$
\lim_{n\rightarrow\infty}\mathbb{P}[W^*=0, W^n\geq\epsilon]=0.
$$Meanwhile for each $n$,
$$
\mathbb{P}[W^n\in(0,\epsilon)]=\frac{\lambda_n}{\alpha_n}\left(1-e^{-\frac{\lambda_n}{\alpha_n}\epsilon}\right)\leq\epsilon
$$
(the distribution of $W^n$ is seen in \cite{an,ca}). Therefore
\ba
&&\limsup_{n\rightarrow\infty}\mathbb{P}[W^*=0, W^n>0]\\&&\leq\limsup_{n\rightarrow\infty}\mathbb{P}[W^*=0, W^n\geq\epsilon]+\limsup_{n\rightarrow\infty}\mathbb{P}[W^n\in(0,\epsilon)]\\
&&\leq\epsilon.
\ea
But $\epsilon>0$ was arbitrary, giving the result.
\end{proof}
\end{lemma}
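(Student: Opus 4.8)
The plan is to exploit the almost sure convergence $W^n\to W^*$, which follows from the coupling (\ref{ascpc}) (equivalently from Lemma \ref{jointconvergencefors} together with the Skorokhod representation): since convergence in $\mathbb{D}([0,\infty],\mathbb{R})$ pins down the values at the endpoint $t=\infty$, Remark \ref{marlim} gives $W^n\to W^*$ almost surely. One would like to conclude immediately that $1_{\{W^n>0\}}\to 1_{\{W^*>0\}}$, but this fails at the boundary: the law of the martingale limit of a supercritical birth--death process is a mixture of an atom at $0$ and an absolutely continuous part on $(0,\infty)$, so $W^n$ can be strictly positive yet arbitrarily small while $W^*=0$. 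The event $\{W^*=0,\,W^n>0\}$ therefore genuinely occurs, and the task is to show that its probability is asymptotically negligible.

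The mechanism I would use is to separate ``$W^n$ bounded away from $0$'' from ``$W^n$ just above $0$''. Fix $\epsilon>0$ and write
\[
\mathbb{P}[W^*=0,\,W^n>0]\le\mathbb{P}[W^*=0,\,W^n\ge\epsilon]+\mathbb{P}[0<W^n<\epsilon].
\]
For the first term, on $\{W^*=0\}$ the almost sure convergence $W^n\to 0$ forces $1_{\{W^*=0,\,W^n\ge\epsilon\}}\to 0$ almost surely, so dominated convergence kills it as $n\to\infty$; this is the analogue, for the present event, of the dominated-convergence step carried out in Lemma \ref{wwnz}.

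The second term is where the boundary atom must be controlled quantitatively, and this is the crux. Here I would invoke the explicit law of $W^n$ for a birth--death process with rates $\alpha_n>\beta_n$ (as recorded in \cite{an,ca}): a point mass $\beta_n/\alpha_n$ at $0$ together with an exponential absolutely continuous part, yielding
\[
\mathbb{P}[0<W^n<\epsilon]=\frac{\lambda_n}{\alpha_n}\left(1-e^{-\frac{\lambda_n}{\alpha_n}\epsilon}\right)\le\epsilon,
\]
where the bound uses $1-e^{-x}\le x$ and $\lambda_n/\alpha_n\le 1$, and is uniform in $n$. Combining the two displays gives $\limsup_{n\to\infty}\mathbb{P}[W^*=0,\,W^n>0]\le\epsilon$, and letting $\epsilon\downarrow 0$ finishes the proof.

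The main obstacle, as indicated, is precisely the atom of $W^n$ at the origin: almost sure convergence of $W^n$ to $W^*$ says nothing about whether a vanishing $W^n$ is exactly zero or merely small, so the qualitative limit theorem alone cannot close the gap. What rescues the argument is the uniform $O(\epsilon)$ bound on the mass that the law of $W^n$ places on $(0,\epsilon)$ — a quantitative input drawn from the known distribution of the branching-process martingale limit rather than from any convergence statement.
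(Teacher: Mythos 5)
Your proposal is correct and follows essentially the same argument as the paper: the same decomposition of $\{W^*=0,\,W^n>0\}$ into $\{W^*=0,\,W^n\geq\epsilon\}$ (handled by almost sure convergence plus dominated convergence) and $\{0<W^n<\epsilon\}$ (handled by the explicit law of $W^n$ giving the uniform bound $\tfrac{\lambda_n}{\alpha_n}\bigl(1-e^{-\frac{\lambda_n}{\alpha_n}\epsilon}\bigr)\leq\epsilon$), followed by letting $\epsilon\downarrow0$. Your additional remarks correctly identify the atom at zero as the reason a purely qualitative convergence argument cannot suffice, which matches the structure of the paper's proof.
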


\begin{lemma}\label{wnzs}
\[
\lim_{n\rightarrow\infty}\mathbb{P}[W^n=0,\sigma_n<\infty]=0.
\]
\begin{proof}If the primary population size never reaches $n$ and there are never any mutations, then the total population size never reaches $n$. That is, if $Z^n(\tau_n)=0$, $K_v^n(\cdot)=0$ and $K^n_{v,w}(\cdot)=0$ for all $v,w\in\mathcal{G}\backslash\{u\}$, then
\[
\sup_{t\geq0}|\mathcal{X}^n(t)|<n,
\]
which means that $\sigma_n=\infty$. Equivalently,
\ba
\{\sigma_n<\infty\}&\subset&\{Z^n(\tau_n)=n\}\cup\{\exists v,K_v^n(\cdot)\not=0\}\cup\{\exists (v,w),K_{v,w}^n(\cdot)\not=0\}\\
&=&\{Z^n(\tau_n)=n\}\cup\{\exists v,K_v^n(\cdot)\not=0\}\\
&&\cup\{\exists (v,w),K_{v,w}^n(\cdot)\not=0,Z^n(\tau_n)=0\},
\ea
where the equality relies on the fact that $\{Z^n(\tau_n)=0\}\cup\{Z^n(\tau_n)=n\}$ covers the whole probability space.
It follows that
\bq
\mathbb{P}[W^n=0,\sigma_n<\infty]&\leq&\mathbb{P}[W^n=0|Z^n(\tau_n)=n]\nonumber\\
&&+\sum_{v\in\mathcal{G}\backslash\{u\}}\mathbb{P}[ K^n_v(\cdot)\not=0|W^n=0]\nonumber\\
&&+\sum_{v,w\in\mathcal{G}\backslash\{u\}}\mathbb{P}[ K^n_{v,w}(\cdot)\not=0|Z^n(\tau_n)=0].\label{terms}
\eq
We will show that each term of the right hand side of Inequality (\ref{terms}) converges to zero. Firstly,
\ba
\mathbb{P}[W^n=0|Z^n(\tau_n)=n]&=&\left(\frac{\beta_n}{\alpha_n}\right)^n,
\ea
which is the probability that $Z^n(\cdot)$, if starting at size $n$, eventually goes extinct; this clearly converges to zero.

Secondly,
\ba
&&\mathbb{E}\left[\sup_tK_v^n(t)\Big|W^n=0\right]\\
&&=\mathbb{E}\left[N_v^n\left(2p_n(u,v)\alpha(u')\int_0^\infty Z^n(s)ds\right)\Big|W^n=0\right]\\
&&=\mathbb{E}\bigg[\mathbb{E}\Big[N_v^n\left(2p_n(u,v)\alpha(u')\int_0^\infty Z^n(s)ds\right)\Big|Z^n(\cdot)\Big]\Big|W^n=0\bigg]\\
&&=\mathbb{E}\left[2p_n(u,v)\alpha(u')\int_0^\infty Z^n(s)ds\Big|W^n=0\right]\\
&&=2p_n(u,v)\alpha(u')\int_0^\infty \mathbb{E}[Z^n(s)|W^n=0]ds\\
&&=2p_n(u,v)\alpha(u')\int_0^\infty e^{-\lambda_ns}ds\\
&&\rightarrow0,
\ea
because $p_n(u,v)\rightarrow0$. Hence
\[
\mathbb{P}\left[\sup_tK_v^n(t)\not=0\Big|W^n=0\right]\rightarrow0.
\]
Lastly,
\ba
\mathbb{P}[K^n_{v,w}(\cdot)\not=0|Z^n(\tau_n)=0]&=&\mathbb{P}[K^n_{v,w}(\tau_n)\not=0|Z^n(\tau_n)=0]\\
&\leq& \frac{\mathbb{P}[ K^n_{v,w}(\tau_n)\not=0]}{\mathbb{P}[Z^n(\tau_n)=0]}.
\ea
But $\mathbb{P}[K^n_{v,w}(\tau_n)\not=0]$ converges to zero by Lemma \ref{kyztz}, while $\mathbb{P}[Z^n(\tau_n)=0]$ converges to $\mathbb{P}[W^*=0]>0$ by (\ref{ascpc}).
\end{proof}
\end{lemma}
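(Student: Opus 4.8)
The plan is to dominate the event $\{\sigma_n<\infty\}$ by a union of three rare events and then bound each of them on (or after conditioning on) extinction of the primary population. The starting point is structural and comes straight from the construction of $\mathcal{X}^n(\cdot)$: since $|\mathcal{X}^n(t)|=Z^n(t)+\sum_v\sum_{j}|\mathcal{Y}^n_{v,j}(t-T^n_{v,j})|+\sum_{v,w}\sum_j|\mathcal{Y}^n_{v,w,j}(t-T^n_{v,w,j})|$, if the primary population never reaches $n$ and no mutation ever occurs then $|\mathcal{X}^n(\cdot)|=Z^n(\cdot)$ never reaches $n$ either, so $\sigma_n=\infty$. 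Taking the contrapositive, $\{\sigma_n<\infty\}\subset\{Z^n(\tau_n)=n\}\cup\{\exists v:K^n_v(\cdot)\not=0\}\cup\{\exists (v,w):K^n_{v,w}(\cdot)\not=0\}$. Intersecting the last set with the partition $\{Z^n(\tau_n)=0\}\cup\{Z^n(\tau_n)=n\}$ and absorbing the $\{Z^n(\tau_n)=n\}$ piece into the first set, I obtain the bound (\ref{terms}) with the double-mutation term carrying the extra condition $\{Z^n(\tau_n)=0\}$, which is convenient because a double mutation can only occur at a primary downstep.

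First I would dispatch the primary-reaches-$n$ term. On $\{Z^n(\tau_n)=n\}$ the primary population is a supercritical birth--death process started from $n$, so its conditional extinction probability is exactly $(\beta_n/\alpha_n)^n$; since $\beta_n/\alpha_n\to\beta(u')/\alpha(u')<1$, this vanishes. Next I would handle the single-mutation term by a first-moment estimate conditioned on $\{W^n=0\}$. Using $K^n_v(t)=N_v(2p_n(u,v)\alpha(u')\int_0^tZ^n(s)\,ds)$ and the tower property, $\mathbb{E}[\sup_t K^n_v(t)\mid W^n=0]=2p_n(u,v)\alpha(u')\int_0^\infty\mathbb{E}[Z^n(s)\mid W^n=0]\,ds$. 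The crucial input is the branching-process identity $\mathbb{E}[Z^n(s)\mid W^n=0]=e^{-\lambda_n s}$, making the integral equal to $1/\lambda_n$, so the whole expectation is $2p_n(u,v)\alpha(u')/\lambda_n\to0$ because $p_n(u,v)\to0$. Markov's inequality then sends $\mathbb{P}[\sup_tK^n_v(t)\not=0\mid W^n=0]$ to zero, and there are only finitely many $v$ to sum over.

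For the double-mutation term I would condition on $\{Z^n(\tau_n)=0\}$. Because double mutations coincide with primary downsteps and no downsteps occur once the primary population is extinct, $K^n_{v,w}(\cdot)=K^n_{v,w}(\tau_n)$ on this event, giving $\mathbb{P}[K^n_{v,w}(\cdot)\not=0\mid Z^n(\tau_n)=0]\le\mathbb{P}[K^n_{v,w}(\tau_n)\not=0]/\mathbb{P}[Z^n(\tau_n)=0]$. The numerator tends to zero by Lemma \ref{kyztz}, while the denominator converges to $\mathbb{P}[W^*=0]>0$ through the almost-sure convergence (\ref{ascpc}). Summing the three vanishing bounds yields the claim.

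I expect the main obstacle to be the single-mutation term, and specifically justifying $\mathbb{E}[Z^n(s)\mid W^n=0]=e^{-\lambda_n s}$: one must invoke that a supercritical birth--death process conditioned on extinction is distributed as its dual subcritical process (birth and death rates swapped), whose mean decays at rate $\lambda_n=\alpha_n-\beta_n$. The remaining care lies in keeping the two different notions of extinction straight — $\{W^n=0\}$ (ultimate extinction) is the right conditioning for the $K^n_v$ first-moment estimate, whereas $\{Z^n(\tau_n)=0\}$ (extinction before reaching $n$) is what makes the double-mutation contribution a finite, time-truncated object controllable by Lemma \ref{kyztz}.
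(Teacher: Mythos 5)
Your proposal is correct and follows essentially the same route as the paper's proof: the identical three-event decomposition of $\{\sigma_n<\infty\}$ via the partition $\{Z^n(\tau_n)=0\}\cup\{Z^n(\tau_n)=n\}$, the extinction probability $(\beta_n/\alpha_n)^n$ for the first term, the first-moment bound with $\mathbb{E}[Z^n(s)\mid W^n=0]=e^{-\lambda_n s}$ plus Markov for the single-mutation term, and the Lemma \ref{kyztz} bound with $\mathbb{P}[Z^n(\tau_n)=0]\to\mathbb{P}[W^*=0]>0$ for the double-mutation term. Your added justifications (duality for the conditioned mean, and $K^n_{v,w}(\cdot)=K^n_{v,w}(\tau_n)$ on extinction because double mutations coincide with primary downsteps) are exactly the facts the paper uses implicitly.
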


\begin{proof}[Proof of Part 2 of Proposition \ref{stwmp}]
Just as for Part 1,
\ba
\mathbb{P}[W^*=0,\sigma_n<\infty]&=&\mathbb{P}[W^*=0,\sigma_n<\infty,W^n>0]\\&&+\mathbb{P}[W^*=0,\sigma_n<\infty,W^n=0]\\
&\leq&\mathbb{P}[W^*=0,W^n<0]\\&&+\mathbb{P}[\sigma_n<\infty,W^n=0]\rightarrow0
\ea
as $n\rightarrow\infty$, by Lemmas \ref{wzwn} and \ref{wnzs}.
\end{proof}
\begin{cor}[to Proposition \ref{stwmp}]\label{cortws}For any sequence of events $(H_n)_{n\in\mathbb{N}}$,
\[
\lim_{n\rightarrow\infty}\mathbb{P}[H_n,\sigma_n<\infty]=\lim_{n\rightarrow\infty}\mathbb{P}[H_n,W^*>0]
\]
if the limit exists.
\begin{proof}Partition the event $\{H_n\cap(W^*>0\cup\sigma_n<\infty)\}$ in two ways to obtain
\ba
&&\mathbb{P}[H_n,\sigma_n<\infty]+\mathbb{P}[H_n,W^*>0,\sigma_n=\infty]\\&&=\mathbb{P}[H_n,W^*>0]+\mathbb{P}[H_n,W^*=0,\sigma_n<\infty],
\ea
and take $n\rightarrow\infty$.
\end{proof}
\end{cor}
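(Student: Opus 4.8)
The plan is to obtain the corollary as an essentially immediate consequence of Proposition \ref{stwmp}, the point being that the two parts of that proposition assert precisely that the events $\{\sigma_n<\infty\}$ and $\{W^*>0\}$ coincide up to a set of vanishing probability. Their symmetric difference is the disjoint union
\[
\left(\{W^*>0\}\cap\{\sigma_n=\infty\}\right)\cup\left(\{W^*=0\}\cap\{\sigma_n<\infty\}\right),
\]
and Proposition \ref{stwmp} says exactly that the probability of each of these two pieces tends to zero. So the role of the corollary is simply to promote this statement about the two events themselves into a statement that holds after intersecting with an arbitrary $H_n$.

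The key step is the elementary bound
\[
\left|\mathbb{P}[H_n,\sigma_n<\infty]-\mathbb{P}[H_n,W^*>0]\right|\leq\mathbb{P}[W^*=0,\sigma_n<\infty]+\mathbb{P}[W^*>0,\sigma_n=\infty].
\]
To see this I would split each of the two probabilities on the left according to whether the other event occurs, writing $\mathbb{P}[H_n,\sigma_n<\infty]=\mathbb{P}[H_n,\sigma_n<\infty,W^*>0]+\mathbb{P}[H_n,\sigma_n<\infty,W^*=0]$ and similarly $\mathbb{P}[H_n,W^*>0]=\mathbb{P}[H_n,W^*>0,\sigma_n<\infty]+\mathbb{P}[H_n,W^*>0,\sigma_n=\infty]$. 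The two ``matching'' terms $\mathbb{P}[H_n,\sigma_n<\infty,W^*>0]$ cancel in the difference, leaving only the two ``mismatch'' terms, each of which is bounded above by the corresponding unconditioned probability from Proposition \ref{stwmp}. (Equivalently, one can argue via the single partition of $H_n\cap(\{W^*>0\}\cup\{\sigma_n<\infty\})$ decomposed two ways, which yields the same cancellation.)

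Finally I would take $n\rightarrow\infty$: both terms on the right vanish by Parts 1 and 2 of Proposition \ref{stwmp}, so the two sequences $\mathbb{P}[H_n,\sigma_n<\infty]$ and $\mathbb{P}[H_n,W^*>0]$ differ by a quantity tending to zero, whence one limit exists if and only if the other does and they are equal. I do not anticipate any real obstacle here: all of the substantive work is already contained in Proposition \ref{stwmp}, and the only thing to be careful about is that the displayed bound is \emph{uniform in} $H_n$, since its right-hand side does not involve $H_n$ at all, so no hypothesis on the sequence $(H_n)$ beyond measurability is needed.
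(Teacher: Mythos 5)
Your proposal is correct and follows essentially the same route as the paper: the paper partitions $H_n\cap(\{W^*>0\}\cup\{\sigma_n<\infty\})$ in two ways to get the identity $\mathbb{P}[H_n,\sigma_n<\infty]+\mathbb{P}[H_n,W^*>0,\sigma_n=\infty]=\mathbb{P}[H_n,W^*>0]+\mathbb{P}[H_n,W^*=0,\sigma_n<\infty]$ and lets $n\rightarrow\infty$, which is exactly the cancellation you describe (and you even note this equivalent phrasing yourself). Your observation that the error bound is uniform in $H_n$, since the right-hand side does not involve $H_n$, is the same point the paper exploits implicitly by bounding the mismatch terms via Proposition \ref{stwmp}.
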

Finally we are in a position to prove Theorem \ref{mt}.
\begin{proof}[Proof of Theorem \ref{mt}]For any $R\subset(\mathbb{N}_0)^{\mathcal{G}\backslash\{u\}}$,
\bq\label{mtaf}
&&\lim_{n\rightarrow\infty}\frac{\mathbb{P}\left[\left(X^{\mu_n}_v(\sigma_n)\right)_{v\in\mathcal{G}\backslash\{u\}}\in R,\sigma_n<\infty\right]}{\mathbb{P}\left[\sigma_n<\infty\right]}\nonumber\\
&&=\lim_{n\rightarrow\infty}\frac{\mathbb{P}\left[\left(X^{\mu_n}_v(\sigma_n)\right)_{v\in\mathcal{G}\backslash\{u\}}\in R,W^*>0\right]}{\mathbb{P}\left[W^*>0\right]}\label{talc}\\
&&=\mathbb{P}[(X_v)_{v\in\mathcal{G}\backslash\{u\}}\in R],\label{titfr}
\eq
where (\ref{talc}) is due to Corollary \ref{cortws} and (\ref{titfr}) is due to Lemma \ref{coxr}.

\end{proof}
\subsection{Law of large numbers}\label{llnpss}
\begin{proof}[Proof of Theorem \ref{llnws} (and Theorem \ref{llnldd})]
The expected site frequency spectrum is given by
\bq
\mathbb{E}\left[|\mathcal{S}|^{-1}\sum_{i\in\mathcal{S}}\delta_{B_i^{n,\mu}}\{k\}\right]&=&|\mathcal{S}|^{-1}\sum_{i\in\mathcal{S}}\mathbb{P}[B_i^{n,\mu}=k]\nonumber\\
&=&|\mathcal{S}|^{-1}\sum_{i\in\mathcal{S}_\text{sel}}\mathbb{P}[B_i^{n,\mu}=k]\nonumber\\
&&+|\mathcal{S}|^{-1}\sum_{j\in J}\sum_{i\in\mathcal{S}(j)}\mathbb{P}[B_i^{n,\mu}=k],\label{sfsllnpt}
\eq
for $k\in\mathbb{N}_0$. The penultimate term of (\ref{sfsllnpt}) vanishes:
$$|\mathcal{S}|^{-1}\sum_{i\in\mathcal{S}_\text{sel}}\mathbb{P}[B_i^{n,\mu}=k]\rightarrow0$$ because $|\mathcal{S}_\text{sel}|/|\mathcal{S}|\rightarrow0$. The last term of (\ref{sfsllnpt}) can be written as 
$$
|\mathcal{S}|^{-1}\sum_{j\in J}\sum_{i\in\mathcal{S}(j)}\mathbb{P}[B_i^{n,\mu}=k]=\sum_{j\in J}\frac{|\mathcal{S}(j)|}{|\mathcal{S}|}\mathbb{P}[B^{n,\mu}(j)=k],
$$
where $\mathbb{P}[B^{n,\mu}(j)=k]=\mathbb{P}[B^{n,\mu}_i=k]$ for $i\in\mathcal{S}(j)$. But $$\frac{|\mathcal{S}(j)|}{|\mathcal{S}|}\rightarrow q(j),$$ while Theorem \ref{ilddhmrs} implies that
$$\mathbb{P}[B^{n,\mu}(j)=k]\rightarrow\Lambda(j)\{k\}.$$
Therefore the expected site frequency spectrum converges:
$$
\mathbb{E}\left[|\mathcal{S}|^{-1}\sum_{i\in\mathcal{S}}\delta_{B_i^{n,\mu}}\{k\}\right]\rightarrow\sum_{j\in J}q(j)\Lambda(j)\{k\}.
$$
The variance is
\ba
\text{Var}\left[|\mathcal{S}|^{-1}\sum_{i\in\mathcal{S}}\delta_{B_i^{n,\mu}}\{k\}\right]&=&|\mathcal{S}|^{-2}\sum_{i\in\mathcal{S}}\text{Var}[1_{\{B_i^{n,\mu}=k\}}]\\
&&+|\mathcal{S}|^{-2}\sum_{\substack{i,j\in\mathcal{S}\\i\not=j}}\text{Cov}[1_{\{B_i^{n,\mu}=k\}},1_{\{B_j^{n,\mu}=k\}}]\\
&\leq&|\mathcal{S}|^{-1}+\max_{\substack{i,j\in\mathcal{S}\\i\not=j}}\text{Cov}[1_{\{B_i^{n,\mu}=k\}},1_{\{B_j^{n,\mu}=k\}}].
\ea
Because $\mathcal{S}_\text{sel}$ and $J$ are finite sets and the random variables are exchangable over $\mathcal{S}(j)$, the maximum is taken over a finite set. Theorem \ref{ilddhmrs} says that the covariances converge to zero.
\end{proof}
\section{Mutations at positive fractions}\label{sec:mpfps}
In this section we return to the basic Yule process setting, proving results on mutations present in a positive fraction of cells. In Subsection \ref{sec:ssmfaub} we prove Theorem \ref{sslmf} and also prove an upper bound for mutation frequencies.  In \ref{sec:dcfs} we prove Lemma \ref{btst} and another result concerning cell descendant fractions. In \ref{sec:mfisa} we determine mutation frequencies \textit{under the infinite sites assumption}. In \ref{sec:isaia} we show that the infinite sites assumption can offer an approximation for mutation frequencies, concluding the proof of Theorem \ref{dslfm}. In \ref{pfrsfs} we give details of Remarks \ref{msflfl} and \ref{vsflfl}.

\subsection{Single site mutation frequencies and an upper bound}\label{sec:ssmfaub}Write
\[
\mathcal{B}_i^\mu=\{x\in\mathcal{T}:V_i^\mu(x)\not=u_i\}
\]
for the cells which are mutated at site $i\in\mathcal{S}$, and
\[
\hat{\mathcal{B}}_i^\mu=\{y\in\mathcal{T}:\exists x\in\mathcal{B}^\mu_i,x\preceq y\}
\]
for their descendants. Recall that $\mathcal{T}_{\sigma_n}$ are the cells alive when the total number of cells reaches $n$. Note the inequality
\bq\label{bbhatineq}
\hat{B}^{n,\mu}_i:=|\hat{\mathcal{B}}_i^\mu\cap\mathcal{T}_{\sigma_n}|\geq|\mathcal{B}_i^\mu\cap\mathcal{T}_{\sigma_n}|=B^{n,\mu}_i.
\eq
The goal of this subsection is to prove Theorem \ref{sslmf} and the following closely related result, which will later play a crucial role in the proof of Theorem \ref{dslfm}.
\begin{prop}\label{ssmfps}Let $i\in\mathcal{S}$ and $a\in(0,1)$. As $n\rightarrow\infty$ and $n\mu\rightarrow\theta\in[0,\infty)$,
\[
\mu^{-1}\mathbb{P}[n^{-1}\hat{B}_i^{n,\mu}>a]\rightarrow2(a^{-1}-1).
\]
\end{prop}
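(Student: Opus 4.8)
The plan is to prove Proposition \ref{ssmfps} together with Theorem \ref{sslmf} by a sandwich argument built on the inequality (\ref{bbhatineq}), $B_i^{n,\mu}\le\hat B_i^{n,\mu}$. Since $\mathbb{P}[n^{-1}B_i^{n,\mu}>a]\le\mathbb{P}[n^{-1}\hat B_i^{n,\mu}>a]$, it suffices to establish the two one-sided bounds
\[
\liminf_{n\to\infty}\mu^{-1}\mathbb{P}[n^{-1}B_i^{n,\mu}>a]\ge 2(a^{-1}-1),\qquad \limsup_{n\to\infty}\mu^{-1}\mathbb{P}[n^{-1}\hat B_i^{n,\mu}>a]\le 2(a^{-1}-1),
\]
which then squeeze both quantities to the common limit $2(a^{-1}-1)$. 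The central objects are the \emph{founders}: cells mutated at site $i$ whose parent is not. The minimal founders are pairwise incomparable, so their descendant subtrees partition $\hat{\mathcal{B}}_i^\mu\cap\mathcal{T}_{\sigma_n}$, giving $n^{-1}\hat B_i^{n,\mu}=\sum_x P_{x,\sigma_n}$ (sum over minimal founders $x$), where $P_{x,\sigma_n}=|\{y\in\mathcal{T}_{\sigma_n}:x\preceq y\}|/n$ is the descendant fraction of Lemma \ref{btst}.

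For the upper bound I would run a first-moment (Palm) calculation over founders. A founder appears at the division raising the population from $m-1$ to $m$ cells with probability asymptotic to $2\mu$ (each daughter mutates at site $i$ with probability $\mu$; simultaneous mutation of both daughters and the event that the dividing cell is already mutated are of smaller order). Conditionally on being born when the population has size $m$, the founder is one of $m$ exchangeable cells whose eventual descendant fractions are Dirichlet$(1,\dots,1)$; hence $P_{x,\sigma_n}$ converges, as $n\to\infty$ with $m$ fixed, to a Beta$(1,m-1)$ variable with $\mathbb{P}[P_{x,\sigma_n}>a]\to(1-a)^{m-1}$. Summing the expected number of founders with descendant fraction above $a$ gives
\[
\mu^{-1}\,\mathbb{E}\big[\#\{\text{founders } x:P_{x,\sigma_n}>a\}\big]\longrightarrow 2\sum_{m\ge 2}(1-a)^{m-1}=2(a^{-1}-1),
\]
exactly the target constant. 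To pass from this first moment to $\mathbb{P}[n^{-1}\hat B_i^{n,\mu}>a]$ one must discard the events where the threshold is crossed without a single founder exceeding it: either two founders each carry a macroscopic fraction (two early founding events, costing $O(\mu^2)=o(\mu)$ by a factorial-moment bound), or a large number of late, small-fraction founders conspire to sum past $a$ (a large deviation I would rule out by truncating at founders born after size $M$ and applying an exponential Chernoff estimate, then letting $M\to\infty$).

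For the lower bound I would run the complementary argument on $B_i^{n,\mu}$ itself. Fix $\delta>0$ and count founders whose clone has descendant fraction exceeding $a+\delta$ \emph{and} within which the reverse-mutated fraction is below $\delta$; any such founder forces $n^{-1}B_i^{n,\mu}>a$. The reverse-mutated fraction inside a clone has expectation of order $\mu\log n\to 0$, so this constraint is asymptotically free, and the expected count of such founders tends to $2\mu((a+\delta)^{-1}-1)$; subtracting the $O(\mu^2)$ pair term and sending $\delta\downarrow 0$ yields the matching lower bound $2(a^{-1}-1)$, closing the sandwich.

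The hard part will be the dependence introduced by the random time $\sigma_n$: the fractions $P_{x,\sigma_n}$ are read off at the instant the \emph{whole} population hits $n$, so they are not independent of the founding events, and the Beta$(1,m-1)$ limit must be obtained uniformly enough in $m$ to justify interchanging the limit with the sum over $m$. Equally delicate is the large-deviation control of the many small founders in the upper bound: their mean total contribution decays only like $\mu\log n$, so a naive Markov bound is too weak and the truncation-plus-Chernoff estimate is genuinely needed to reach the required $o(\mu)$.
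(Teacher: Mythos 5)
Your outline shares the paper's two essential ingredients --- founders are created at rate $\approx 2\mu$ per division, and a clone founded when the population has $m$ cells acquires a limiting Beta$(1,m-1)$ descendant fraction, whence $2\sum_{m\ge2}(1-a)^{m-1}=2(a^{-1}-1)$ --- but it is organized quite differently, and the difference matters. The paper never sums over all founders. It conditions on the population size $\xi^\mu=r$ at which the \emph{first} mutation at site $i$ occurs, and on the event $\Xi^\mu_1$ that this division creates exactly one mutant: Lemma \ref{fmdl} gives $\mu^{-1}\mathbb{P}[\xi^\mu=r,\Xi^\mu_1]\to2$ (the two-mutant case being negligible). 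Having extracted the factor $\mu$ once and for all, every remaining estimate only needs to be $o(1)$ \emph{conditionally}: writing $D^n$ for the Polya-urn count of descendants of the first mutant, the inequalities $D^n\le\hat B^{n,\mu}$ and $B^{n,\mu}\le\hat B^{n,\mu}$, together with the matching conditional means $\mathbb{E}[n^{-1}D^n\mid\xi^\mu=r,\Xi^\mu_1]=r^{-1}$ and $\mathbb{E}[n^{-1}\hat B^{n,\mu}\mid\xi^\mu=r,\Xi^\mu_1]\to r^{-1}$, force $n^{-1}D^n$, $n^{-1}B^{n,\mu}$ and $n^{-1}\hat B^{n,\mu}$ to share the tail limit $(1-a)^{r-1}$ in $L^1$ (Lemma \ref{ck1l}); the sum over $r$ is then justified by the uniform bound $\sup_n\mathbb{P}[n^{-1}\hat B^{n,\mu_n}>a\mid\xi^{\mu_n}=r,\Xi^{\mu_n}_j]\le cr^{-2}$, proved via a third-moment computation (Lemma \ref{df}). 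That single conditioning disposes, in one stroke, of everything you must fight separately: later founders, multiple macroscopic clones, and reverse mutations.

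By contrast, your unconditional Palm sum over all founders requires the bad events to have probability $o(\mu)$, not merely conditional probability $o(1)$, and this is strictly harder. As you yourself note, the total fraction carried by late small founders has mean of order $\mu\log n$, so Markov is useless and one needs a second-moment or exponential estimate for a sum of \emph{dependent} clone fractions read off at the random time $\sigma_n$; likewise your dominated-convergence step needs a uniform-in-$m$ tail bound, the analogue of Lemma \ref{df}. These two deferred estimates are where essentially all of the work in your route lies, and your proposal only gestures at them. The route can be completed --- after truncating at founders born when the population exceeds a fixed size $M$, a second-moment bound of order $\mu/M+\mu^2\log^2 n$ suffices and Chernoff is not actually needed, and the uniform tail bound can be obtained by a moment recursion as in Lemma \ref{df} --- so I would call your plan viable rather than flawed; but the paper's first-mutation conditioning buys exactly the reduction from $o(\mu)$ to $o(1)$ error control that makes its proof short, and that trick is absent from your proposal.
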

For this subsection we are always talking about a single site $i\in\mathcal{S}$; for convenience, let's drop the subscript $i$ from the notation. To begin the proofs of Theorem \ref{sslmf} and Proposition \ref{ssmfps}, fix $\mu$, and observe that $(B^{r,\mu})_{r\in\mathbb{N}}$ is a Markov process on the nonnegative integers with transition probabilities
\bq
&&\mathbb{P}[B^{r+1,\mu}=k|B^{r,\mu}=j]\nonumber\\&&\quad=\begin{cases}
\frac{j}{r}(\mu/3)^2,\quad &k=j-1;\\
\frac{j}{r}2(\mu/3)(1-\mu/3)+\frac{r-j}{r}(1-\mu)^2,\quad &k=j;\\
\frac{j}{r}(1-\mu/3)^2+\frac{r-j}{r}2\mu(1-\mu),\quad &k=j+1;\\
\frac{r-j}{r}\mu^2,\quad &k=j+2.
\end{cases}\label{transprobs}
\eq
Here, $j/r$ is the probability that one of the $j$ mutant cells divides, while $\mu/3$ is the probability that a mutant's daughter reverts to the unmutated state. The process $(\hat{B}^{r,\mu})_{r\in\mathbb{N}}$ has transition probabilities
\bq
&&\mathbb{P}[\hat{B}^{r+1,\mu}=k|\hat{B}^{r,\mu}=j]\nonumber\\&&\quad=\begin{cases}
\frac{r-j}{r}(1-\mu)^2,\quad &k=j;\\
\frac{j}{r}+\frac{r-j}{r}2\mu(1-\mu),\quad &k=j+1;\\
\frac{r-j}{r}\mu^2,\quad &k=j+2.
\end{cases}\label{transprobsb}
\eq
 The key idea of the proof will be to condition on the number of cells when the first mutant (with respect to site $i$) arises. For this purpose, introduce
\[
\xi^\mu=\min\{r\in\mathbb{N}:B^{r,\mu}>0\}
\]
for the total number of cells when the first mutant cell arises. Let
\[
\Xi^\mu_j=\{B^{\xi^\mu,\mu}=j\}
\]
be the event that the first cell division to see a mutation gives $j$ mutant cells, for $j\in\{1,2\}$.
\begin{lemma}\label{fmdl}For $r\in\mathbb{N}$,
\ba
\lim_{\mu\rightarrow0}\mu^{-1}\mathbb{P}[\xi^{\mu}=r,\Xi^{\mu}_j]=\begin{cases}2,\quad j=1;\\0,\quad j=2.\end{cases}
\ea
\end{lemma}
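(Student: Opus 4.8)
The plan is to exploit the Markov structure of $(B^{r,\mu})_{r\in\mathbb{N}}$ recorded in (\ref{transprobs}). Since the initial cell is unmutated at site $i$ by definition, the chain starts at $B^{1,\mu}=0$ and stays at $0$ until time $\xi^\mu$. The event $\{\xi^\mu=r,\Xi^\mu_j\}$ is exactly the event that the mutant count remains at $0$ through the transitions taking the population from $1$ up to $r-1$ cells, and then jumps to the value $j$ on the transition from $r-1$ to $r$ cells. So the whole computation reduces to reading off the one-step probabilities out of state $0$ and multiplying.

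First I would specialise (\ref{transprobs}) to $j=0$. Setting the current mutant count to zero annihilates the two terms carrying a factor $j/r$ and sends the factor $(r-j)/r$ to $1$; crucially this makes the resulting probabilities independent of the total cell count $r$. One is left with three possibilities: the count stays at $0$ with probability $(1-\mu)^2$, rises to $1$ with probability $2\mu(1-\mu)$ (one mutant daughter), and rises to $2$ with probability $\mu^2$ (a single division producing two mutant daughters). These are the only transitions in play before the first mutation, since the chain sits in state $0$ until $\xi^\mu$.

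Next, by the Markov property the probability factorises over the $r-2$ ``stay-at-$0$'' steps $1\to2,\dots,(r-2)\to(r-1)$ followed by the jump to $j$ on the step $(r-1)\to r$, giving
\[
\mathbb{P}[\xi^\mu=r,\Xi^\mu_1]=(1-\mu)^{2(r-2)}\,2\mu(1-\mu),\qquad
\mathbb{P}[\xi^\mu=r,\Xi^\mu_2]=(1-\mu)^{2(r-2)}\,\mu^2.
\]
Dividing by $\mu$ and sending $\mu\to0$ gives $2(1-\mu)^{2r-3}\to2$ when $j=1$ and $\mu(1-\mu)^{2(r-2)}\to0$ when $j=2$, which is the assertion.

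There is no analytic obstacle here; the content is bookkeeping. The one place to take care is the transition count---checking that exactly $r-2$ steps precede the mutation-producing step, and confirming the edge case $r=2$, where there are none and the formula collapses to $2\mu(1-\mu)$ directly. The only conceptual point worth emphasising is that the $j=2$ route carries two powers of $\mu$ against the single power for $j=1$; this is precisely why, at the scale $\mu^{-1}$, the double-mutation contribution vanishes while the single-mutation contribution survives, mirroring at the single-site level the ``neglecting double mutations'' phenomenon of Section \ref{dmecbn}.
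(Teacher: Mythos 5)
Your proposal is correct and follows essentially the same route as the paper: the paper likewise factors the probability as (no mutation in the first $r-2$ divisions) $\times$ (the $(r-1)$th division produces $j$ mutant daughters), obtaining $\mathbb{P}[\xi^\mu=r,\Xi^\mu_1]=2\mu(1-\mu)^{2r-3}$ and $\mathbb{P}[\xi^\mu=r,\Xi^\mu_2]=\mu^2(1-\mu)^{2r-4}$, identical to your expressions, and then divides by $\mu$ and lets $\mu\to0$. Reading the one-step probabilities out of state $0$ from (\ref{transprobs}) rather than arguing directly about cell divisions is only a cosmetic difference.
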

\begin{proof}The probability that the first $r-2$ cell divisions give no site $i$ mutations multiplied by the probability that the $(r-1)$th cell division gives exactly one mutant daughter is
\[
\mathbb{P}[\xi^{\mu}=r,\Xi^{\mu}_1]=(1-\mu)^{2r-3}2\mu.
\]
Similarly
\[
\mathbb{P}[\xi^{\mu}=r,\Xi^{\mu}_2]=(1-\mu)^{2r-4}\mu^2.
\]
Divide by $\mu$ and take $\mu\rightarrow0$.
\end{proof}
The next result gives conditional mutation frequencies.
\begin{lemma}\label{ck1l}Let $a>0$. As $n\rightarrow\infty$ and $n\mu\rightarrow\theta\in[0,\infty)$,
\[
\mathbb{P}[n^{-1}B^{n,\mu}>a\big|\xi^{\mu}=r,\Xi^{\mu}_1]\rightarrow(1-a)^{r-1}
\]
and
\[
\mathbb{P}[n^{-1}\hat{B}^{n,\mu}>a\big|\xi^{\mu}=r,\Xi^{\mu}_1]\rightarrow(1-a)^{r-1}
\]
\begin{proof}
Calculating from the transition probabilities (\ref{transprobs}),
\ba
\mathbb{E}\left[B^{s+1,\mu}|B^{s,\mu}=k\right]=k+s^{-1}k(1-8\mu/3)+2\mu.
\ea
So
\ba
\mathbb{E}\left[(s+1)^{-1}B^{s+1,\mu}|B^{s,\mu}=k\right]=s^{-1}k+ s^{-1}(s+1)^{-1}\left(2s-\frac{8}{3}k\right)\mu,
\ea
and hence
\bq\label{expbound}
s^{-1}k-2s^{-1}\mu&\leq&\mathbb{E}\left[(s+1)^{-1}B^{s+1,\mu}|B^{s,\mu}=k\right]\nonumber\\
&\leq& s^{-1}k+2s^{-1}\mu.
\eq
For the rest of the proof we will condition on the event $\{\xi^{\mu}=r,\Xi^{\mu}_1\}$. That is, we will consider the processes $(B^{s,\mu})_{s\geq r}$ and $(\hat{B}^{s,\mu})_{s\geq r}$ conditioned on $B^{r,\mu}=\hat{B}^{r,\mu}=1$. Write
$$\mathbb{E}_{r,1}[\cdot]=\mathbb{E}[\cdot|\xi^{\mu}=r,\Xi^{\mu}_1]$$
for the conditional expectation. From (\ref{expbound}), for $s\geq r$,

\bq\label{elubt}
\mathbb{E}_{r,1}\left[s^{-1}B^{s,\mu}\right]-2s^{-1}\mu&\leq&\mathbb{E}_{r,1}\left[(s+1)^{-1}B^{s+1,\mu}\right]\nonumber\\
&\leq&\mathbb{E}_{r,1}\left[s^{-1}B^{s,\mu}\right]+2s^{-1}\mu.
\eq
Combining (\ref{elubt}) with
\[
\mathbb{E}_{r,1}\left[r^{-1}B^{r,\mu}\right]=r^{-1},
\]
we have that, for $n\geq r$,
\ba
r^{-1}-2\mu\sum_{s=r}^{n-1}s^{-1}\leq\mathbb{E}_{r,1}\left[n^{-1}B^{n,\mu}\right]\leq r^{-1}+2\mu\sum_{s=r}^{n-1}s^{-1}.
\ea
Therefore, as $\mu\rightarrow0$ and $n\mu\rightarrow\theta$,
\ba
\mathbb{E}_{r,1}\left[n^{-1}B^{n,\mu}\right]\rightarrow r^{-1}.
\ea
In just the same manner,
\ba
\mathbb{E}_{r,1}\left[n^{-1}\hat{B}^{n,\mu}\right]\rightarrow r^{-1}.
\ea
Consider the single mutant cell present when the total number of cells is $r$. Write $D^n$ for the number of cells which have descended from this mutant cell when the total number of cells is $n\geq r$. The process $(D^n,n-D^n)_{n\geq r}$ is just Polya's urn. So
\ba
\mathbb{E}_{r,1}\left[n^{-1}D^n\right]=r^{-1}.\ea
Moreover a well-known result (e.g. \cite{durr}) says that, as $n\rightarrow\infty$, $n^{-1}D^n$ converges to a Beta random variable. That is, for $a\in(0,1)$,
\bq\label{betalim}
\mathbb{P}_{r,1}[n^{-1}D^n>a]\rightarrow(1-a)^{r-1},
\eq
which is exactly the limit we wish to show for $n^{-1}B^{n,\mu}$ and $n^{-1}\hat{B}^{n,\mu}$. To show that $n^{-1}D^n$, $n^{-1}B^{n,\mu}$, and $n^{-1}\hat{B}^{n,\mu}$ share the same limiting distribution, we will show that their differences converge to zero. The inequality
\[
D^n\leq\hat{B}^{n,\mu}
\]
gives that
\[
\mathbb{E}_{r,1}|n^{-1}\hat{B}^{n,\mu}-n^{-1}D^n|=\mathbb{E}_{r,1}n^{-1}\hat{B}^{n,\mu}-\mathbb{E}_{r,1}n^{-1}D^n\rightarrow0.
\]
The inequality
\[
B^{n,\mu}\leq\hat{B}^{n,\mu}
\]
gives that
\[
\mathbb{E}_{r,1}|n^{-1}\hat{B}^{n,\mu}-n^{-1}B^{n,\mu}|=\mathbb{E}_{r,1}n^{-1}\hat{B}^{n,\mu}-\mathbb{E}_{r,1}n^{-1}B^{n,\mu}\rightarrow0.
\]
\end{proof}
\end{lemma}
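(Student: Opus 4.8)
The plan is to reduce both statements to a classical Pólya urn computation. Conditioning on $\{\xi^\mu=r,\Xi^\mu_1\}$ places exactly one mutant cell in the population at the instant it first contains $r$ cells. Writing $D^n$ for the number of genealogical descendants (itself included) of that founding mutant once the population reaches $n$ cells, I would first observe that the dynamics of $D^n$ are decoupled from the mutation rate: in the Yule process the cell that divides next is uniform among those present, so $(D^s,s-D^s)_{s\geq r}$ is precisely a Pólya urn started from $(1,r-1)$. Consequently $n^{-1}D^n$ converges in distribution to a $\mathrm{Beta}(1,r-1)$ law, whose tail is $\mathbb{P}[\mathrm{Beta}(1,r-1)>a]=(1-a)^{r-1}$; this is exactly the claimed limit, and it also yields the exact identity $\mathbb{E}[n^{-1}D^n\mid\xi^\mu=r,\Xi^\mu_1]=r^{-1}$ from the urn martingale.

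The remaining task is to show that $n^{-1}B^{n,\mu}$ and $n^{-1}\hat{B}^{n,\mu}$ inherit the same limit, and here the strategy is \emph{convergence of means plus monotone sandwiching}. From the one-step transition probabilities (\ref{transprobs}) I would compute that $\mathbb{E}[(s+1)^{-1}B^{s+1,\mu}\mid B^{s,\mu}=k]=s^{-1}k+O(s^{-1}\mu)$, uniformly in $k\in\{0,\dots,s\}$; that is, $(s^{-1}B^{s,\mu})_s$ is a martingale up to an error of order $s^{-1}\mu$. Telescoping from $s=r$ to $n-1$ accumulates an error of order $\mu\sum_{s=r}^{n-1}s^{-1}=O(\mu\log n)$, which vanishes because $n\mu\to\theta$ forces $\mu\log n\to0$. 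Starting from the conditional value $r^{-1}$ at $s=r$, this gives $\mathbb{E}[n^{-1}B^{n,\mu}\mid\xi^\mu=r,\Xi^\mu_1]\to r^{-1}$, and the identical argument applied to (\ref{transprobsb}) gives $\mathbb{E}[n^{-1}\hat{B}^{n,\mu}\mid\xi^\mu=r,\Xi^\mu_1]\to r^{-1}$.

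To close, I would use the deterministic inequalities $D^n\leq\hat{B}^{n,\mu}$ (the founder's descendants all carry the founder as a mutated ancestor) and $B^{n,\mu}\leq\hat{B}^{n,\mu}$ (from (\ref{bbhatineq})). Since all three conditional means converge to the common value $r^{-1}$, each inequality upgrades to $L^1$ convergence of the gap, e.g. $\mathbb{E}[\,|n^{-1}\hat{B}^{n,\mu}-n^{-1}D^n|\mid\xi^\mu=r,\Xi^\mu_1]=\mathbb{E}[n^{-1}\hat{B}^{n,\mu}\mid\cdots]-\mathbb{E}[n^{-1}D^n\mid\cdots]\to0$, and likewise for $\hat{B}^{n,\mu}-B^{n,\mu}$. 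By the triangle inequality $n^{-1}B^{n,\mu}$, $n^{-1}\hat{B}^{n,\mu}$ and $n^{-1}D^n$ are all within an $L^1$ distance tending to zero, so by Slutsky they share the $\mathrm{Beta}(1,r-1)$ limit, and the two displayed tail statements follow at every continuity point $a\in(0,1)$.

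I expect the mean computation for $B^{n,\mu}$ to be the only delicate point. Unlike the urn or $\hat{B}^{n,\mu}$, the process $B^{n,\mu}$ can both increase (forward mutations among the $s-k$ unmutated cells) and decrease (reverse mutations among the $k$ mutants), so its one-step drift carries several competing $\mu$-order terms; the work is to verify that these combine to leave the leading term exactly $s^{-1}k$ with a residual that is \emph{uniformly} $O(s^{-1}\mu)$ in $k$. It is this uniform bound, rather than a mere pointwise limit, that makes the telescoping-sum argument legitimate under the $n\mu\to\theta$ scaling; once it is in hand, together with the standard Beta limit for the urn, everything else is routine.
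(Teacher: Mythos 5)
Your proposal is correct and follows essentially the same route as the paper's own proof: the Pólya urn/Beta limit for the founder's descendants $D^n$, the telescoped drift bound showing $\mathbb{E}_{r,1}[n^{-1}B^{n,\mu}]$ and $\mathbb{E}_{r,1}[n^{-1}\hat{B}^{n,\mu}]$ converge to $r^{-1}$, and the sandwich $D^n\leq\hat{B}^{n,\mu}$, $B^{n,\mu}\leq\hat{B}^{n,\mu}$ upgrading convergence of means to $L^1$ closeness and hence to a shared limiting distribution. The one point you flag as delicate --- the uniform $O(s^{-1}\mu)$ drift error for $B^{s,\mu}$ --- is exactly the computation the paper carries out explicitly from the transition probabilities, so nothing is missing.
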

\begin{lemma}\label{df}Consider $(\mu_n)_{n\in\mathbb{N}}$ with $n\mu_n\rightarrow\theta\in[0,\infty)$. Then
\ba
\sup_{n\in\mathbb{N}}\mathbb{P}[n^{-1}B^{n,\mu_n}>a\big|\xi^{\mu_n}=r,\Xi^{\mu_n}_j]&\leq&\sup_{n\in\mathbb{N}}\mathbb{P}[n^{-1}\hat{B}^{n,\mu_n}>a\big|\xi^{\mu_n}=r,\Xi^{\mu_n}_j]\\
&\leq& cr^{-2},
\ea
where $c>0$ does not depend on $r,j$.
\end{lemma}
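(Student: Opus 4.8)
The first inequality requires no work: by (\ref{bbhatineq}) we have $B^{n,\mu}\le\hat B^{n,\mu}$ pointwise, so $\{n^{-1}B^{n,\mu_n}>a\}\subseteq\{n^{-1}\hat B^{n,\mu_n}>a\}$, and this inclusion survives the conditioning and the supremum over $n$. For the quantitative bound the plan is to trade the tail for a second moment and apply Markov's inequality,
\[
\mathbb{P}_{r,j}[n^{-1}\hat B^{n,\mu}>a]\le a^{-2}n^{-2}\,\mathbb{E}_{r,j}\big[(\hat B^{n,\mu})^2\big],
\]
where $\mathbb{E}_{r,j}$ is the conditional expectation of Lemma \ref{ck1l} (extended from $j=1$ to $j\in\{1,2\}$). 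Under this conditioning $\hat B^{s,\mu}=0$ for $s<r$ and $\hat B^{r,\mu}=j$, so the claim reduces to
\[
\sup_{n\ge r}n^{-2}\mathbb{E}_{r,j}\big[(\hat B^{n,\mu})^2\big]\le C r^{-2}
\]
for a constant $C$ independent of $n,r,j$; for the finitely many $r$ so small that $a^{-2}C\ge r^2$ the desired bound $cr^{-2}$ is automatic since probabilities never exceed $1$.

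The next step is to set up moment recursions from the transition probabilities (\ref{transprobsb}). Writing $m_s=\mathbb{E}_{r,j}\hat B^{s,\mu}$ and $M_s=\mathbb{E}_{r,j}(\hat B^{s,\mu})^2$, a direct computation (with $p_1=\tfrac{j'}{s}+\tfrac{s-j'}{s}2\mu(1-\mu)$ and $p_2=\tfrac{s-j'}{s}\mu^2$) gives
\[
m_{s+1}=m_s\Big(1+\tfrac1s\Big)+2\mu\Big(1-\tfrac{m_s}{s}\Big),\qquad M_{s+1}=M_s\Big(1+\tfrac2s\Big)+\tfrac{m_s}{s}+R_s,
\]
where $R_s$ collects the remaining $\mu$-terms and obeys $R_s\le C\mu(m_s+1)$. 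I would solve both by their summation (variation-of-constants) factors $\phi_s=\prod_{k=r}^{s-1}(1+\tfrac1k)=s/r$ and $\psi_s=\prod_{k=r}^{s-1}(1+\tfrac2k)=\tfrac{s(s+1)}{r(r+1)}=\Theta\big((s/r)^2\big)$. Abel summation against $\phi_s$ yields $m_s\le js/r+2\mu s\log(s/r)$, so in particular $m_s/s$ stays bounded, and writing $M_n=\psi_nW_n$ turns the second recursion into $W_n=j^2+\sum_{s=r}^{n-1}\psi_{s+1}^{-1}\big(m_s/s+R_s\big)$.

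It then remains to bound $W_n$ uniformly in $n$. Using $\psi_{s+1}^{-1}=\Theta\big((r/s)^2\big)$ and $m_s/s\approx j/r$, the first sum is $\Theta\big(jr\sum_{s\ge r}s^{-2}\big)=O(j)$ — exactly the telescoping one sees for the pure P\'olya urn, whose limiting Beta$(j,r-j)$ law has second moment $j(j+1)/\big(r(r+1)\big)$. The sum involving $R_s$ is where the only genuine care is needed, and this is the step I expect to be the main obstacle: naively the $\mu$-terms accumulate as $s$ runs up to $n$, but in the regime $n\mu\to\theta$ one has $s\mu\le n\mu\le C\theta$ for all $s\le n$, and the weight $\psi_{s+1}^{-1}=\Theta\big((r/s)^2\big)$ makes every such sum converge; concretely $\sum_s\psi_{s+1}^{-1}R_s=O(\theta j)$, using $\mu r\log(n/r)\le\theta\sup_{x\in(0,1]}x\log(1/x)$ together with $\sum_{s\ge r}s^{-2}\log(s/r)=O(1/r)$. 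Hence $W_n\le C(j^2+j+\theta j)=O(1)$ for $j\le2$, so $n^{-2}M_n=n^{-2}\psi_nW_n=O(r^{-2})$ uniformly in $n$, and Markov's inequality delivers $cr^{-2}$ with $c=a^{-2}C$. The mechanism that defeats the obstacle is precisely the coupling $n\mu\to\theta$, which keeps $s\mu$ bounded throughout the relevant range $r\le s\le n$.
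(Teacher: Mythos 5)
Your proof is correct, and it reaches the bound by a genuinely different moment computation than the paper. The paper also works under $\mathbb{E}_{r,j}$ with the transition probabilities (\ref{transprobsb}) and finishes with Markov's inequality, but its second-moment estimate is deliberately crude: bounding the cross term $k/s$ by $1$ (rather than by its conditional mean) yields only $\mathbb{E}_{r,j}\bigl[s^{-2}(\hat{B}^{s,\mu_n})^2\bigr]\leq c_2r^{-1}$, which is too weak for the claim. The paper then bootstraps: it computes the \emph{third} moment recursion, feeds the $O(r^{-1})$ second-moment bound into the term $c_3s^{-4}k^2$, telescopes to get $\mathbb{E}_{r,j}\bigl[n^{-3}(\hat{B}^{n,\mu_n})^3\bigr]\leq cr^{-2}$, and applies Markov with the third power (so its constant is proportional to $a^{-3}$). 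You instead stay at the second moment and recover the full $r^{-2}$ by solving the recursions exactly with the summation factors $\phi_s=s/r$ and $\psi_s=s(s+1)/(r(r+1))$, using the first-moment bound $m_s/s\leq j/r+2\mu\log(s/r)$ to control the cross term $m_s/s$ rather than discarding it; the weights $\psi_{s+1}^{-1}=\Theta((r/s)^2)$ then make every error sum $O(1)$ in the regime $n\mu\to\theta$, giving $n^{-2}\mathbb{E}_{r,j}\bigl[(\hat{B}^{n,\mu_n})^2\bigr]=O(r^{-2})$ and a constant proportional to $a^{-2}$. What each approach buys: yours is sharper per moment and avoids the third-moment computation entirely, at the price of tracking the first moment carefully inside the second-moment recursion (including the $\mu\log(s/r)$ correction, which you correctly tame via $\mu r\log(n/r)\leq Cn\mu$); the paper's is blunter term-by-term — every cross term is bounded by constants times $s^{-2}$ — so each telescoping step is mechanical, at the price of one extra moment. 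Both deliver exactly what Theorem \ref{sslmf}'s proof needs, namely a bound summable in $r$ for dominated convergence. One cosmetic remark: your closing observation that small $r$ can be handled because probabilities are at most $1$ is unnecessary, since the Markov bound $a^{-2}Cr^{-2}$ is valid for all $r$ regardless of whether it exceeds $1$.
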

\begin{proof}
The first inequality is immediate. We prove the second. From the transition probabiities (\ref{transprobsb}),
\ba
\mathbb{E}\left[(\hat{B}^{s+1,\mu_n})^2|\hat{B}^{s,\mu_n}=k\right]&=&k^2\left(1+2s^{-1}-4\mu_n s^{-1}\right)\\
&&+k\left(4\mu_n+s^{-1}(1-2\mu_n-2\mu_n^2)\right)\\
&&+2\mu_n+2\mu_n^2\\
&\leq&k^2(s+1)^2s^{-2}+4n\mu_n+2\mu_n+2\mu_n^2+1.
\ea
So, for $s\leq n$,
\bq\label{expineq}
\mathbb{E}\left[(s+1)^{-2}(\hat{B}^{s+1,\mu_n})^2|\hat{B}^{s,\mu_n}=k\right]\leq s^{-2}k^2+s^{-2}c_1,
\eq
where $c_1>0$ is a constant which does not depend on $k,n,s$. Now let's condition on $\{\xi^{\mu_n}=r,\Xi^{\mu_n}_j\}$, again writing $\mathbb{E}_{r,j}$ for the conditional expectation. From (\ref{expineq}), for $s\in\{r,..,n\}$,
\[
\mathbb{E}_{r,j}[(s+1)^{-2}(\hat{B}^{s+1,\mu_n})^2]-\mathbb{E}_{r,j}[s^{-2}(\hat{B}^{s,\mu_n})^2]\leq s^{-2}c_1.
\]
This leads to, for $s\in\{r,..,n\}$,
\bq\label{secondmbound}
\mathbb{E}_{r,j}[s^{-2}(\hat{B}^{s,\mu_n})^2]&\leq&\mathbb{E}_{r,j}[r^{-2}(\hat{B}^{r,\mu_n})^2]+c_1\sum_{t=r}^{s-1}t^{-2}\nonumber\\
&=&r^{-2}j^2+c_1\sum_{t=r}^{s-1}s^{-2}\nonumber\\
&\leq&c_2r^{-1},
\eq
where $c_2$ is a constant which does not depend on $j,n,r,s$. (In the following, $c_3,..,c_6$ will also be constants.) Calculating third moments from the transition probabilities,
\ba
\mathbb{E}\left[(\hat{B}^{s+1,\mu_n})^3|\hat{B}^{s,\mu_n}=k\right]&=&k^3\left(1+3s^{-1}(1-2\mu_n-4\mu_n^2)\right)\\
&&+k^2\left(6\mu_n+3s^{-1}(1-2\mu_n-6\mu_n^2)\right)\\
&&+k\left(6\mu_n+6\mu_n^2-s^{-1}(1+2\mu_n+6\mu_n^2)\right)\\
&&+2\mu_n+6\mu_n^2\\
&\leq&k^3(s+1)^{3}s^{-3}+c_3k^2s^{-1}+c_4kn^{-1}.
\ea
Then
\ba
\mathbb{E}\left[(s+1)^{-3}(\hat{B}^{s+1,\mu_n})^3|\hat{B}^{s,\mu_n}=k\right]&\leq&s^{-3}k^3+c_3s^{-4}k^2+c_4s^{-2}n^{-1}.
\ea
Hence
\ba
&&\mathbb{E}_{r,j}\left[(s+1)^{-3}(\hat{B}^{s+1,\mu_n})^3\right]-\mathbb{E}_{r,j}\left[s^{-3}(\hat{B}^{s,\mu_n})^3\right]\\&&\quad\leq c_3s^{-2}\mathbb{E}_{r,j}[s^{-2}(\hat{B}^{s,\mu_n})^2]+c_4s^{-2}r^{-1},
\ea
which combined with (\ref{secondmbound}) gives that
\ba
\mathbb{E}_{r,j}[n^{-3}(\hat{B}^{n,\mu_n})^3]&=&\mathbb{E}_{r,j}[r^{-3}(\hat{B}^{r,\mu_n})^3]+c_5r^{-1}\sum_{s=r}^{n-1} s^{-2}\\
&\leq&r^{-3}+c_6r^{-2}\\
&\leq&cr^{-2}.
\ea
Apply Markov's inequality to conclude.
\end{proof}

\begin{proof}[Proof of Theorem \ref{sslmf} and Proposition \ref{ssmfps}]
\bq
&&\mu^{-1}\mathbb{P}[n^{-1}B^{n,\mu}>a]\nonumber\\
&&=\sum_{r=2}^\infty\sum_{j=1}^2\mu^{-1}\mathbb{P}[\xi^\mu=r,\Xi^\mu_j]\mathbb{P}[n^{-1}B^{n,\mu}>a\big|\xi^\mu=r,\Xi^\mu_j].\label{itwwc}
\eq
Lemmas \ref{fmdl}, \ref{ck1l}, and \ref{df}, along with the Dominated Convergence Theorem, show that the limit of (\ref{itwwc}) is
\ba2\sum_{r=1}^\infty(1-a)^{r}=2(a^{-1}-1).
\ea
The same argument works for $\hat{B}^{n,\mu}$.
\end{proof}

\subsection{Cell descendant fractions}\label{sec:dcfs}
Here we are concerned with the $P_{x,t}$ (the fraction of cells alive at time $t\geq0$ which are descendants of cell $x\in\mathcal{T}$).

Aldous \cite{apdoc}, in a different language to ours, gives a similar result to Lemma \ref{btst}. Rather than adapting his result, we now give a distinct proof of Lemma \ref{btst}.
\begin{proof}[Proof of Lemma \ref{btst}]
Write
\[
\mathcal{D}_x=\{y\in\mathcal{T}:x\preceq y\}
\]
for the descendants of cell $x\in\mathcal{T}$, and write
\[
\mathcal{D}_{x,t}=\mathcal{D}_x\cap\mathcal{T}_t
\]
for the descendants of cell $x\in\mathcal{T}$ which are alive at time $t\geq0$. Observe that
\ba
\mathcal{D}_{x,\sum_{y\prec x}A_y+t}=\left\{y\in\mathcal{T}:x\preceq y, \sum_{x\preceq z\prec y}A_z\leq t<\sum_{x\preceq z\preceq y}A_z\right\}.
\ea
Hence
\[
\left(|\mathcal{D}_{x,\sum_{y\prec x}A_y+t}|\right)_{t\geq0}
\]
is measurable with respect to the sigma-algebra generated by $(A_y)_{y\in\mathcal{D}_x}$, and has the same distribution as
\[
\left(|\mathcal{D}_{\emptyset,t}|\right)_{t\geq0}=\left(|\mathcal{T}_t|\right)_{t\geq0}.
\]
It follows that
\[
\lim_{t\rightarrow\infty}e^{- t}|\mathcal{D}_{x,\sum_{y\prec x}A_y+t}|=:W_x
\]
almost surely, where $W_x\sim$Exp$(1)$; moreover if $x,y\in\mathcal{T}$ are such that $\mathcal{D}_x\cap\mathcal{D}_y=\emptyset$, then $W_x$ and $W_y$ are independent. In particular, $W_{x0}$ and $W_{x1}$ are independent. Now,
\[
\lim_{t\rightarrow\infty}\frac{|\mathcal{D}_{x0,t}|}{|\mathcal{D}_{x,t}|}=\lim_{t\rightarrow\infty}\frac{|\mathcal{D}_{x0,t}|}{1+|\mathcal{D}_{x0,t}|+|\mathcal{D}_{x1,t}|}=\frac{W_{x0}}{W_{x0}+W_{x1}}=:U_{x0}
\]
almost surely, and $U_{x0}+U_{x1}=1$. A standard calculation shows that $U_{x0}$ is uniformly distributed on $(0,1)$: for $u\in(0,1)$,
\[
\mathbb{P}[U_{x0}<u]=\int_0^\infty\int^\infty_{z(1-u)/u} e^{-y}e^{-z}dydz=u.
\]
It remains to show independence of the $U_{x0}$. Another standard calculation shows that
\[
U_{x0}=\frac{W_{x0}}{W_{x0}+W_{x1}}
\]
is independent of
\[
W_{x0}+W_{x1}:
\]
for $(u,v)\in(0,1)\times(0,\infty)$,
\ba
\mathbb{P}[U_{x0}<u,W_{x0}+W_{x1}<v]&=&\int_0^{uv}\int_{z(1-u)/u}^{v-z}e^{-y}e^{-z}dydz\\
&=&u(1-(1+v)e^{-v})\\
&=&\mathbb{P}[U_{x0}<u]\mathbb{P}[W_{x0}+W_{x1}<v].
\ea

Now fix $l\in\mathbb{N}$. Because $U_{x0}$ and $W_{x0}+W_{x1}$ are measurable with respect to the sigma-algebra generated by $(A_y)_{y\in\mathcal{D}_x\backslash\{x\}}$, we have that
\bq\label{cirv}
\left[(U_{x0})_{|x|=l},(W_{x0}+W_{x1})_{|x|=l},(A_x)_{|x|\leq l}\right]
\eq
forms an independent family of random variables.

Finally we complete the proof by induction. Suppose that $(U_{x0})_{x\in\mathcal{T}:|x|< l}$ is an independent family. Observing that for any $x\in\mathcal{T}$,
\[
W_x=e^{-A_x}(W_{x0}+W_{x1}),
\]
we have that $(U_{x0})_{x\in\mathcal{T}:|x|< l}$ is measurable with respect to the sigma-algebra generated by
\[
[(W_{x0}+W_{x1})_{|x|=l},(A_x)_{|x|\leq l}].
\]
Then, thanks to the independence of (\ref{cirv}), $(U_{x0})_{x\in\mathcal{T}:|x|\leq l}$ forms an independent family.\end{proof}
Next comes a technical result whose value will become apparent in the next subsection.
\begin{lemma}\label{finitepset}
Let $\epsilon\in(0,1)$. The set
\[
\{x\in\mathcal{T}\backslash\{\emptyset\}:\exists t\geq0,P_{x,t}>\epsilon\}
\]
is almost surely finite.
\begin{proof}
For $t\geq0$, let $\mathcal{F}_t$ be the sigma-algebra generated by $\left(\mathcal{T}_s\right)_{s\in[0,t]}$. For $t\geq s\geq0$, conditional on $\mathcal{F}_s$, $\left(P_{y,t}\right)_{y\in\mathcal{T}_s}$ is exchangable. So for $y\in\mathcal{T}$, $$\mathbb{E}[1_{\{y\in\mathcal{T}_s\}}P_{y,t}|\mathcal{F}_s]=\frac{1_{\{y\in\mathcal{T}_s\}}}{|\mathcal{T}_s|}.$$
Now let $x\in\mathcal{T}\backslash\{\emptyset\}$. We have
$$
P_{x,t}\geq1_{\{|\mathcal{D}_{x,s}|>0\}}P_{x,t}=\sum_{y\in\mathcal{D}_{x,s}}P_{y,t}$$
and hence
$$
\mathbb{E}[P_{x,t}|\mathcal{F}_s]\geq P_{x,s}.
$$
That is, $(P_{x,t})_{t\geq0}$ is a submartingale with respect to $(\mathcal{F}_t)_{t\geq0}$.
Then by Doob's inequality,
\[
\mathbb{P}[\exists t\geq0,P_{x,t}>\epsilon]=\mathbb{P}[\sup_{t\geq0}P_{x,t}>\epsilon]\leq \epsilon^{-2}\mathbb{E}[(P_x)^2].
\]
But $P_x$ is simply a product of $|x|$ independent Uniform$(0,1)$ random variables, where $|x|\in\mathbb{N}$ is the generation of $x$ (that is, $x\in\{0,1\}^{|x|}$). So $\mathbb{E}[(P_x)^2]=3^{-|x|}$. Hence
\[
\mathbb{P}[\exists t\geq0,P_{x,t}>\epsilon]=\mathbb{P}[\sup_{t\geq0}P_{x,t}>\epsilon]\leq \epsilon^{-2}3^{-|x|}.
\]
Now
\[
\sum_{x\in\mathcal{T}\backslash\{\emptyset\}}\mathbb{P}[\exists t\geq0,P_{x,t}>\epsilon]\leq\sum_{l\in\mathbb{N}}\epsilon^{-2}(2/3)^{l}<\infty,
\]
so the Borel-Cantelli lemma concludes the proof.
\end{proof}
\end{lemma}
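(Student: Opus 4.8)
The plan is to control the events cell-by-cell and then invoke the first Borel--Cantelli lemma. For each $x\in\mathcal{T}\backslash\{\emptyset\}$ set $A_x=\{\exists t\geq0:P_{x,t}>\epsilon\}$. If I can establish that $\sum_{x}\mathbb{P}[A_x]<\infty$, then almost surely only finitely many of the $A_x$ occur, which is precisely the assertion that the set in question is finite. So the whole problem reduces to a good tail bound on $\mathbb{P}[A_x]=\mathbb{P}[\sup_{t\geq0}P_{x,t}>\epsilon]$ together with a summability check over the tree.

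To handle a supremum over time, my first move would be to recognise $(P_{x,t})_{t\geq0}$ as a submartingale for the filtration $(\mathcal{F}_t)$ generated by $(\mathcal{T}_s)_{s\leq t}$. The mechanism is exchangeability: conditional on $\mathcal{F}_s$, the descendant fractions $(P_{y,t})_{y\in\mathcal{T}_s}$ of the cells alive at time $s$ are exchangeable, so each has conditional mean $1/|\mathcal{T}_s|$. Since every descendant of $x$ alive at a later time $t\geq s$ descends from exactly one cell of $\mathcal{D}_{x,s}$, one has $P_{x,t}\geq\sum_{y\in\mathcal{D}_{x,s}}P_{y,t}$, where the indicator $1_{\{|\mathcal{D}_{x,s}|>0\}}$ absorbs the case that $x$ is not yet born; taking conditional expectations then gives $\mathbb{E}[P_{x,t}\mid\mathcal{F}_s]\geq|\mathcal{D}_{x,s}|/|\mathcal{T}_s|=P_{x,s}$. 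With this in hand, Doob's maximal inequality applied to the nonnegative submartingale $(P_{x,t}^2)$ yields $\mathbb{P}[\sup_t P_{x,t}>\epsilon]\leq\epsilon^{-2}\sup_t\mathbb{E}[P_{x,t}^2]$.

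The missing quantitative input is the second moment, and here I would lean on Lemma \ref{btst}: $P_{x,t}\to P_x=\prod_{\emptyset\prec y\preceq x}U_y$ almost surely, a product of $|x|$ independent Uniform$(0,1)$ factors, so that $\mathbb{E}[P_x^2]=(1/3)^{|x|}$. Because $P_{x,t}\in[0,1]$ is bounded, bounded convergence, combined with the monotonicity of $t\mapsto\mathbb{E}[P_{x,t}^2]$ coming from the submartingale property, identifies $\sup_t\mathbb{E}[P_{x,t}^2]$ with $\mathbb{E}[P_x^2]=3^{-|x|}$, giving $\mathbb{P}[A_x]\leq\epsilon^{-2}3^{-|x|}$.

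Finally I would sum over the tree: generation $l$ contains $2^l$ cells, each contributing at most $\epsilon^{-2}3^{-l}$, so $\sum_{x}\mathbb{P}[A_x]\leq\epsilon^{-2}\sum_{l\geq1}(2/3)^l<\infty$, and Borel--Cantelli closes the argument. I expect the only genuinely delicate point to be the submartingale step — justifying the conditional mean $1/|\mathcal{T}_s|$ through exchangeability and correctly accounting for cells not yet alive — since everything afterwards is a bounded-moment computation feeding a standard maximal-inequality-plus-Borel--Cantelli scheme.
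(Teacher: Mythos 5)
Your proposal is correct and follows essentially the same route as the paper's proof: the submartingale property of $(P_{x,t})_{t\geq0}$ established via exchangeability, Doob's maximal inequality with the second moment $\mathbb{E}[(P_x)^2]=3^{-|x|}$, a geometric sum over generations, and Borel--Cantelli. If anything, you are slightly more careful than the paper in justifying the passage from finite-horizon Doob bounds to the bound involving the limit $P_x$ (via monotonicity of $t\mapsto\mathbb{E}[P_{x,t}^2]$ and bounded convergence), a step the paper leaves implicit.
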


\subsection{Mutation frequencies under the infinite sites assumption}\label{sec:mfisa}
Enumerate the elements of $\mathcal{T}$,
\[
\mathcal{T}=(x_k)_{k\in\mathbb{N}},
\]
in such a way that 
\bq\label{propertyofenumeration}x_j\prec x_k\implies j<k.
\eq
Let's give an example of such an enumeration: map $(x(r))_{r=1}^l\in\{0,1\}^l\subset\mathcal{T}$ to $2^l+\sum_{r=1}^l2^{l-r}x(r)$.

Assuming a mutation rate $\mu$, write
\[
\phi_i^\mu=\min\{x\in\mathcal{T}:V^\mu_i(x)\not=u_i\}
\]
for the first cell (with respect to the enumeration) which sees a mutation at site $i\in\mathcal{S}$.
\begin{remark}\label{phiremark}$\phi_i^\mu$ has geometric distribution:
\[\mathbb{P}[\phi_i^\mu=x_k]=\mu(1-\mu)^{k-1}.
\]
\end{remark}
In this subsection we are concerned with $P_{\phi_i^\mu,\sigma_n}$, which is the fraction of cells alive at time $\sigma_n$ (when $n$ total cells are reached) which are descendants of cell $\phi_i^\mu$. Phrased another way, $P_{\phi_i^\mu,\sigma_n}$ is the fraction of cells alive at time $\sigma_n$ which are mutated at site $i$ \textit{under the infinite sites assumption}.

Next we give an infinite-sites analog of Theorem \ref{dslfm}.
\begin{prop}\label{isvmt}As $n\rightarrow\infty$, $\mu\rightarrow0$, and $|\mathcal{S}|\mu\rightarrow\eta\in[0,\infty)$,
\[
\sum_{i\in\mathcal{S}}\delta_{P_{\phi_i^n,\sigma_n}}\rightarrow\sum_{x\in\mathcal{T}\backslash\{\emptyset\}}M_x\delta_{P_x}
\]
in distribution, with respect to the vague topology on the space of measures on $(0,1]$.
\end{prop}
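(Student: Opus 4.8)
The plan is to establish vague convergence in distribution by computing Laplace functionals against test functions $f\in C_c^+((0,1])$, a class which is convergence-determining for the vague topology on locally finite measures on $(0,1]$. Write $\nu_n=\sum_{i\in\mathcal S}\delta_{P_{\phi_i^\mu,\sigma_n}}$ and $\nu=\sum_{x\in\mathcal T\backslash\{\emptyset\}}M_x\delta_{P_x}$. The structural fact that drives everything is that the marks $(\phi_i^\mu)_{i\in\mathcal S}$ are i.i.d.\ across sites (mutations at distinct sites are independent) and are moreover independent of the lifetimes $(A_x)$ that determine the descendant fractions $P_{x,t}$ and the time $\sigma_n$: each $\phi_i^\mu$ is a deterministic function of the site-$i$ mutation indicators alone. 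Let $\mathcal F$ be the $\sigma$-algebra generated by the lifetimes. Conditionally on $\mathcal F$ the quantities $P_{x,\sigma_n}$ become constants, while the $\phi_i^\mu$ remain i.i.d.\ with $\mathbb P[\phi_i^\mu=x_k]=\mu(1-\mu)^{k-1}$ by Remark \ref{phiremark}.

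First I would compute the conditional Laplace functional. Writing $p_x^\mu=\mathbb P[\phi_i^\mu=x]$ and using $\sum_x p_x^\mu=1$, independence across $i$ gives, for $f\in C_c^+((0,1])$,
\[
\mathbb E\!\left[e^{-\int f\,d\nu_n}\,\middle|\,\mathcal F\right]=\left(1-\sum_{x\in\mathcal T\backslash\{\emptyset\}}p_x^\mu\bigl(1-e^{-f(P_{x,\sigma_n})}\bigr)\right)^{|\mathcal S|}.
\]
Since $\mathrm{supp}(f)\subseteq[b,1]$ for some $b>0$, the summand vanishes unless $P_{x,\sigma_n}\geq b$, hence unless $\sup_t P_{x,t}>b/2$. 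By Lemma \ref{finitepset} there are almost surely only finitely many such cells, and this set is $\mathcal F$-measurable and independent of $n$; thus the sum is effectively finite. This truncation to a finite (but random) index set is the key reduction, and is exactly where Lemma \ref{finitepset} is indispensable.

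Next I would pass to the limit inside the conditional expectation. On the finite cutoff set, Lemma \ref{btst} gives $P_{x,\sigma_n}\to P_x$ almost surely, while $|\mathcal S|\,p_{x_k}^\mu=|\mathcal S|\mu(1-\mu)^{k-1}\to\eta$ for each fixed cell, with the uniform bound $|\mathcal S|p_x^\mu\leq|\mathcal S|\mu$. Because the sum has finitely many terms given $\mathcal F$ and $p_x^\mu\to0$, the estimate $(1-a_n)^{|\mathcal S|}=\exp(|\mathcal S|\log(1-a_n))$ with $|\mathcal S|a_n$ convergent yields
\[
\mathbb E\!\left[e^{-\int f\,d\nu_n}\,\middle|\,\mathcal F\right]\longrightarrow\exp\!\left(-\eta\sum_{x\in\mathcal T\backslash\{\emptyset\}}\bigl(1-e^{-f(P_x)}\bigr)\right)
\]
almost surely. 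The right-hand side is precisely $\mathbb E[e^{-\int f\,d\nu}\mid\mathcal F]$: conditionally on $(P_x)$ the measure $\nu$ superposes independent Poisson($\eta$) atoms, so its conditional Laplace functional is $\prod_x\exp(-\eta(1-e^{-f(P_x)}))$, and local finiteness of $\nu$ (needed for the functional to determine it) again follows from Lemma \ref{finitepset}.

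Finally I would remove the conditioning: the conditional Laplace functionals are bounded by $1$, so dominated convergence gives $\mathbb E[e^{-\int f\,d\nu_n}]\to\mathbb E[e^{-\int f\,d\nu}]$ for every $f\in C_c^+((0,1])$, which is the asserted vague convergence in distribution. I expect the main obstacle to be the interchange of limits on the random finite index set in the third step: one must verify that the cutoff set does not depend on $n$ and is almost surely finite (Lemma \ref{finitepset}), that $|\mathcal S|p_x^\mu$ stays uniformly bounded so the tail of the sum is genuinely negligible, and that $P_{x,\sigma_n}\to P_x$ on that set (Lemma \ref{btst}); only then does the deterministic conditional computation go through cleanly before $\mathcal F$ is integrated out.
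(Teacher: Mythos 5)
Your proposal is correct, and it takes a genuinely different route from the paper's. The paper first proves a separate lemma (Lemma \ref{cotnmn}) showing that the per-cell counts $M_x^{\mu,\mathcal{S}}=|\{i:\phi_i^\mu=x\}|$ converge to i.i.d.\ Poisson$(\eta)$ variables (by induction on the enumeration, using conditional binomials), then invokes Skorokhod's representation theorem to realise that convergence almost surely, regroups the site sum as $\sum_{x}M_x^{\mu,\mathcal{S}}\delta_{P_{x,\sigma_n}}$, and finally checks joint convergence of the masses of finitely many closed intervals pointwise, using Lemma \ref{btst} for $P_{x,\sigma_n}\to P_x$ and Lemma \ref{finitepset} for finiteness of the contributing cells. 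You instead condition on the tree $\sigma$-algebra, compute the conditional Laplace functional exactly as $\bigl(1-\sum_x p_x^\mu(1-e^{-f(P_{x,\sigma_n})})\bigr)^{|\mathcal{S}|}$, and let the binomial-to-Poisson limit emerge analytically from $(1-a_n)^{|\mathcal{S}|}\to\exp(-\lim|\mathcal{S}|a_n)$; this bypasses both Lemma \ref{cotnmn} and the Skorokhod coupling entirely, while relying on the same two probabilistic inputs (Lemmas \ref{btst} and \ref{finitepset}) and on the same structural observation that the $\phi_i^\mu$ are i.i.d.\ across sites and independent of the tree. What the paper's route buys is an explicit almost-sure coupling of the regrouped point process, which directly matches its stated interval-by-interval definition of vague convergence and which it reuses conceptually elsewhere (e.g.\ in the Appendix heuristics); what your route buys is economy and a cleaner handling of the mark--position independence, which is encoded automatically in the product form of the limiting Laplace functional rather than imposed by construction. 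Two small points you should make explicit to be fully rigorous: cite the standard fact (e.g.\ Kallenberg) that convergence of Laplace functionals over $C_c^+((0,1])$ is equivalent to vague convergence in distribution of random measures on the locally compact space $(0,1]$, and note that since the limit measure almost surely charges no fixed point of $(0,1]$, this also yields the paper's formulation in terms of joint convergence on finite collections of closed intervals.
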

The proof of Proposition \ref{isvmt} will require us to count the number of sites which see their first mutation at cell $x\in\mathcal{T}\backslash\{\emptyset\}$ (with respect to the enumeration); write
\[
M^{\mu,\mathcal{S}}_x=|\{i\in\mathcal{S}:\phi_i^\mu=x\}|.
\] 
\begin{lemma}\label{cotnmn}As $\mu\rightarrow0$ and $|\mathcal{S}|\mu\rightarrow\eta\in[0,\infty)$,
\[
(M^{\mu,\mathcal{S}}_x)_{x\in\mathcal{T}\backslash\{\emptyset\}}\rightarrow(M_x)_{x\in\mathcal{T}\backslash\{\emptyset\}}
\]
in distribution, where the $M_x$ are i.i.d. Poisson($\eta$) random variables.
\begin{proof}
The initial cell is $x_1=\emptyset$. The number of sites which mutate in cell $x_2$, $M_{x_2}^{\mu,\mathcal{S}}$, is binomially distributed with parameters $\mathcal{S}$ and $\mu$. This converges to a Poisson($\eta$) random variable. Now, for induction, suppose that 
\[
\lim_{n\rightarrow\infty}\left(M_{x_j}^{\mu,\mathcal{S}}\right)_{j=2}^k=\left(M_{x_j}\right)_{j=2}^k
\]
in distribution, where the $M_x$ are i.i.d. Poisson($\eta$) random variables. Then
\bq\label{condinden}
\mathbb{P}\left[\left(M_{x_j}^{\mu,\mathcal{S}}\right)_{j=2}^{k+1}=(m_j)_{j=2}^{k+1}\right]&=&\mathbb{P}\left[M_{x_{k+1}}^{\mu,\mathcal{S}}=m_{k+1}\bigg|\left(M_{x_j}^{\mu,\mathcal{S}}\right)_{j=2}^{k}=(m_j)_{j=2}^{k}\right]\nonumber\\
&&\times\mathbb{P}\left[\left(M_{x_j}^{\mu,\mathcal{S}}\right)_{j=2}^{k}=(m_j)_{j=2}^{k}\right].
\eq
Due to the property (\ref{propertyofenumeration}) of the enumeration, $M_{x_{k+1}}^{\mu,\mathcal{S}}$ conditioned on the event $\left(M_{x_j}^{\mu,\mathcal{S}}\right)_{j=2}^{k}=(m_j)_{j=2}^{k}$ is just a binomial random variable with parameters $|\mathcal{S}|-\sum_{j=2}^km_j$ and $\mu$. Therefore (\ref{condinden}) converges as required.
\end{proof}
\end{lemma}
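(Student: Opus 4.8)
The plan is to recognise $(M_x^{\mu,\mathcal{S}})_{x\in\mathcal{T}\backslash\{\emptyset\}}$ as the occupancy vector of a balls-in-boxes experiment and then apply the law of small numbers. Since distinct sites evolve independently in the model, the first-mutation cells $(\phi_i^\mu)_{i\in\mathcal{S}}$ are i.i.d., each with the geometric law $\mathbb{P}[\phi_i^\mu=x_k]=\mu(1-\mu)^{k-1}$ of Remark \ref{phiremark}. Writing $p_k^\mu=\mu(1-\mu)^{k-1}$ and $M_x^{\mu,\mathcal{S}}=\sum_{i\in\mathcal{S}}1_{\{\phi_i^\mu=x\}}$, the vector $(M_x^{\mu,\mathcal{S}})$ is exactly the vector of box-counts produced by throwing $|\mathcal{S}|$ independent balls into the boxes indexed by $\mathcal{T}\backslash\{\emptyset\}$, box $x_k$ receiving each ball with probability $p_k^\mu$. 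The only relevant feature of $p_k^\mu$ is that $|\mathcal{S}|p_k^\mu=|\mathcal{S}|\mu(1-\mu)^{k-1}\to\eta$ for each fixed $k$, since $(1-\mu)^{k-1}\to1$.

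I would then reduce the statement to finite-dimensional convergence: on $\mathbb{R}^{\mathcal{T}\backslash\{\emptyset\}}$ with the product topology, a sequence converges in distribution to a product law precisely when every finite sub-collection of coordinates converges jointly (marginal tightness on $\mathbb{R}^\infty$ being automatic). Since the root $x_1=\emptyset$ is never mutated and contributes no ball, it suffices to show that for every $m$ the block $(M_{x_2}^{\mu,\mathcal{S}},\dots,M_{x_m}^{\mu,\mathcal{S}})$ converges in distribution to a vector of independent Poisson($\eta$) random variables.

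For this I would induct on $m$, using the enumeration property (\ref{propertyofenumeration}) that ancestors precede descendants. The base case is immediate, as $M_{x_2}^{\mu,\mathcal{S}}$ is Binomial$(|\mathcal{S}|,p_2^\mu)$ and $|\mathcal{S}|p_2^\mu\to\eta$. For the inductive step I condition on $(M_{x_j}^{\mu,\mathcal{S}})_{j=2}^{m}=(m_j)_{j=2}^m$. Any site that has not yet placed its ball among $x_2,\dots,x_m$ has all of $x_1,\dots,x_m$ unmutated, so the parent of $x_{m+1}$ (which precedes $x_{m+1}$) is unmutated at that site, and hence the site lands in $x_{m+1}$ with conditional probability exactly $\mu$, independently across the $|\mathcal{S}|-\sum_{j=2}^m m_j$ remaining sites; the conditional law of $M_{x_{m+1}}^{\mu,\mathcal{S}}$ is therefore Binomial$(|\mathcal{S}|-\sum_{j=2}^m m_j,\mu)$. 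Because the inductive hypothesis keeps $\sum_{j=2}^m m_j$ tight, $(|\mathcal{S}|-\sum_{j=2}^m m_j)\mu\to\eta$ for every fixed conditioning value, so this conditional Binomial converges to Poisson($\eta$) with a limit not depending on the conditioning. Multiplying by the inductive hypothesis then yields joint convergence of $(M_{x_2}^{\mu,\mathcal{S}},\dots,M_{x_{m+1}}^{\mu,\mathcal{S}})$ to independent Poisson($\eta$) laws.

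The crux, and the only genuinely delicate point, is this asymptotic independence. For fixed $\mu$ and $|\mathcal{S}|$ the counts are dependent through the constraint $\sum_x M_x^{\mu,\mathcal{S}}=|\mathcal{S}|$, and the lemma asserts that the dependence evaporates in the limit. The induction dispatches it by observing that conditioning on earlier boxes only subtracts a tight number of already-placed balls, negligible against $|\mathcal{S}|\to\infty$; it is the ancestor-before-descendant ordering that makes the conditional success probability coincide with the unconditional $\mu$ (the memorylessness of the geometric law). A clean alternative I would keep in reserve is Poissonisation: replacing the deterministic $|\mathcal{S}|$ by an independent Poisson($|\mathcal{S}|$) number of sites makes the box-counts exactly independent Poissons, and a short de-Poissonisation estimate transfers this back to the fixed-$|\mathcal{S}|$ regime.
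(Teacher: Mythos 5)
Your proof is correct and follows essentially the same route as the paper's: an induction along the enumeration in which, thanks to the ancestor-before-descendant property (\ref{propertyofenumeration}), the conditional law of $M_{x_{m+1}}^{\mu,\mathcal{S}}$ given the earlier counts is exactly Binomial$\left(|\mathcal{S}|-\sum_{j=2}^m m_j,\mu\right)$, which converges to Poisson($\eta$) independently of the conditioning. Your balls-in-boxes framing, the explicit reduction to finite-dimensional convergence, and the Poissonisation remark are nice packaging, but the core argument coincides with the paper's.
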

\begin{proof}[Proof of Proposition \ref{isvmt}]
Fix a sequence of sets of sites $(\mathcal{S}_n)_{n\in\mathbb{N}}$  and a sequence of mutation rates $(\mu_n)_{n\in\mathbb{N}}$ with $\mu_n|\mathcal{S}_n|\rightarrow\eta$. Apply Skorokhod's Representation Theorem to Lemma \ref{cotnmn} to obtain random variables $(M_x^n)_{x\in\mathcal{T}\backslash\{\emptyset\},n\in\mathbb{N}}$ and $(M_x')_{x\in\mathcal{T}\backslash\{\emptyset\}}$ which satisfy
\begin{enumerate}
\item$(M_x^n)_{x\in\mathcal{T}\backslash\{\emptyset\}}\overset{d}{=}(M_x^{\mu_n,\mathcal{S}_n})_{x\in\mathcal{T}\backslash\{\emptyset\}}$ for each $n\in\mathbb{N}$;
\item$(M_x')_{x\in\mathcal{T}\backslash\{\emptyset\}}\overset{d}{=}(M_x)_{x\in\mathcal{T}\backslash\{\emptyset\}}$; and
\item$\lim_{n\rightarrow\infty}(M_x^n)_{x\in\mathcal{T}\backslash\{\emptyset\}}=(M'_x)_{x\in\mathcal{T}\backslash\{\emptyset\}}$ almost surely.
\end{enumerate}
Put the $M^n_x,M'_x$ on the same probability space as the $P_{x,t},P_x$ so that the $M^n_x,M'_x$ are independent of the $P_{x,t},P_x$.
Then
\ba
\sum_{i\in\mathcal{S}}\delta_{P_{\phi_i^{\mu},\sigma_n}}&=&\sum_{x\in\mathcal{T}\backslash\{\emptyset\}}M_x^{\mu_n,\mathcal{S}_n}\delta_{P_{x,\sigma_n}}\\
&\overset{d}{=}&\sum_{x\in\mathcal{T}\backslash\{\emptyset\}}M_x^n\delta_{P_{x,\sigma_n}}.
\ea
Let $I_1,..,I_k\subset(0,1]$ be closed intervals. Then
\bq\label{summandcon}
\left(\sum_{i\in\mathcal{S}}\delta_{P_{\phi_i^{\mu},\sigma_n}}(I_j)\right)_{j=1}^k\overset{d}{=}\left(\sum_{x\in\mathcal{T}\backslash\{\emptyset\}}M_x^n\delta_{P_{x,\sigma_n}}(I_j)\right)_{j=1}^k.
\eq
Lemma \ref{btst} says that $P_{x,\sigma_n}$ converges to $P_x$; and $P_x$ does not lie on the boundaries of the $I_j$ with probability one, so the summands of (\ref{summandcon}) converge pointwise. Meanwhile Lemma \ref{finitepset} says that the sum is over a finite subset of $\mathcal{T}\backslash\{\emptyset\}$.
\end{proof}
\begin{lemma}\label{esfsisa}
Let $a\in(0,1)$. As $n\rightarrow\infty$, $\mu\rightarrow0$, and $|\mathcal{S}|\mu\rightarrow\eta\in[0,\infty)$,
\[
\mathbb{E}\left[\sum_{i\in\mathcal{S}}\delta_{P_{\phi_i^n,\sigma_n}}(a,1)\right]\rightarrow2\eta(a^{-1}-1).
\]
\begin{proof}
First,
\ba
\liminf\mathbb{E}\left[\sum_{i\in\mathcal{S}}\delta_{P_{\phi_i^n,\sigma_n}}(a,1)\right]&\geq&\mathbb{E}\left[\lim\sum_{i\in\mathcal{S}}\delta_{P_{\phi_i^n,\sigma_n}}(a,1)\right]\\
&=&\mathbb{E}\left[\sum_{x\in\mathcal{T}\backslash\{\emptyset\}}M_x\delta_{P_x}(a,1)\right]\\
&=&2\eta(a^{-1}-1),
\ea
by Fatou's lemma, Proposition \ref{isvmt}, and Remark \ref{msflfl}. Second,
\ba
\mathbb{E}\left[\sum_{i\in\mathcal{S}}\delta_{P_{\phi_i^n,\sigma_n}}(a,1)\right]&\leq&\mathbb{E}\left[\sum_{i\in\mathcal{S}}\delta_{n^{-1}\hat{B}_i^{n,\mu}}(a,1)\right]\\
&\rightarrow&2\eta(a^{-1}-1),
\ea
by the inequality $P_{\phi_i^n,\sigma_n}\leq n^{-1}\hat{B}_i^{n,\mu}$ and then Proposition \ref{ssmfps}.
\end{proof}
\end{lemma}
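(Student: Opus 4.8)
The plan is to establish convergence of the mean by a squeeze, bounding the expectation above and below by quantities whose limits both equal $2\eta(a^{-1}-1)$. Note that the distributional convergence of Proposition \ref{isvmt} does not by itself deliver convergence of expectations, since there is no evident uniform integrability for the random measures applied to $(a,1)$; the two-sided estimate is what circumvents this.

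For the lower bound I would invoke Proposition \ref{isvmt}, which gives $\sum_{i\in\mathcal{S}}\delta_{P_{\phi_i^n,\sigma_n}}\rightarrow\sum_{x\in\mathcal{T}\backslash\{\emptyset\}}M_x\delta_{P_x}$ in the vague topology on $(0,1]$. Applying these measures to the fixed open interval $(a,1)$ and passing to a Skorokhod coupling that makes the convergence almost sure, Fatou's lemma yields $\liminf_n\mathbb{E}[\sum_{i\in\mathcal{S}}\delta_{P_{\phi_i^n,\sigma_n}}(a,1)]\geq\mathbb{E}[\sum_{x}M_x\delta_{P_x}(a,1)]$, and the right-hand side equals $2\eta(a^{-1}-1)$ by the mean computation recorded in Remark \ref{msflfl}. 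Here I would remark that each $P_x$, being a product of independent uniforms, has a continuous law, so almost surely no $P_x$ falls on the boundary points $a$ or $1$; this is what legitimises carrying the open interval through the limit (equivalently one may appeal directly to the portmanteau inequality for open sets).

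For the upper bound the key observation is the pointwise domination $P_{\phi_i^n,\sigma_n}\leq n^{-1}\hat{B}_i^{n,\mu}$, valid because the descendants of the first site-$i$ mutant cell $\phi_i^\mu$ all lie in $\hat{\mathcal{B}}_i^\mu\cap\mathcal{T}_{\sigma_n}$. Hence $\mathbb{E}[\sum_{i\in\mathcal{S}}\delta_{P_{\phi_i^n,\sigma_n}}(a,1)]\leq\mathbb{E}[\sum_{i\in\mathcal{S}}\delta_{n^{-1}\hat{B}_i^{n,\mu}}(a,1)]$, and by linearity of expectation together with exchangeability over sites this equals $|\mathcal{S}|\,\mathbb{P}[n^{-1}\hat{B}^{n,\mu}>a]=|\mathcal{S}|\mu\cdot\mu^{-1}\mathbb{P}[n^{-1}\hat{B}^{n,\mu}>a]$, which tends to $\eta\cdot2(a^{-1}-1)$ by Proposition \ref{ssmfps} and the hypothesis $|\mathcal{S}|\mu\rightarrow\eta$. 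Thus $\limsup_n\mathbb{E}[\cdots]\leq2\eta(a^{-1}-1)$, and combined with the lower bound the limit is pinned to $2\eta(a^{-1}-1)$.

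I expect the main obstacle to be the lower bound rather than the upper bound: one must convert vague convergence in distribution of random measures into a genuine inequality between expectations, which requires either the Skorokhod representation to secure an almost sure version before Fatou, or a careful portmanteau argument exploiting that $(a,1)$ is open with $P_x$-null boundary. The upper bound is comparatively routine once the containment $P_{\phi_i^n,\sigma_n}\leq n^{-1}\hat{B}_i^{n,\mu}$ is noted, since it reduces matters entirely to the single-site tail asymptotics already established in Proposition \ref{ssmfps}.
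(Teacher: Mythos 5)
Your proposal is correct and follows essentially the same route as the paper's proof: a lower bound via Proposition \ref{isvmt}, Fatou's lemma, and the mean computation of Remark \ref{msflfl}, and an upper bound via the domination $P_{\phi_i^\mu,\sigma_n}\leq n^{-1}\hat{B}_i^{n,\mu}$ together with Proposition \ref{ssmfps}. The extra care you take (Skorokhod coupling before Fatou, the null-boundary observation, and the exchangeability computation reducing the upper bound to the single-site tail) merely makes explicit what the paper leaves implicit.
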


\subsection{The infinite sites assumption approximation}\label{sec:isaia}
\begin{lemma}\label{isaifa}Let $a\in(0,1)$. As $n\rightarrow\infty$, $n\mu\rightarrow\theta\in[0,\infty)$, and $|\mathcal{S}|\mu\rightarrow\eta\in[0,\infty)$,
\[
\mathbb{E}\left|\sum_{i\in\mathcal{S}}\delta_{P_{\phi_i^n,\sigma_n}}(a,1)-\sum_{i\in\mathcal{S}}\delta_{n^{-1}B_i^{n,\mu}}(a,1)\right|\rightarrow0.
\]
\begin{proof}The inequalities
\[
n^{-1}B_i^{n,\mu}\leq n^{-1}\hat{B}_i^{n,\mu}\geq P_{\phi_i^\mu,\sigma_n},
\]
imply that
\[
\sum_{i\in\mathcal{S}}\delta_{n^{-1}B_i^{n,\mu}}(a,1)\leq\sum_{i\in\mathcal{S}}\delta_{n^{-1}\hat{B}_i^{n,\mu}}(a,1)\geq\sum_{i\in\mathcal{S}}\delta_{P_{\phi_i^\mu,\sigma_n}}(a,1).
\]
Hence
\ba
&&\mathbb{E}\left|\sum_{i\in\mathcal{S}}\delta_{P_{\phi_i^n,\sigma_n}}(a,1)-\sum_{i\in\mathcal{S}}\delta_{n^{-1}B_i^{n,\mu}}(a,1)\right|\\
&&\leq\mathbb{E}\left|\sum_{i\in\mathcal{S}}\delta_{n^{-1}\hat{B}_i^{n,\mu}}(a,1)-\sum_{i\in\mathcal{S}}\delta_{P_{\phi_i^n,\sigma_n}}(a,1)\right|\\&&\quad\quad+\mathbb{E}\left|\sum_{i\in\mathcal{S}}\delta_{n^{-1}\hat{B}_i^{n,\mu}}(a,1)-\sum_{i\in\mathcal{S}}\delta_{n^{-1}B_i^{n,\mu}}(a,1)\right|\\
&&=2\mathbb{E}\sum_{i\in\mathcal{S}}\delta_{n^{-1}\hat{B}_i^{n,\mu}}(a,1)-\mathbb{E}\sum_{i\in\mathcal{S}}\delta_{P_{\phi_i^n,\sigma_n}}(a,1)-\mathbb{E}\sum_{i\in\mathcal{S}}\delta_{n^{-1}B_i^{n,\mu}}(a,1),
\ea
which converges to zero thanks to Corollary \ref{lmfsfsm}, Proposition \ref{ssmfps}, and Lemma \ref{esfsisa}.
\end{proof}
\end{lemma}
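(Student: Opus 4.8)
The plan is to funnel both of the competing site frequency spectra through a single dominating object, namely the \emph{descendant} spectrum $\sum_{i\in\mathcal{S}}\delta_{n^{-1}\hat{B}_i^{n,\mu}}(a,1)$, and then to exploit the fact that all three mean spectra converge to the \emph{same} limit $2\eta(a^{-1}-1)$.

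First I would record two pointwise (in $i$ and in the sample point) inequalities. One is $n^{-1}B_i^{n,\mu}\le n^{-1}\hat{B}_i^{n,\mu}$, which is exactly (\ref{bbhatineq}). The other is $P_{\phi_i^\mu,\sigma_n}\le n^{-1}\hat{B}_i^{n,\mu}$, which holds because $\phi_i^\mu$ is itself mutated at site $i$, so its descendants alive at time $\sigma_n$ form a subset of $\hat{\mathcal{B}}_i^\mu\cap\mathcal{T}_{\sigma_n}$. Reading the measure $\delta_X(a,1)$ as the indicator $1_{\{X>a\}}$ (consistent with the way the mean limits are phrased, e.g.\ via $\mathbb{P}[n^{-1}\hat{B}_i^{n,\mu}>a]$ in Proposition \ref{ssmfps}), these inequalities lift term by term to the counting measures, so that
\[
\sum_{i}\delta_{n^{-1}B_i^{n,\mu}}(a,1)\le\sum_{i}\delta_{n^{-1}\hat{B}_i^{n,\mu}}(a,1)\ge\sum_{i}\delta_{P_{\phi_i^\mu,\sigma_n}}(a,1).
\]

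Writing $C_n$, $A_n$, $B_n$ for the $\hat{B}$, $P_\phi$, and $B$ counts respectively, I would then insert $C_n$ as an intermediary through the triangle inequality, $\mathbb{E}|A_n-B_n|\le\mathbb{E}|C_n-A_n|+\mathbb{E}|C_n-B_n|$, and use the domination just established to strip every absolute value: $\mathbb{E}|C_n-A_n|=\mathbb{E}C_n-\mathbb{E}A_n$ and $\mathbb{E}|C_n-B_n|=\mathbb{E}C_n-\mathbb{E}B_n$. The target is thus bounded above by $2\mathbb{E}C_n-\mathbb{E}A_n-\mathbb{E}B_n$. It remains only to feed in the three mean-spectrum limits, each equal to $2\eta(a^{-1}-1)$: $\mathbb{E}C_n\to2\eta(a^{-1}-1)$ by Proposition \ref{ssmfps} together with linearity of expectation and $|\mathcal{S}|\mu\to\eta$; $\mathbb{E}B_n\to2\eta(a^{-1}-1)$ by Corollary \ref{lmfsfsm}; and $\mathbb{E}A_n\to2\eta(a^{-1}-1)$ by Lemma \ref{esfsisa}. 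The upper bound then telescopes to $2\cdot2\eta(a^{-1}-1)-2\eta(a^{-1}-1)-2\eta(a^{-1}-1)=0$, and since the left-hand side is nonnegative the limit is $0$.

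The hard part is purely conceptual and lies entirely in the first step: one must recognise that $\hat{B}$ dominates \emph{both} spectra simultaneously. This is what rescues the argument, because $A_n$ and $B_n$ are not themselves ordered, so $\mathbb{E}|A_n-B_n|$ cannot be attacked directly; routing through the common upper bound $C_n$ converts an awkward $L^1$ distance into a difference of expectations whose three ingredients have already been computed. The only technical care needed is to keep the treatment of the endpoint of $(a,1)$ consistent across the three mean limits, which is automatic once $\delta_X(a,1)$ is interpreted as $1_{\{X>a\}}$.
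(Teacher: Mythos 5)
Your proposal is correct and follows essentially the same route as the paper's proof: domination of both spectra by the $\hat{B}$ spectrum, the triangle inequality through that common upper bound to strip the absolute values, and then the three mean limits from Corollary \ref{lmfsfsm}, Proposition \ref{ssmfps}, and Lemma \ref{esfsisa}. Your explicit justification of $P_{\phi_i^\mu,\sigma_n}\leq n^{-1}\hat{B}_i^{n,\mu}$ (descendants of $\phi_i^\mu$ alive at $\sigma_n$ form a subset of $\hat{\mathcal{B}}_i^\mu\cap\mathcal{T}_{\sigma_n}$) is a detail the paper leaves implicit, but the argument is the same.
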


\begin{proof}[Proof of Theorem \ref{dslfm}]
Let $I_1,..,I_k\subset(0,1]$ be closed intervals. Writing $||\cdot||$ for the $l_1$-norm on $\mathbb{R}^k$,
\ba
&&\mathbb{E}\left|\left|\left(\sum_{i\in\mathcal{S}}\delta_{n^{-1}B_i^{n,\mu}}(I_j)\right)_{j=1}^k-\left(\sum_{i\in\mathcal{S}}\delta_{P_{\phi_i^\mu,\sigma_n}}(I_j)\right)_{j=1}^k\right|\right|\\
&&=\sum_{j=1}^k\mathbb{E}\left|\sum_{i\in\mathcal{S}}\delta_{n^{-1}B_i^{n,\mu}}(I_j)-\sum_{i\in\mathcal{S}}\delta_{P_{\phi_i^\mu,\sigma_n}}(I_j)\right|\\
&&\rightarrow0,
\ea
due to Lemma \ref{isaifa}. Therefore
\[
\left(\sum_{i\in\mathcal{S}}\delta_{n^{-1}B_i^{n,\mu}}(I_j)\right)_{j=1}^k
\]
and 
\[
\left(\sum_{i\in\mathcal{S}}\delta_{P_{\phi_i^\mu,\sigma_n}}(I_j)\right)_{j=1}^k
\]
share the same limiting distribution, if it exists. This limiting distribution, by Proposition \ref{isvmt}, is
\[
\left(\sum_{x\in\mathcal{T}\backslash\{\emptyset\}}M_x\delta_{P_x}(I_j)\right)_{j=1}^k
\]
as required.
\end{proof}
\subsection{Mean and variance of the site frequency spectrum}\label{pfrsfs}
Finally, let's check Remarks \ref{msflfl} and \ref{vsflfl}. Note that
\[
-\log(P_x)=-\sum_{y\preceq x}\log(U_y)
\]
is a sum of i.i.d. mean-$1$ exponentially distributed random variables, which is just a gamma random variable with parameters $|x|$ and $1$. Then
\ba
\mathbb{P}[P_x>a]&=&\mathbb{P}[-\log(P_x)<-\log(a)]\\
&=&\int_0^{-\log(a)}\frac{s^{|x|-1}e^{-s}}{(|x|-1)!}ds.
\ea
So
\ba
\mathbb{E}\left[\sum_xM_x\delta_{P_x}(a,1)\right]&=&\eta\sum_x\mathbb{P}[P_x>a]\\
&=&\eta\sum_x\int_0^{-\log(a)}\frac{s^{|x|-1}e^{-s}}{(|x|-1)!}ds\\
&=&\eta\int_0^{-\log(a)}\sum_{l\in\mathbb{N}}2^l\frac{s^{l-1}e^{-s}}{(l-1)!}ds\\
&=&2\eta\int_0^{-\log(a)}e^sds\\
&=&2\eta(a^{-1}-1).
\ea
As for the variance of the site frequency spectrum,
\ba
\text{Var}\left[\sum_xM_x\delta_{P_x}(a,1)\right]&=&\text{Var}\left[\mathbb{E}\left[\sum_xM_x\delta_{P_x}(a,1)\big|(P_x)\right]\right]\\
&&+\mathbb{E}\left[\text{Var}\left[\sum_xM_x\delta_{P_x}(a,1)\big|(P_x)\right]\right]\\
&\geq&\mathbb{E}\left[\text{Var}\left[\sum_xM_x\delta_{P_x}(a,1)\big|(P_x)\right]\right]\\
&=&\mathbb{E}\left[\sum_x\delta_{P_x}(a,1)\text{Var}\left[M_x\right]\right]\\
&=&\eta\mathbb{E}\left[\sum_x\delta_{P_x}(a,1)\right].
\ea

\section{Infinite sites assumption violations}\label{infinitesitessection}The infinite sites assumption (ISA) is a popular modelling assumption, stating that each genetic site can mutate at most once during the population's evolution. There are influential and insightful analyses of tumour evolution which rely on the ISA, for example \cite{gs,ib,matds,tavarerecent}. However, recent statistical analysis of single cell sequencing data shows ``widespread violations of the ISA in human cancers" \cite{isr}. Thus it is unclear to what extent \cite{gs,ib,matds,tavarerecent}'s analyses can be trusted. Our studied model of DNA sequence evolution does not use the ISA and invites a theoretical assessment of the ISA's validity.

Let's check the prevalence of ISA violations. For simplicity, consider the most basic version of the model, which was introduced in Section \ref{sec:model}. Building upon notation of Section \ref{sec:Yuletreesec}, write
\[
\mathcal{T}_{(n)}=\{x\in\mathcal{T}:\exists y\in\mathcal{T}_{\sigma_n},x\prec y\}
\]
for the set of ancestors of those cells alive at time $\sigma_n$ (when the total number of cells reaches $n$). Write
\[
X_i^{n,\mu}=\left|\{(x,xj):x\in\mathcal{T}_{(n)},j\in\{0,1\},V_i^\mu(x)\not=V_i^\mu(xj)\}\right|
\]
for the number of times that site $i$ mutates up to time $\sigma_n$. Observe that $X_i^{n,\mu}$ is binomially distributed with parameters $2n-2$ and $\mu$. Site $i$ is said to violate the ISA if $X_i^{n,\mu}\geq2$, which occurs with probability
\ba
p(n,\mu)&=&\mathbb{P}[X_i^{n,\mu}\geq2]\\
&=&1-(1-\mu)^{2n-2}-(2n-2)\mu(1-\mu)^{2n-3}.
\ea
Then the number of sites to violate the ISA,
\[
\left|\{i\in\mathcal{S}:X_i^{n,\mu}\geq2\}\right|,
\]
is binomially distributed with parameters $|\mathcal{S}|$ and $p(n,\mu)$. If parameter values are indeed in the region of $n=10^9$ and $\mu=10^{-9}$, then the expected proportion of sites to violate the ISA is in the region of $0.5$. This means that the expected number of sites to violate the ISA may be in the billions. Even if very conservative parameter estimates were plugged in, the number of violations is still massive. In fact violations are even more common if one considers cell death. Suppose that cells divide at rate $\alpha$ and die at rate $\beta$. Then to go from a population of $1$ cell to $n$ cells requires approximately $n\alpha/(\alpha-\beta)$ cell divisions, where the factor $\alpha/(\alpha-\beta)$ may be as large as $100$ \cite{ib}. 

Depite the apparent prevalence of ISA violations, our results suggest that their impact on mutation frequencies is negligible at the scale of population fractions. Importantly, bulk sequencing data is only sensitive on the scale of population fractions. Our theoretical work stands in support of the data-driven works \cite{gs,ib,matds,tavarerecent,isr}.

Note however that our model only considers point mutations; it does not, for example, consider deletions of genomic regions, which are thought to be a significant cause of ISA violations \cite{isr}.

\section{Estimating mutation rates}\label{sec:data}
In this section we wish to give the reader a light flavour of mutation frequency data and its relationship to the model. We estimate mutation rates in a lung adenocarcinoma.
\subsection{Diploid perspective}
Before presenting data, an additional ingredient needs to be considered: ploidy. Normal human cells are diploid. That is, chromosomes come in pairs. Therefore a particular mutation may be present zero, one, or two times in a single cell. It should be said that the story is far more complex in tumours, with chromosomal instability and aneuploidy coming into play. Even so, many tumour samples display an average ploidy not so far from two (for example see Figure (1a) of \cite{scna}). We imagine an idealised diploid world.

To illustrate the diploid structure, label the genetic sites as
\[
\mathcal{S}= \{1,2\}\times\{1,..,L\},
\]
for some $L\in\mathbb{N}$. The first coordinate of a site $(i,j)\in\mathcal{S}$ states on which chromosome of a pair the site lies, and the second coordinate refers to the site's position on the chromosome. Mutations at sites $(1,j)$ and $(2,j)$ are typically not distinguished in data. In the original model set up, mutations were defined as differences to the initial cell's genome. Let's slightly improve that definition. Now a genome $v\in\mathcal{G}$ is said to be mutated at site $(i,j)\in\mathcal{S}$ if $v_{i,j}\not= r_j$, where $(r_j)_{j=1}^L$ is some reference. Then data is simplistically stated in the model's language as
\bq\label{mfmtd}
F_j=\frac{1}{2n}\sum_{i=1}^2B^{n,\mu}_{i,j}
\eq
for $j=1,..,L$. That is, the total number of mutations at position $j$ divided by the total number of chromosomes which contain position $j$.
\subsection{A lung adenocarcinoma}
The mutation frequency data of a lung adenocarcinoma was made available in \cite{fdata} (499017, Table S2). The data is plotted in Figures \ref{499017figure} and \ref{datatheoryfigure}. This is just one tumor to illustrate our results. A broader picture of data is seen in \cite{gs,ib}. They analysed hundreds of tumors. Around $1/3$ of the tumors were said to have a power-law distribution for mutation frequencies, resembling the one we consider.



\begin{figure}
\begin{center}
\includegraphics[scale=0.5]{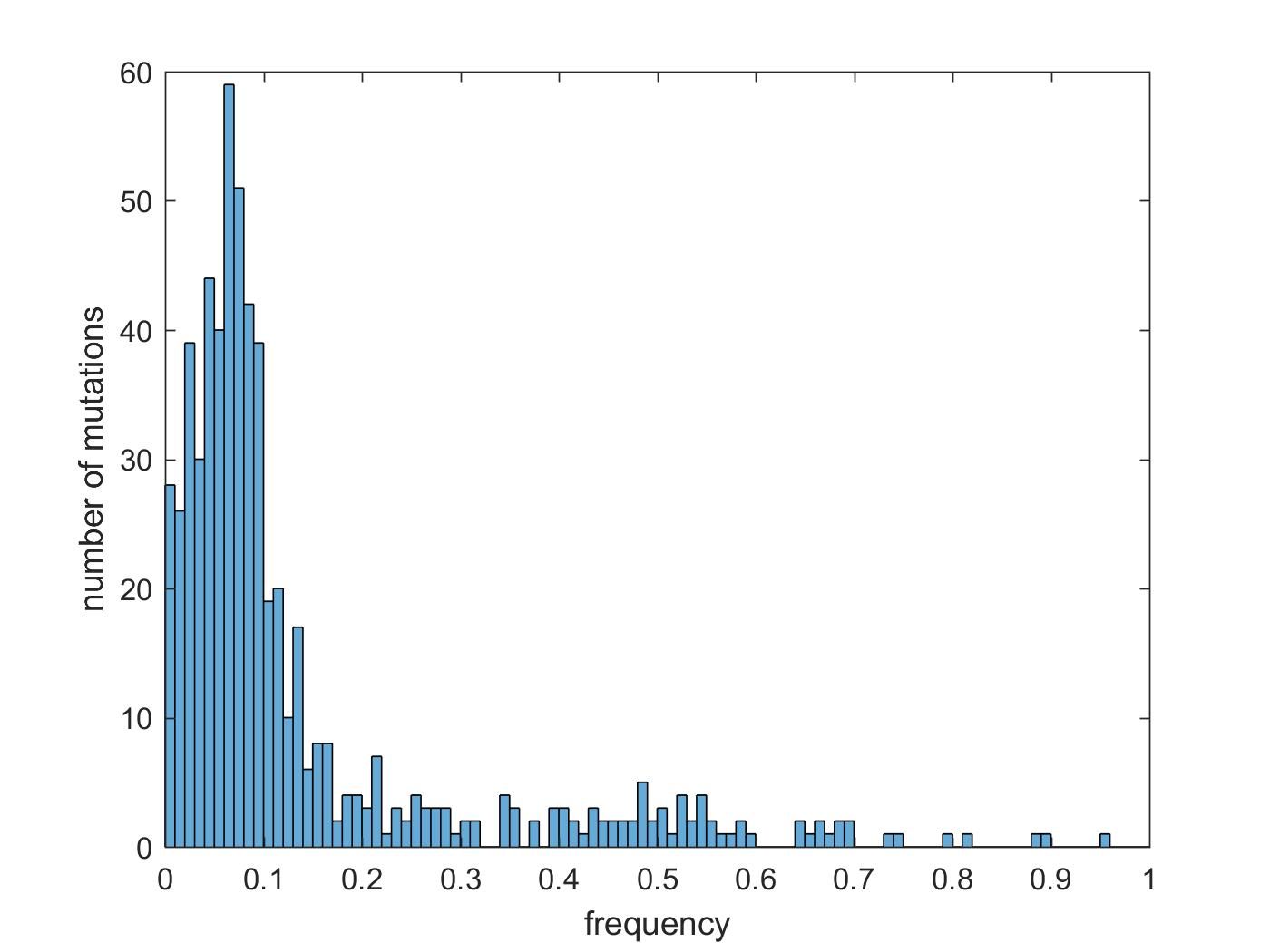}
\caption{A histogram of mutation frequencies from a lung adenocarcinoma.}\label{499017figure}

\includegraphics[scale=0.5]{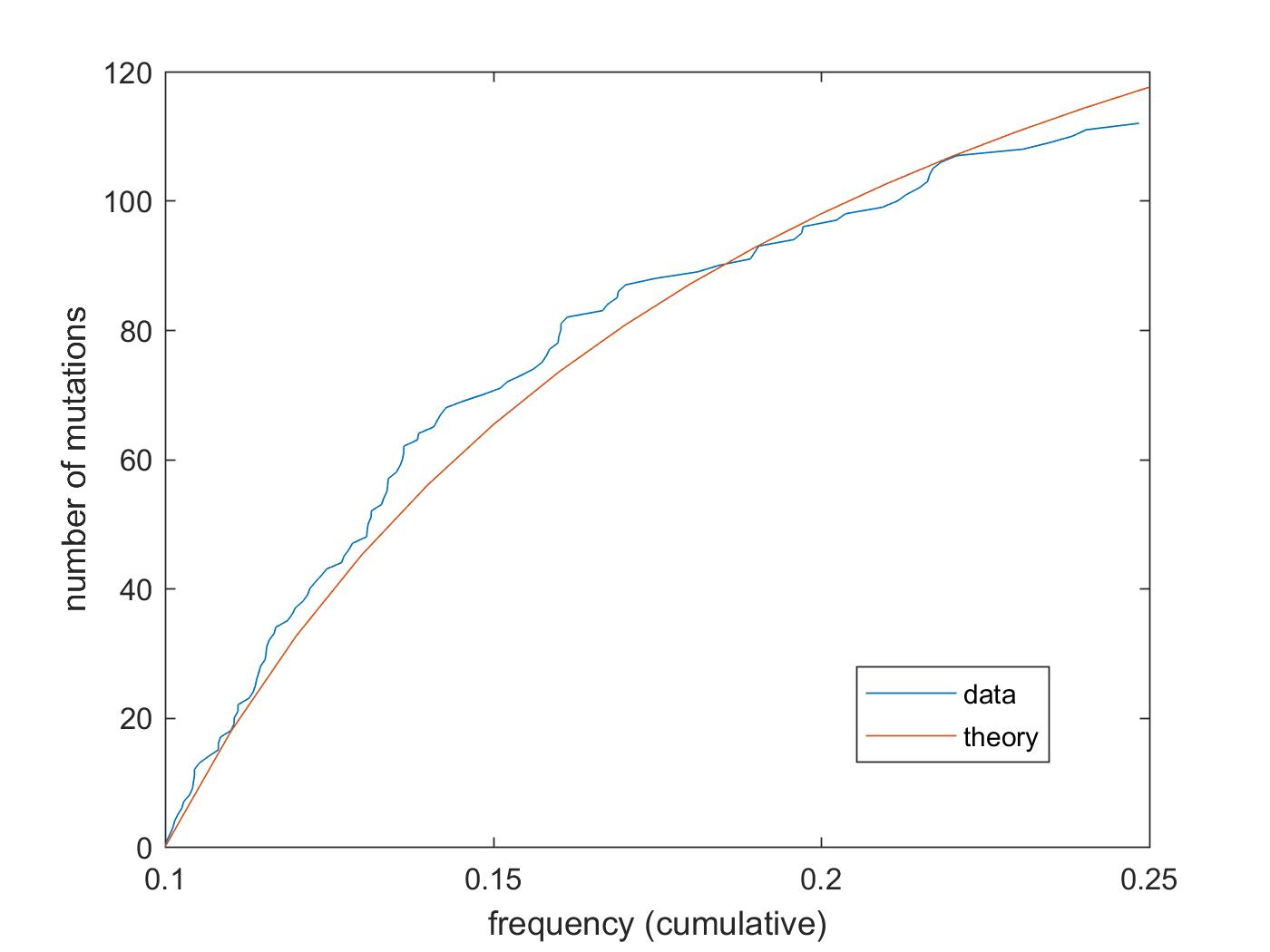}
\caption{The number of mutations (of the lung adenocarcinoma) whose frequency is in the interval $(0.1,x)$, for $x\in(0.1,0.25)$. }\label{datatheoryfigure}
\end{center}
\end{figure}

Our method to estimate mutation rates is, to a large extent, inspired by \cite{gs,ib}. Their attention is restricted to a subset of mutations. They ignore mutations at frequency less than $0.1$, saying that their detection is too unreliable. They ignore mutations above frequency 0.25, in order to neglect mutations present in the initial cell (which are relatively few). We do the same.

Write
\[
\mathcal{M}(a,b)=|\{j\in\{1,..,L\}:F_j\in(a,b)\}|
\]
for the number of mutations with frequency in $(a,b)\subset(0.1,0.25)$. Then, adapting Corollary \ref{lmfsfsm} to (\ref{mfmtd}), the expected number of mutations with frequency in $(a,b)$ is
\bq\label{enmfe}
\mathbb{E}\mathcal{M}(a,b)\approx \mu|\mathcal{S}|(a^{-1}-b^{-1}).
\eq
Under different models, \cite{gs,ib} derive the same approximation (\ref{enmfe}). They estimate the mutation rate $\mu$ by applying a linear regression to (\ref{enmfe}). We simplify matters even further. Our estimator for $\mu$ is
\bq\label{mresti}
\hat{\mu}=\frac{\mathcal{M}(0.1,0.25)}{6|\mathcal{S}|},
\eq
which (\ref{enmfe}) says is asymptotically unbiased. Now let's calculate $\hat{\mu}$ for the data example. The data shows mutations on the exome, which has rough size $|\mathcal{S}|=3\times10^8$ \cite{ib}. And the number of mutations in the specified frequency range is $\mathcal{M}(0.1,0.25)=112$. This gives
\[
\hat{\mu}=6.2\times10^{-8}.
\]
Next let's consider mutation rate heterogeneity. Write $\mu_\chi$ for the rate that nucleotide $\chi\in\mathcal{N}$ mutates. Partition the genetic sites:
\[
\mathcal{S}=\mathcal{S}_A\cup\mathcal{S}_C\cup\mathcal{S}_G\cup\mathcal{S}_T,
\]
where
\[
\mathcal{S}_\chi=\{i\in\mathcal{S}:u_i=\chi\}
\] is the set of sites which are represented by nucleotide $\chi$ in the initial cell. Just as before,
\[
\hat{\mu}_\chi=\frac{\mathcal{M}_\chi(0.1,0.25)}{6|\mathcal{S}_\chi|}
\]
is an unbiased estimator for $\mu_\chi$. The data gives

\[
(\hat{\mu}_A,\hat{\mu}_C,\hat{\mu}_G,\hat{\mu}_T)=(0.7,12.8,15.0,1.5)\times10^{-8}.
\]
This method could easily be extended to offer more detail, for example to estimate the rate at which nucleotide $A$ mutates to $C$ or to estimate mutation rates on different chromosomes.

The just presented statistical analysis is of course simple and brief. We recommend \cite{tavarerecent} for a far more comprehensive statistical analysis of mutation frequency data. However their infinite-sites framework does not consider mutation rate heterogeneity.

\begin{appendix}

\section*{Appendix}\label{cdca}
A heuristic `proof' of Conjecture \ref{lmfwsc} is given.

First we argue that, in the conjecture's limit, selection is unimportant. Write
\[
\mathcal{G}_\text{sel}=\{v\in\mathcal{G}:\exists i\in\mathcal{S}_\text{sel},v_i\not=u_i\}
\]
for the set of genomes which are mutated at a selective site. Write
\[
Q_\text{sel}^\mu(t)=\frac{\sum_{v\in\mathcal{G}_\text{sel}}X_v^\mu(t)}{\sum_{v\in\mathcal{G}}X_v^\mu(t)}
\]
for the proportion of cells at time $t\geq0$ whose genomes are mutated at a selective site. Then, according to Theorem \ref{mt},
\[
\left(Q_\text{sel}^\mu(\sigma_n^\mu)|\sigma_n^\mu<\infty\right)\rightarrow0
\]
in probability. Therefore we neglect selection.

Cells divide and die at rates $\alpha(u')$ and $\beta(u')$, which we now abbreviate to $\alpha$ and $\beta$. Some cells have an ultimately surviving lineage of descendants. Other cells eventually have no surviving descendants. Name these cells immortal and mortal respectively. In a supercritical birth-death branching process, it is well-known (eg. \cite{durr}) that the immortal cells grow as a Yule process and the mortal cells grow as a subcritical branching process. An immortal cell divides to produce two immortal cells at rate $\alpha-\beta$, or it divides to produce one immortal and one mortal cell at rate $2\beta$. A mortal cell divides at rate $\beta$ to produce two mortal cells, or it dies at rate $\alpha$. Because the process is conditioned to reach a large population size, let's assume that the initial cell is immortal.

The notation of Section \ref{sec:Yuletreesec}, $\mathcal{T}=\cup_{l=0}^\infty\{0,1\}^l$ and its partial ordering $\prec$, will be used to represent the immortal cells. Let $(A_x)_{x\in\mathcal{T}}$ be i.i.d. Exp($\alpha-\beta$) random variables, which represent the times for immortal cells to divide to produce two immortal cells. The immortal cells at time $t\geq0$ are
\[
\mathcal{T}_t=\left\{x\in\mathcal{T}:\sum_{y\prec x}A_y\leq t<\sum_{y\preceq x}A_y\right\}.
\]
The immortal descendants of $x\in\mathcal{T}$ are
\[
\mathcal{D}^I_x=\{y\in\mathcal{T}:x\preceq y\}.
\]
The number of immortal descendants of cell $x$ at time $t$ is
\[
D_{x,t}^I=|\mathcal{D}^I_x\cap\mathcal{T}_t|.
\] 
Let $\left((R_x(t))_{t\geq0}\right)_{x\in\mathcal{T}}$ be i.i.d. Poisson processes with rate $2\beta$. Write $R_{x,r}=\min\{t\geq0:R_x(t)=r\}$ for $r=1,..,R_x(A_x)$. Then the seeding times of mortal cells are
\[
S_{x,r}=\sum_{y\prec x}A_y+R_{x,r}.
\]
Each seeding event initiates a subpopulation of mortal cells; let $(Y_{x,r}(t))_{t\geq0}$ be i.i.d. birth-death branching processes with birth and death rates $\beta$ and $\alpha$. Then the number of mortal descendants of $x$ at time $t$ is
\[
D_{x,t}^M=\sum_{y\in\mathcal{D}_x}\sum_{r=1}^{R_y(A_y)}1_{\{t-S_{y,r}\geq0\}}Y_{y,r}\left(t-S_{y,r}\right).
\]
The number of descendants of $x$ at time $t$ is
\[
D_{x,t}=D_{x,t}^I+D_{x,t}^M.
\]
The next result shows the long-term proportion of a cell's descendants which are immortal. The result is a basic consequence of classic branching process theory \cite{an}, and was mentioned in its specific form by \cite{durr}.
\begin{manuallemma}{A.1}\label{cttbpr}There is $c\in(0,\infty)$ with
\[
\lim_{t\rightarrow\infty}\frac{D_{x,t}^I}{D_{x,t}}=c
\]
almost surely.
\end{manuallemma}
We use Lemma \ref{cttbpr} to see the number of descendants of a cell as a proportion of the total population.
\begin{manuallemma}{A.2}\label{wpocdfxwd}For $x\in\mathcal{T}\backslash\{\emptyset\}$,
\[
\lim_{t\rightarrow\infty}\frac{D_{x,t}}{D_{\emptyset,t}}=P_x
\]
almost surely, where the $P_x$ are as in Lemma \ref{btst}.
\begin{proof}
By Lemma \ref{btst} and Lemma \ref{cttbpr},
\[
\frac{D_{x,t}}{D_{\emptyset,t}}=\frac{D_{x,t}}{D_{x,t}^I}\frac{D_{x,t}^I}{D_{\emptyset,t}^I}\frac{D_{\emptyset,t}^I}{D_{\emptyset,t}}
\]
converges to the required limit.
\end{proof}
\end{manuallemma}
Let's look at mutations. In the proof of Theorem \ref{dslfm} it was shown that the number of new mutations to arise at a cell's birth is approximately Poisson. Here, with heterogeneous mutation rates, the number of new mutations to arise at a cell's birth is approximately Poisson with mean
\ba
\eta:=\sum_{j\in J}\sum_{\psi\in\mathcal{N}\backslash\{u(j)\}}\eta^{u(j),\psi}(j).
\ea
Each $x\in\mathcal{T}\backslash\{\emptyset\}$ witnesses $1+R_x(A_x)$ cell divisions, while $\emptyset$ witnesses $R_\emptyset(A_\emptyset)$ cell divisions (one less because there is not a cell division associated to the initiation of $\emptyset$). So the number of new mutations to arise at $x$ is
\ba
\begin{cases}\sum_{r=0}^{R_x(A_x)}M_{x,r},\quad&x\not=\emptyset;\\
\sum_{r=1}^{R_x(A_x)}M_{x,r},\quad&x=\emptyset;
\end{cases}
\ea
where the $M_{x,r}$ are i.i.d. Poisson random variables with mean $\eta$.  In the proof of Theorem \ref{dslfm} it was also shown that a mutation which arises in cell $x$ will have approximate frequency $P_x$. Here, thanks to Lemma \ref{wpocdfxwd}, the situation appears identical. It only remains to discuss mutations arising in mortal cells. Any subpopulation of cells which descended from a mortal cell must eventually die out. Hence mutations arising in mortal cells are negligible when compared to the infinite total population size.

\end{appendix}
\section*{Declarations of interest}None.
\section*{Acknowledgements}
We thank Michael Nicholson, Trevor Graham, and Marc Williams for inspiring discussions. We thank two anonymous referees for numerous helpful corrections and suggestions. David Cheek was supported by The Maxwell Institute Graduate School in Analysis and its Applications, a Centre for Doctoral Training funded by the UK Engineering and Physical Sciences Research Council (grant EP/L016508/01), the Scottish Funding Council, Heriot-Watt University and the University of Edinburgh.

\bibliographystyle{plain}

\bibliography{References} 
\end{document}